\newtheorem{lemma}{{Lemma}}[section]
\newtheorem{theorem}{{Theorem}}[section]
\newtheorem{remark}{{Remark}}[section]
\newtheorem{assumption}{{Assumption}}[section]
\title[A bilevel learning approach for optimal observation placement]{A bilevel learning approach for optimal observation placement in variational data assimilation}
\author{P. Castro$^\dag$, J.C. De los Reyes$^\dag$}
\address{$\dag$ Research Center on Mathematical Modeling (MODEMAT), Escuela Polit\'ecnica Nacional, Quito, Ecuador.}
\thanks{Paula Castro acknowledges partial support from the Master Program in Mathematical Optimization at Escuela Politécnica Nacional de Ecuador}
\begin{document}
\begin{abstract}
In this paper we propose a bilevel optimization approach for the placement of space and time observations in variational data assimilation problems. Within the framework of supervised learning, we consider a bilevel problem where the lower-level task is the variational reconstruction of the initial condition of a semilinear system, and the upper-level problem solves the optimal placement with help of a sparsity inducing function. Due to the pointwise nature of the observations, an optimality system with regular Borel measures is obtained as necessary optimality condition for the lower-level problem. The latter is then considered as constraint for the upper-level instance, yielding an optimization problem constrained by a multi-state system with measures. We demonstrate existence of Lagrange multipliers and derive a necessary optimality system characterizing the optimal solutions of the bilevel problem. The numerical solution is carried out also on two levels. The lower-level problem is solved using a standard BFGS method, while the upper-level one is solved by means of a projected BFGS algorithm, based on the estimation of $\epsilon$-active sets. Finally some numerical experiments are presented to illustrate the main features of our approach.
\end{abstract}
\maketitle

\section{Introduction}
Data assimilation (DA) problems deal with the reconstruction of the initial condition of a dynamical system based on observations of the state, previous estimates and the system model. They are widely used in applications like, e.g., numerical weather prediction. For these problems, the more accurate the initial condition, the better the quality of the forecast of the system state. There are several approaches to deal with data assimilation problems, such as, optimal interpolation, variational approaches or hybrid methods (see \cite{Kal} and the references therein).

Variational approaches focus on solving an optimal least squares problem, and can be broadly classified in two classes, depending on the type of observations considered. The first one, three-dimensional variational analysis ($3D-$VAR), considers observations in just one instant of time, while the second one, four-dimensional variational analysis ($4D-$VAR), takes into account observations distributed in a given period of time $[t_0, t_n]$.

Depending on the amount of observations, data assimilation methods may be more or less efficient in reconstructing a useful initial condition. In meteorology, for instance, data may be collected through meteorological stations, satellite images, radiosondes, among others. In practice, the installation and operational costs of such observation devices may be too high and it is important to locate them in an optimal way, meaning that as few as possible should be placed and the richest amount of information should be measured.

Several approaches have emerged in the last decades in order to optimally place sensors in different settings. The most classical one is optimal filtering, proposed in the 70's to cope with Gaussian linear problems \cite{balakrishnan1967state,bensoussan1971,Curtain1978}, and further developed in \cite{burns1994optimal,burnsraut2015,HintRaut} to deal with nonlinear systems. An alternative observability approach for the location of sensors in linear parabolic and hyperbolic equations was recently developed in \cite{privat2015optimal}.
Additionally, in \cite{OED,Al} an A-optimal experimental design approach for the location of sensors in systems governed by PDE's was developed, which was also applied to a thermo-mechanical data assimilation problem in \cite{herzogriedel2015}.

In this paper we tackle the optimal placement problem using a bilevel learning approach \cite{Haber2010,dlReSchoen2013,hollerkunisch2018}. In contrast to optimal experimental design strategies, our framework allows us to work with different quality measures and is not restricted to the A-, D- or E- optimal experimental design paradigms \cite{pukelsheim}. Moreover, differently from previous related contributions \cite{Haber2010,OED,Al}, we are able to analyze the resulting bilevel optimization problem in function spaces and get an insight into its complex mathematical structure. By considering measures in space and mollified Dirac measures in time, we treat the lower-level optimality system as a multi-state system of time-dependent PDE with measures. Due to the possible multiplicity of solutions, we consider an adapted penalty approach in order to proof existence of Lagrange multipliers and derive an optimality system for the bilevel learning problem solutions.

The numerical solution is carried out in two stages. For the upper-level problem we use a projected BFGS method, whose inverse Hessian approximation is iteratively built upon the estimation of $\epsilon-$active sets, while, for the lower-level problem, a standard BFGS algorithm is considered. To further enhance the sparsity of the solution vector, the linear penalization function is replaced by a concave one, with values between $0$ and $1$.

The proposed bilevel optimization framework for observation placement, both in space an time, in the context of variational data assimilation, as well as the rigorous mathematical analysis of the lower- and upper-level problems constitute the genuine contribution of this manuscript. This is further complemented by the design of a second-order numerical algorithm for the solution of the problem, whose performance is computationally verified.

The structure of the paper is as follows. In Section 2 we study the variational data assimilation problem, and discuss existence und uniqueness of the solution to the adjoint
equation with regular Borel measures on its right-hand side. Section 3 focuses on the existence of Lagrange multipliers and on the rigorous derivation of an optimality system for the bilevel optimal placement problem. In Section 4, we present a second-order solution algorithm for the problem and discuss convergence properties as well as numerical aspects. Finally, in the last section, several computational experiments are carried out to verify the main properties of the approach and the solution method.

\section{Variational Data Assimilation Problem}
In this paper, we focus on the $4D-$VAR approach to solve semilinear data assimilation problems. Specifically, we consider the problem:
\begin{multline}\label{eq:201}
  \displaystyle
  \min_{u}J(y,u)=\dfrac{1}{2}\displaystyle\int\limits_0^T\sum_{k,i} w_k\sigma_i \rho_i(t)[y(x_k,t)-z_o(x_k,t)]^2~ dt\\+ \frac{1}{2} \|u-u_b\|_{B^{-1}}^2+\frac{\vartheta}{2}\|\nabla (u-u_b)\|^2_{L^2(\Omega)}
\end{multline}
\begin{equation}\label{eq:202}
   \begin{array}{lrll}
  &\frac{\partial y}{\partial t}+ Ay +g(y)= &0 &\text{ in }Q=\Omega\times]0,T[\\
\text{subject to: }&y=&0&\text{ on }\Sigma=\Gamma\times]0,T[\\
  &y(x,0)=&u&\text{ in }\Omega,
  \end{array}
\end{equation}
where $z_o(x_k,\cdot)$ represent the state observations at the spatial point $x_k$, $u_b \in H^1(\Omega)$ and $B^{-1} \in\mathcal{L}(L^2(\Omega))$ are the background information and the background information error covariance operator, respectively. As usual for data assimilation problems, this covariance operator is built using a Bayesian approach and assuming Gaussian noise in the data. The general form of a $4D-$VAR problem includes also observation error covariance matrices, which, for the sake of readability, we set equal to the identity. We also introduce a location vector $w=(w_k)$, $k=1,\ldots,n_s$, whose components take the value one if the placement $x_k$ is chosen and zero otherwise. Moreover, to be able to choose not only the optimal locations but also optimal time subintervals, we consider a  vector $\sigma=(\sigma_i)$, $i=1,\ldots,n_T$, where each component takes the value one if the time subinterval $i$ is chosen and zero if not.

In \eqref{eq:202} $A$ stands for a linear elliptic second order differential operator of the form
\[
Ay(x)=- \displaystyle\sum_{i,j}^n D_i(a_{ij}(x)D_jy(x)),\quad \text{for }x\in\Omega,
\]
with regular coefficients $(a_{ij}(x))$, for $i,j=1,\dots,n$, satisfying the symmetry condition $a_{ij}(x)=a_{ji}(x)$ and the condition of uniform ellipticity. Additionally, the nonlinear term in equation \eqref{eq:202} is assumed to verify the following conditions:

\begin{assumption}\label{assum:001}
  \hfill
  \begin{enumerate}
    \item $g=g(x,t,y):Q\times \mathbb{R}\mapsto\mathbb{R}$ satisfies the Carath\'eodory conditions and is uniformly bounded at the origin, i.e., $|g(x,t,0)|\leq K$, for some $K>0$,
    \item $g$ is monotone increasing with respect to $y$ for almost every $(x,t)\in Q$,
    \item $g$ is twice continuously differentiable with respect to $y$ and
      \begin{equation*}
      |g_y(x,t,y)|+|g_{yy}(x,t,y)| \leq K,
    \end{equation*}
      for some $K>0$, for almost all $(x,t)\in Q$  and any $y \in \mathbb{R}$.
  \end{enumerate}
\end{assumption}

Furthermore, $\|u-u_b\|_{B^{-1}}^2:=\int\limits_{\Omega}(u-u_b)B^{-1}(u-u_b) ~dx$ and $\vartheta >0$ is a regularization parameter, which in practice can be as small as required. In \eqref{eq:201}, we also consider regular support functions $\rho_i(t)\in C^2(0,T)$, for each $i=1,\ldots,n_T$, that act as mollifiers in short time-interval observations (almost instantaneous).

\subsection{Existence of a solution for the DA problem}
Let $\Omega\subset\mathbb{R}^m$, $1< m\leq 3$, be a bounded domain of class $\mathcal C^2$, $Q:=\Omega\times(0,T)$ and $\Sigma:=\Gamma\times(0,T)$, with $T>0$ a fixed real number. The constraint of the data assimilation problem, given by \eqref{eq:202}, is well-posed in the required high-regularity spaces, as will be stated in the following theorem.
\begin{theorem}\label{teo:existence}
  If $u\in H_0^1(\Omega)$ and the nonlinear term verifies Assumption \ref{assum:001}, then equation \eqref{eq:202} has a unique solution $y\in H^{2,1}(Q)$.
\end{theorem}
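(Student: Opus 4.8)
The idea is to recast \eqref{eq:202} as a fixed-point problem for a linear parabolic equation and to invoke the $L^2$-maximal parabolic regularity of $\partial_t+A$. Assumption \ref{assum:001} implies that the Nemytskii operator $y\mapsto g(\cdot,\cdot,y)$ maps $L^2(Q)$ into itself, is globally Lipschitz there with constant $K$, maps $C([0,T];L^2(\Omega))$ into itself, and obeys the linear growth bound $|g(x,t,y)|\le K(1+|y|)$. Since $\Omega$ is of class $\mathcal C^2$ and the $a_{ij}$ are regular and uniformly elliptic, $-A$ with domain $H^2(\Omega)\cap H_0^1(\Omega)$ generates an analytic semigroup on $L^2(\Omega)$, and $H_0^1(\Omega)$ is precisely the trace space $\big(L^2(\Omega),\,H^2(\Omega)\cap H_0^1(\Omega)\big)_{1/2,2}$; hence, for any $f\in L^2(Q)$ and $u\in H_0^1(\Omega)$, the linear problem $\tilde y_t+A\tilde y=f$ in $Q$, $\tilde y=0$ on $\Sigma$, $\tilde y(\cdot,0)=u$, has a unique solution $\tilde y\in H^{2,1}(Q)$ with $\|\tilde y\|_{H^{2,1}(Q)}\le C\big(\|f\|_{L^2(Q)}+\|u\|_{H_0^1(\Omega)}\big)$.

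Define $\mathcal T\colon C([0,T];L^2(\Omega))\to C([0,T];L^2(\Omega))$ by letting $\mathcal Tv$ be the solution of the above linear problem with $f=-g(\cdot,\cdot,v)$; it is well defined by the previous step, and every fixed point of $\mathcal T$ solves \eqref{eq:202} and lies in $H^{2,1}(Q)$. To produce a fixed point I would argue first on a short interval $[0,T_0]$. For $v_1,v_2\in C([0,T_0];L^2(\Omega))$ the difference $w=\mathcal Tv_1-\mathcal Tv_2$ solves the linear equation with vanishing initial datum and right-hand side $-(g(\cdot,\cdot,v_1)-g(\cdot,\cdot,v_2))$; testing with $w$, using the uniform ellipticity of $A$ and the Lipschitz bound, and applying Gronwall's inequality yields
\begin{equation*}
\|w\|_{C([0,T_0];L^2(\Omega))}\ \le\ C\,K\,T_0^{1/2}\,e^{T_0/2}\,\|v_1-v_2\|_{C([0,T_0];L^2(\Omega))},
\end{equation*}
with $C$ independent of $T_0$ and of the data. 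Thus $\mathcal T$ is a contraction as soon as $T_0=T_0(K)$ is small enough, irrespective of the initial condition, and Banach's fixed-point theorem provides a solution $y\in H^{2,1}(\Omega\times(0,T_0))$ on $[0,T_0]$.

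Since $y(\cdot,T_0)\in H_0^1(\Omega)$ by the $H^{2,1}$ regularity just obtained, the same argument applies on $[T_0,2T_0]$ with initial datum $y(\cdot,T_0)$, and — because the Lipschitz constant of $g$ is global, so that the admissible step length $T_0$ does not shrink — after finitely many steps one covers $[0,T]$. Concatenating the pieces (their $L^2$-traces agree at the junctions and each piece has $y_t\in L^2$) produces a solution $y\in H^{2,1}(Q)$ of \eqref{eq:202}. For uniqueness, if $y_1,y_2$ are two solutions then $w:=y_1-y_2$ satisfies $w_t+Aw+g(\cdot,\cdot,y_1)-g(\cdot,\cdot,y_2)=0$, $w(\cdot,0)=0$, $w|_\Sigma=0$; testing with $w$ and exploiting the monotonicity of $g$ in $y$, which renders $\int_\Omega(g(\cdot,\cdot,y_1)-g(\cdot,\cdot,y_2))\,w\,dx\ge 0$, gives $\tfrac{d}{dt}\|w(t)\|_{L^2(\Omega)}^2\le 0$ and hence $w\equiv 0$; alternatively, uniqueness is already contained in the contraction property of $\mathcal T$ on each subinterval.

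The only genuinely delicate ingredient is the maximal-regularity estimate for the linear problem with non-homogeneous initial data, i.e., the identification of $H_0^1(\Omega)$ with the interpolation space $\big(L^2(\Omega),H^2(\Omega)\cap H_0^1(\Omega)\big)_{1/2,2}$; this is exactly what makes $u\in H_0^1(\Omega)$ the right hypothesis for an $H^{2,1}(Q)$ solution. Everything else — the mapping and Lipschitz properties of the Nemytskii operator, the Gronwall estimate, and the concatenation bookkeeping — is routine.
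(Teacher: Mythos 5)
Your argument is correct, but it follows a genuinely different route from the paper. The paper first invokes the monotone-operator theory of Lions to obtain existence and uniqueness of a weak solution $y\in W(0,T)$ of \eqref{eq:202} for $u\in L^2(\Omega)$, and then performs a single bootstrap step: since Assumption \ref{assum:001} gives $g(y)\in L^2(Q)$ and $u\in H_0^1(\Omega)$, the linear problem with right-hand side $-g(y)$ has a unique solution $\phi\in H^{2,1}(Q)$ by the Ladyzhenskaya--Solonnikov--Uraltseva estimate, and $\phi=y$ by uniqueness of the weak solution. You instead construct the $H^{2,1}(Q)$ solution directly by a Banach fixed-point argument in $C([0,T_0];L^2(\Omega))$, using maximal $L^2$-parabolic regularity with initial data in the trace space $H_0^1(\Omega)=\bigl(L^2(\Omega),H^2(\Omega)\cap H_0^1(\Omega)\bigr)_{1/2,2}$, a Gronwall-type contraction estimate exploiting the global Lipschitz bound $|g_y|\leq K$, and continuation over finitely many intervals of uniform length $T_0(K)$; uniqueness then follows either from the contraction or from the energy estimate with the monotonicity of $g$. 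The trade-off: your construction is self-contained, never needs the monotonicity for existence (only the global Lipschitz bound), and yields the $H^{2,1}$ regularity in one stroke, whereas the paper's argument is shorter because it delegates the nonlinear existence question entirely to the cited monotonicity framework (which would also tolerate non-Lipschitz monotone $g$) and only uses the linear regularity theory once, globally in time. One inessential slip: the Nemytskii operator need not map $C([0,T];L^2(\Omega))$ into itself under the Carath\'eodory hypotheses alone (continuity in $t$ is not assumed), but your scheme only requires that it map $L^2(Q)$ into $L^2(Q)$ Lipschitz-continuously, which does hold, so nothing in the proof is affected.
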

\begin{proof}
  From \cite[Chapter 2]{lions1969quelques} we know that under Assumption \ref{assum:001} and taking $u\in L^2(\Omega)$, the semilinear equation has a unique weak solution $y \in W(0,T)$. The desired regularity follows by applying a standard boot-strapping argument to the auxiliar linear equation:
\begin{equation}\label{eq:bootstap}
  \begin{aligned}
    \frac{\partial \phi}{\partial t}+ A\phi &=-g(y)&\text{ in }Q,\\
    \phi&=0&\text{ in }\Sigma,\\
    \phi(x,0)&=u&\text{ in }\Omega.
  \end{aligned}
\end{equation}
Thanks Assumption \ref{assum:001}, it holds that $g(y)\in L^2(Q)$. Moreover, using the higher regularity of the initial condition, $u\in H_0^{1}(\Omega)$, it follows that $\phi\in H^{2,1}(Q)$ is the unique solution to equation \eqref{eq:bootstap} \cite[Theorem 9.1]{ladyzhenskaia1968linear}. Additionally, it verifies that
\begin{equation}\label{eq:wf}
  \displaystyle
  -\int\limits^T_{0}\int\limits_{\Omega}\phi v_t dx dt + \int\limits^T_{0}\int\limits_{\Omega}a\nabla \phi.\nabla v +g(y) v dxdt=\int\limits_{\Omega}u v(0) dx - \int\limits_{\Omega} \phi(T)v(T)dx,
\end{equation}
for all $v\in W(0,T)$. Therefore, $\phi\in H^{2,1}(Q)$ solves equation \eqref{eq:202} as well. The result holds due to the uniqueness of the solution to \eqref{eq:202}.
\end{proof}

\begin{remark}\label{rem:001}
Using the estimate given in \cite[Theorem 9.1]{ladyzhenskaia1968linear} and taking into account that, thanks to Assumption \ref{assum:001}, $g(y)\in L^2(Q)$, the following relation holds:
\begin{equation}\label{eq3:003}
  \| y\|_{H^{2,1}(Q)}\leq c(1+\| u\|_{H_0^1(\Omega)}), \text{ for some constant }c>0.
\end{equation}
\end{remark}

We introduce the control-to-state mapping $S:H_0^1(\Omega)\rightarrow H^{2,1}(Q)$, $u\mapsto S(u)=y$, as the solution operator associated with the semilinear parabolic equation \eqref{eq:202}. In the next two results we verify continuity and differentiability properties of this mapping.

\begin{theorem}\label{teo:S_continuity}
The control-to-state mapping is sequentially weakly continuous from $H_0^1(\Omega)$ to $H^{2,1}(Q)$.
\end{theorem}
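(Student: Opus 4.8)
The plan is to combine the a priori estimate of Remark~\ref{rem:001} with a compactness argument for the nonlinear term, and to conclude via a subsequence--subsequence argument. Let $u_n\rightharpoonup u$ weakly in $H_0^1(\Omega)$ and set $y_n:=S(u_n)$. Since $\{u_n\}$ is then bounded in $H_0^1(\Omega)$, estimate \eqref{eq3:003} shows that $\{y_n\}$ is bounded in $H^{2,1}(Q)$; hence, along a subsequence (not relabelled), $y_n\rightharpoonup\tilde y$ weakly in $H^{2,1}(Q)$ for some $\tilde y\in H^{2,1}(Q)$.

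The key step is to upgrade this to a strong convergence suitable for the nonlinearity. Because $\Omega$ is a bounded $\mathcal C^2$ domain with $m\le 3$, the embedding $H^{2,1}(Q)\hookrightarrow L^2(Q)$ is compact (Aubin--Lions type), so $y_n\to\tilde y$ strongly in $L^2(Q)$. Using Assumption~\ref{assum:001}(3), which yields $|g(x,t,y_n)-g(x,t,\tilde y)|\le K\,|y_n-\tilde y|$ for a.e.\ $(x,t)\in Q$, we then obtain $g(y_n)\to g(\tilde y)$ strongly in $L^2(Q)$.

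Next I would pass to the limit in the equation. Since $y_n\in H^{2,1}(Q)$ the identity $\partial_t y_n+Ay_n+g(y_n)=0$ holds in $L^2(Q)$; here $\partial_t y_n\rightharpoonup\partial_t\tilde y$ and $Ay_n\rightharpoonup A\tilde y$ weakly in $L^2(Q)$ (the latter because $A$ maps $H^2(\Omega)$ continuously into $L^2(\Omega)$), while $g(y_n)\to g(\tilde y)$ by the previous step, so $\partial_t\tilde y+A\tilde y+g(\tilde y)=0$ in $L^2(Q)$. For the initial and boundary data, the trace maps $y\mapsto y(\cdot,0)$ and $y\mapsto y|_\Sigma$ are linear and continuous on $H^{2,1}(Q)$, hence weakly continuous; combined with $y_n(\cdot,0)=u_n\rightharpoonup u$ and $y_n|_\Sigma=0$, this gives $\tilde y(\cdot,0)=u$ and $\tilde y|_\Sigma=0$. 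Thus $\tilde y$ solves \eqref{eq:202} with datum $u$, and the uniqueness statement in Theorem~\ref{teo:existence} forces $\tilde y=S(u)$.

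Finally, since the weak limit $S(u)$ does not depend on the extracted subsequence, a standard argument shows that the whole sequence $y_n=S(u_n)$ converges weakly to $S(u)$ in $H^{2,1}(Q)$, which is the assertion. The main obstacle is the nonlinear term: everything hinges on compensating the lack of weak continuity of $g$ by the compact embedding $H^{2,1}(Q)\hookrightarrow L^2(Q)$ together with the global Lipschitz bound from Assumption~\ref{assum:001}; the remaining passages to the limit are routine.
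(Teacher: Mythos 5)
Your proof is correct and follows essentially the same route as the paper: the a priori bound \eqref{eq3:003}, weak compactness in $H^{2,1}(Q)$, a compact embedding to turn weak into strong convergence and hence handle the nonlinear term, and passage to the limit in the equation. The only cosmetic differences are that the paper uses the compact embedding into $L^\mu(Q)$, $\mu\le 10$, together with plain continuity of $g$, whereas you use $L^2(Q)$ and the Lipschitz bound from Assumption \ref{assum:001}; your explicit treatment of the initial/boundary traces and the final subsequence--subsequence step simply fills in details the paper leaves implicit.
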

\begin{proof}
Let $u_n\rightharpoonup u$ in $H_0^1(\Omega)$ and set $y_n:=S(u_n)$ and $y=S(u)$. Thanks to estimate \eqref{eq3:003}, $\{y_n\}$ is bounded in ${H^{2,1}(Q)}$. Consequently, due to the reflexivity of $H^{2,1}(Q)$, there exists a weakly convergent subsequence, denoted the same, and a limit point  $y\in H^{2,1}(Q)$ such that  $y_n\rightharpoonup y$ in $H^{2,1}(Q)$ as $n\to\infty$. Using the compact embedding $H^{2,1}(Q) \hookrightarrow \hookrightarrow L^\mu(Q), ~\mu \leq 10$ (see, e.g., \cite[p.13]{lions1983controle}), we get that  $y_n\to y$ in $L^\mu(Q)$ as $n\to\infty$. Thanks to the continuity of $g$, $g(y_n)\to g(y)$ strongly in $L^2(Q)$. Moreover, due to the linearity and continuity of $\partial_t + A : H^{2,1}(Q) \to L^2(Q)$, we obtain that
\begin{equation*}
  \partial_t y + A y + g(y) = \lim_{n \to \infty} \partial_t y_n + A y_n + g(y_n)=0,
\end{equation*}
and the result follows.
\end{proof}

\begin{theorem}\label{teo:S_diff}
  The control-to-state mapping $S$ is G\^ateaux differentiable. Its derivative, in direction $h \in H_0^1(\Omega)$, is given by $S'(u)h=\eta$, where $\eta \in H^{2,1}(Q)$ corresponds to the unique solution of:
  \begin{equation}\label{eq:S_diff}
   \left\{
     \begin{array}{rll}
    \frac{\partial \eta}{\partial t}+ A\eta + g'(y)\eta= &0 &\text{ in }Q\\
    \eta=&0&\text{ on }\Sigma\\
    \eta(x,0)=&h&\text{ in }\Omega,
    \end{array}
    \right.
  \end{equation}
\end{theorem}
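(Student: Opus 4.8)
The plan is to study the difference quotient $\delta_t := t^{-1}\bigl(S(u+th)-S(u)\bigr)$ for $t>0$ small, show it is bounded in $H^{2,1}(Q)$ uniformly in $t$, extract a weak limit, identify it as the solution $\eta$ of the linearized system \eqref{eq:S_diff}, and conclude by uniqueness that the whole family converges, which yields G\^ateaux differentiability with $S'(u)h=\eta$.

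First I would record a Lipschitz-type estimate for $S$. Writing $y_t:=S(u+th)$ and $y:=S(u)$, I subtract the two copies of \eqref{eq:202} and test the equation for $y_t-y$ with $y_t-y$ itself; by the monotonicity of $g$ (Assumption \ref{assum:001}(2)) the nonlinear contribution is nonnegative, so a Gronwall argument gives $\|y_t-y\|_{W(0,T)}\le C\,t\,\|h\|_{L^2(\Omega)}$. Feeding $g(y_t)-g(y)\in L^2(Q)$, bounded by $K\|y_t-y\|_{L^2(Q)}$ via Assumption \ref{assum:001}(3), together with the regular initial datum $th\in H^1_0(\Omega)$ into the linear estimate of \cite[Theorem 9.1]{ladyzhenskaia1968linear} — exactly as in the bootstrap of Theorem \ref{teo:existence} — upgrades this to $\|\delta_t\|_{H^{2,1}(Q)}\le C\|h\|_{H^1_0(\Omega)}$, with $C$ independent of $t$. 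In particular $y_t\to y$ in $L^2(Q)$, and hence a.e.\ along a subsequence, as $t\to0^+$.

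Next I would pass to the limit. The difference quotient solves, in weak form, $\partial_t\delta_t+A\delta_t+t^{-1}\bigl(g(y_t)-g(y)\bigr)=0$ with $\delta_t(0)=h$, and by the pointwise mean value theorem $t^{-1}\bigl(g(y_t)-g(y)\bigr)=g'(\xi_t)\,\delta_t$ with $\xi_t=y+\theta_t(y_t-y)$, $\theta_t\in[0,1]$. Using the uniform bound, along a subsequence $\delta_{t_n}\rightharpoonup\eta$ in $H^{2,1}(Q)$, and by the compact embedding $H^{2,1}(Q)\hookrightarrow\hookrightarrow L^2(Q)$ we get $\delta_{t_n}\to\eta$ strongly in $L^2(Q)$. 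Since $|\xi_{t_n}-y|\le|y_{t_n}-y|\to0$ a.e.\ and $g'$ is continuous and bounded by $K$, we have $g'(\xi_{t_n})\to g'(y)$ a.e.\ with $|g'(\xi_{t_n})|\le K$; combined with the strong $L^2$-convergence of $\delta_{t_n}$, a standard product lemma gives $g'(\xi_{t_n})\delta_{t_n}\to g'(y)\eta$ in $L^2(Q)$. Passing to the limit in the weak formulation, using the linearity and continuity of $\partial_t+A$ on $H^{2,1}(Q)$, shows $\eta$ is a weak solution of \eqref{eq:S_diff}; since $g'(y)\in L^\infty(Q)$ (Assumption \ref{assum:001}(3)) and $h\in H^1_0(\Omega)$, \cite[Theorem 9.1]{ladyzhenskaia1968linear} gives $\eta\in H^{2,1}(Q)$ together with uniqueness for \eqref{eq:S_diff}. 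Uniqueness forces the whole family $\delta_t$, not merely a subsequence, to converge to $\eta$, which is precisely G\^ateaux differentiability of $S$ at $u$ in the direction $h$; linearity of $h\mapsto\eta$ (from linearity of \eqref{eq:S_diff}) and the bound above show in addition that $S'(u)\in\mathcal L\bigl(H^1_0(\Omega),H^{2,1}(Q)\bigr)$.

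The step I expect to be the main obstacle is the treatment of the nonlinear term in the limit: one needs both the strong (not merely weak) convergence of the difference quotients — which is exactly what the compact embedding $H^{2,1}(Q)\hookrightarrow\hookrightarrow L^\mu(Q)$ provides — and the a.e.\ convergence $\xi_t\to y$ together with the uniform $L^\infty$-bound on $g'$, in order to justify $g'(\xi_t)\delta_t\to g'(y)\eta$. Establishing the $t$-uniform $H^{2,1}$-bound on $\delta_t$ through the bootstrap is the other delicate point, but it is a direct adaptation of the argument already used in Theorem \ref{teo:existence}.
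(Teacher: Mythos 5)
Your proposal follows essentially the same route as the paper's proof: difference quotients, a $t$-uniform $H^{2,1}(Q)$ bound obtained from monotonicity/Lipschitz continuity of $g$ plus the bootstrap of Theorem \ref{teo:existence}, extraction of a weak limit, identification of that limit with the solution of \eqref{eq:S_diff}, and uniqueness to conclude convergence of the whole family. The only substantive difference is how the nonlinearity is passed to the limit: you use the pointwise mean value theorem, a.e.\ convergence of $\xi_t$ and the uniform bound $|g'|\le K$ with a dominated-convergence product argument, whereas the paper subtracts \eqref{eq:diff} and \eqref{eq:S_diff} and invokes G\^ateaux differentiability of the Nemytskii operator $g:L^2(Q)\to L^2(Q)$; both are legitimate. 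One small refinement you should add: as written, your argument delivers only weak convergence of $\delta_t$ to $\eta$ in $H^{2,1}(Q)$ (strong in $L^2(Q)$), while the theorem, as the paper proves it, asserts convergence in the $H^{2,1}(Q)$ norm. Since $\delta_t-\eta$ solves the linear equation with zero initial datum and right-hand side $-(g'(\xi_t)\delta_t-g'(y)\eta)\to 0$ in $L^2(Q)$, one further application of the maximal-regularity estimate of \cite[Theorem 9.1]{ladyzhenskaia1968linear} — which you already use for the uniform bound — gives $\|\delta_t-\eta\|_{H^{2,1}(Q)}\to 0$ and closes this gap with ingredients already in your proposal.
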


\begin{proof}
Let us consider the increment $y_\tau=S(u+\tau h)$, with $\tau>0$ and $h\in H_0^1(\Omega)$, and   $y=S(u)$. Substracting the corresponding equations, we get
  \begin{equation}\label{eq:diff}
    \begin{aligned}
      \frac{\partial}{\partial t} \left(\frac{y_\tau - y}{\tau}\right)+ A\left(\frac{y_\tau - y}{\tau}\right) + \frac{g(y_\tau)-g(y)}{\tau}&=0&\text{in }Q,\\
      \frac{y_\tau - y}{\tau}&=0 &\text{in }\Sigma\\
      \frac{y_\tau - y}{\tau}(x,0)&=h &\text{in }\Omega.
    \end{aligned}
  \end{equation}
Due to the monotonicity and Lipschitz continuity of $g$, $\{\frac{y_\tau - y}{\tau}\} $ solution of equation \eqref{eq:diff} is bounded in $W(0,T)$. Applying a bootstrapping argument further implies that $\{\frac{y_\tau - y}{\tau}\}$ is bounded in $H^{2,1}(Q)$. Therefore, there exists a subsequence, denoted the same, that is weakly convergent to some $\hat{\eta}\in H^{2,1}(Q)$. Equation \eqref{eq:S_diff} is well-posed in $H^{2,1}(Q)$, since, thanks to Assumption \ref{assum:001}, $g'(y)$ is bounded by a uniform constant. Substracting equations \eqref{eq:diff} and  \eqref{eq:S_diff}, and denoting $\hat{y}:=\frac{y_\tau - y}{\tau}-\eta$, we get
\begin{equation}\label{eq:diff_1}
  \begin{aligned}
    \frac{\partial \hat{y}}{\partial t}+ A\hat{y} + \frac{1}{\tau}\left(g(y_\tau)-g(y)-\tau g'(y)\eta\right)&=0&\text{in }Q,\\ \hat{y} &=0 &\text{in }\Sigma,\\ \hat{y}(x,0) &=0 &\text{in }\Omega,
  \end{aligned}
\end{equation}
from where, thanks to the G\^ateaux differentiability of the nonlinear term $g$ from $L^2(Q)$ to $L^2(Q)$, it follows that $\|\frac{y_\tau - y}{\tau}-\eta\|_{H^{2,1}(Q)}\to 0$ as $\tau\to0$. Consequently, from the uniqueness of the limit it holds that $\hat{\eta}=\eta$.
\end{proof}

Note that thanks to the compact embedding $H^{2,1}(Q)\hookrightarrow L^2(0,T;C(\bar{\Omega}))$ \cite[p.31]{casas2013parabolic}, the continuity of the state in the spatial variable holds, and the pointwise evaluations of $y$ in the spatial component are well--defined. Next we prove existence of an optimal solution to the data assimilation problem.

\begin{theorem}\label{teo:201}
Suppose that Assumption \ref{assum:001} holds. The data assimilation problem \eqref{eq:201} has at least an optimal solution $\bar{u}\in H_0^1(\Omega)$ with $\bar{y}=S(\bar{u})\in H^{2,1}(Q)$ its related optimal state.
\end{theorem}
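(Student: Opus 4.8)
The plan is to use the direct method of the calculus of variations. Since every term in \eqref{eq:201} is nonnegative, $\mu:=\inf J$ is finite, and we may pick a minimizing sequence $\{u_n\}\subset H_0^1(\Omega)$, with associated states $y_n:=S(u_n)\in H^{2,1}(Q)$. The first step is to derive coercivity of $J$ in $H_0^1(\Omega)$. For $n$ large enough $J(y_n,u_n)\leq \mu+1$, so in particular $\tfrac{\vartheta}{2}\|\nabla(u_n-u_b)\|_{L^2(\Omega)}^2\leq \mu+1$; since $u_b\in H^1(\Omega)$ is fixed, the triangle inequality bounds $\|\nabla u_n\|_{L^2(\Omega)}$, and the Poincar\'e inequality (recall $u_n\in H_0^1(\Omega)$) then gives a uniform bound on $\|u_n\|_{H_0^1(\Omega)}$. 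By reflexivity of $H_0^1(\Omega)$ there is a subsequence, not relabeled, with $u_n\rightharpoonup\bar u$ in $H_0^1(\Omega)$.

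Next I would pass to the limit in the state equation and in $J$. By Theorem~\ref{teo:S_continuity} the control-to-state map $S$ is sequentially weakly continuous, so $y_n\rightharpoonup\bar y:=S(\bar u)$ in $H^{2,1}(Q)$, and $\bar y$ solves \eqref{eq:202}. For the cost, the compact embedding $H_0^1(\Omega)\hookrightarrow\hookrightarrow L^2(\Omega)$ yields $u_n\to\bar u$ strongly in $L^2(\Omega)$, hence convergence of the background term $\tfrac12\|u_n-u_b\|_{B^{-1}}^2$ by continuity of this quadratic form on $L^2(\Omega)$ (recall $B^{-1}\in\mathcal L(L^2(\Omega))$); the convex continuous map $v\mapsto\|\nabla(v-u_b)\|_{L^2(\Omega)}^2$ is weakly lower semicontinuous on $H^1(\Omega)$, giving the $\liminf$-inequality for the regularization term. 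For the observation misfit, the compact embedding $H^{2,1}(Q)\hookrightarrow\hookrightarrow L^2(0,T;C(\bar\Omega))$ gives $y_n\to\bar y$ strongly in $L^2(0,T;C(\bar\Omega))$, so $y_n(x_k,\cdot)\to\bar y(x_k,\cdot)$ in $L^2(0,T)$ for each $x_k$; since the coefficients $w_k,\sigma_i$ are fixed and the mollifiers $\rho_i\in C^2(0,T)$ are bounded, the misfit term converges. Collecting these, $J(\bar y,\bar u)\leq\liminf_n J(y_n,u_n)=\mu$, so $\bar u$ is optimal with optimal state $\bar y=S(\bar u)\in H^{2,1}(Q)$.

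I expect the delicate point to be the passage to the limit in the pointwise-in-space observation term, which is nonconvex in $u$ and only well-defined thanks to the embedding $H^{2,1}(Q)\hookrightarrow L^2(0,T;C(\bar\Omega))$ recalled before the statement; it is the \emph{compactness} of this embedding, i.e.\ strong convergence of $y_n$ in $L^2(0,T;C(\bar\Omega))$ rather than mere weak convergence in $H^{2,1}(Q)$, that makes the argument work. The remaining ingredients --- coercivity via the gradient regularization, weak sequential compactness, weak continuity of $S$ from Theorem~\ref{teo:S_continuity}, and lower semicontinuity of the convex quadratic penalties --- are standard.
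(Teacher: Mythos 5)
Your proposal is correct and follows essentially the same route as the paper: a minimizing sequence, coercivity from the $H^1$-regularization to extract $u_n\rightharpoonup\bar u$, weak convergence of the states via the boundedness/weak-continuity results already established (Theorem~\ref{teo:S_continuity} and estimate \eqref{eq3:003}), and lower semicontinuity of $J$ to conclude. The only difference is that you spell out the weak lower semicontinuity term by term (in particular handling the pointwise observation term through the compact embedding $H^{2,1}(Q)\hookrightarrow\hookrightarrow L^2(0,T;C(\bar\Omega))$), which the paper simply asserts.
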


\begin{proof}
Let $\{ (y_n,u_n) \}_{n\geq 1} \subset H^{2,1}(Q) \times H_0^1(\Omega)$ be a minimizing sequence, with $y_n=S(u_n)$. Since the cost functional $J$ satisfies
\begin{equation}\label{eq:101_1}
J(y(u),u)\geq \frac{1}{2} \displaystyle
 \|u-u_b\|_{B^{-1}}^2+\frac{\vartheta}{2}\|\nabla (u-u_b)\|_{L^2(\Omega)}^2\geq c_B\|u-u_b\|^2_{H^1(\Omega)},\quad c_B>0,
 \end{equation}
the sequence $\{ u_n \}$ is bounded in $H_0^1(\Omega)$ and there exists a subsequence, denoted the same, and a limit $\bar u\in H_0^1(\Omega)$ such that $u_n \rightharpoonup \bar u$ in $H_0^1(\Omega)$ as $n\to\infty$. Moreover, thanks to estimate \eqref{eq3:003}, $\{y_n\}$ is also bounded in $H^{2,1}(Q)$ and, up to a subsequence, $y_n \rightharpoonup \bar y$ weakly in $H^{2,1}(Q)$. Proceeding as in the proof of Theorem \ref{teo:S_continuity} we obtain that $g(y_n) \to g(\bar y)$ in $L^2(Q)$. By passing to the limit in equation \eqref{eq:202} we obtain that $\bar{y}$ is the solution to the semilinear equation corresponding to the initial condition $\bar{u}$.
Since $J$ is weakly lower semicontinuous,
  \[
  \displaystyle \inf_{u\in H_0^1(\Omega)} J(y(u),u)= \lim\inf_{n\to\infty} J(u_n) \geq J(\bar{y},\bar{u})
  \]
  and the result follows.
\end{proof}
Considering the control-to-state mapping, we can rewrite the cost functional of the data assimilation problem in reduced form as
\begin{multline}\label{eq:204}
  \min_u f(u) = \dfrac{1}{2}\displaystyle \int\limits_0^T\int\limits_\Omega\sum_{k,i}w_k\sigma_i\rho_i(t)[S(u)(x,t)-z_o(x,t)]^2\delta(x-x_k) dxdt \\ + \frac{1}{2} \|u-u_b\|_{B^{-1}}^2+\frac{\vartheta}{2}\|\nabla (u-u_b)\|_{L^2(\Omega)}^2
    \end{multline}

\subsection{Adjoint equation}
As a preparatory step for the derivation of an optimality system for the variational DA problem, we study the following adjoint equation:
\begin{equation}\label{eq:aux}
\begin{array}{rll}
\displaystyle
-\frac{\partial p}{\partial t}+A^*p +g'(y) p =&  \displaystyle\mu &\text{ in }Q\\
p=&0&\text{ on }\Sigma\\
p(x,T)=&0&\text{ in }\Omega,
\end{array}
\end{equation}
where $\mu \in L^2(0,T;\mathcal M(\Omega)).$ Equation \eqref{eq:aux} has to be understood in a very weak form, i.e., for all $\zeta\in H^{2,1}(Q)$,
\[
\displaystyle \iint\limits_Q\left(\frac{\partial\zeta}{\partial t}+A\zeta+g'(y)\zeta\right)p~dxdt+
\int\limits_{\Omega}p(0) \zeta(0)~dx=\int\limits_0^T\langle \mu(t),\zeta(t) \rangle_{\mathcal{M}(\Omega),C(\bar{\Omega})} dt.
\]

\begin{theorem}\label{lemm:adjoint}
  If $\mu\in L^2(0,T;W^{-1,r}(\Omega))$, then problem \eqref{eq:aux} has a unique solution $p\in L^2(0,T;W_0^{1,r}(\Omega))$, with $r\in[1,\frac{m}{m-1}[$. Moreover, there exists a constant $c_p>0$ such that
  \begin{equation}\label{casas:2}
    \| p \|_{L^2(0,T;W_0^{1,r}(\Omega))}\leq c_p \| \mu\|_{L^2(0,T;W^{-1,r}(\Omega))}.
  \end{equation}
\end{theorem}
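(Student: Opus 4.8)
The plan is to convert the backward equation \eqref{eq:aux} into a forward parabolic problem with data in a negative Sobolev space, dispose of the zeroth-order term $g'(y)\,\cdot$ as a bounded perturbation, and then appeal to parabolic regularity theory for equations with measure-type right-hand sides. First I would reverse time: putting $\tau:=T-t$ and $q(\cdot,\tau):=p(\cdot,T-\tau)$, $\nu(\cdot,\tau):=\mu(T-\tau)$, $c(\cdot,\tau):=g'(y(\cdot,T-\tau))$, equation \eqref{eq:aux} becomes the forward problem $\partial_\tau q+A^*q+c\,q=\nu$ in $Q$, $q=0$ on $\Sigma$, $q(\cdot,0)=0$, where $c\in L^\infty(Q)$ and $c\ge 0$ by Assumption \ref{assum:001}, and $\nu\in L^2(0,T;W^{-1,r}(\Omega))$. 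It then suffices to construct a unique $q\in L^2(0,T;W_0^{1,r}(\Omega))$ solving this problem in the transposed sense, together with the estimate $\|q\|_{L^2(0,T;W_0^{1,r})}\le c_p\|\nu\|_{L^2(0,T;W^{-1,r})}$, which is \eqref{casas:2}.

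For the principal part ($c\equiv 0$) I would invoke maximal parabolic $L^2$-regularity on the scale $W^{-1,r}(\Omega)$: since $\Omega$ is of class $\mathcal C^2$ and the $a_{ij}$ are regular and uniformly elliptic, the Dirichlet realization of $A^*$ on $W^{-1,r}(\Omega)$ with domain $W_0^{1,r}(\Omega)$ is, after a shift, an isomorphism onto $W^{-1,r}(\Omega)$ and generates a bounded analytic semigroup on this reflexive (UMD) space for $r\in(1,\infty)$; the Dore--Venni/Weis theorem then yields, for $\nu\in L^2(0,T;W^{-1,r}(\Omega))$ and zero initial datum, a unique $q\in L^2(0,T;W_0^{1,r}(\Omega))\cap H^1(0,T;W^{-1,r}(\Omega))$ with $\|q\|_{L^2(0,T;W_0^{1,r})}+\|\partial_\tau q\|_{L^2(0,T;W^{-1,r})}\le c_0\|\nu\|_{L^2(0,T;W^{-1,r})}$. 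Equivalently, this linear parabolic regularity statement (including the endpoint $r=1$, for right-hand side in $L^2(0,T;\mathcal M(\Omega))$) can be quoted from \cite{casas2013parabolic} and the references therein.

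To reinsert $c\,q$ I would run a contraction argument on a short interval $[0,\tau_1]$: the map $q\mapsto\tilde q$, where $\tilde q$ solves $\partial_\tau\tilde q+A^*\tilde q=\nu-c\,q$ with $\tilde q(0)=0$, is a contraction on $L^2(0,\tau_1;L^r(\Omega))$, because $\tilde q(0)=0$ gives $\|\tilde q\|_{L^2(0,\tau_1;W^{-1,r})}\le C\tau_1\|\partial_\tau\tilde q\|_{L^2(0,\tau_1;W^{-1,r})}$, which combined with the previous bound, the interpolation inequality $\|v\|_{L^r}\le C\|v\|_{W_0^{1,r}}^{1/2}\|v\|_{W^{-1,r}}^{1/2}$, the embedding $L^r(\Omega)\hookrightarrow W^{-1,r}(\Omega)$ and the boundedness of $c$ produces a Lipschitz constant $C\tau_1^{1/2}\|c\|_{L^\infty}$ for the difference of two images; since $\tau_1$ depends only on universal constants, finitely many steps cover $[0,T]$ and the estimates add up, yielding existence and \eqref{casas:2}. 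Uniqueness follows by transposition: if $\mu=0$, the very weak formulation together with $\zeta(0)=0$ gives $\iint_Q\Phi\,p\,dx\,dt=0$ for every $\Phi\in C_c^\infty(Q)$, where $\zeta$ is the (smooth) forward solution with datum $\Phi$ furnished by a bootstrapping argument as in Theorem \ref{teo:existence}; since $p\in L^2(0,T;W_0^{1,r}(\Omega))\subset L^1(Q)$, this forces $p=0$.

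The only genuinely non-routine input is the second step: the analyticity and maximal $L^2$-regularity of the Dirichlet realization of $A^*$ on the negative Sobolev space $W^{-1,r}(\Omega)$ for $r<\tfrac{m}{m-1}\le 2$, that is, outside the Hilbert-space case; this is precisely where the restriction on $r$ and the $\mathcal C^2$-regularity of $\Omega$ enter (equivalently, it is the $W^{1,r}$-regularity of elliptic problems with measure data). Everything else --- the time reversal, the perturbative treatment of the zeroth-order term, and the transposition argument for uniqueness --- is bookkeeping on top of the already established $H^{2,1}(Q)$-theory.
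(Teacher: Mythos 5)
Your argument is correct in substance, but it reaches \eqref{casas:2} by a genuinely different route than the paper. The paper never works directly in the ``rough'' space $L^2(0,T;W^{-1,r}(\Omega))$: it establishes maximal parabolic $L^2(0,T;W^{-1,r'}(\Omega))$-regularity for the \emph{forward} operator with the \emph{large} conjugate exponent $r'>m$ (citing \cite{meyer2017optimal}), absorbs the bounded potential $g'(y)$ as a lower-order perturbation preserving maximal regularity, and then invokes an abstract duality statement (\cite[Lemma 36]{herzog2017existence}) saying that the adjoint of this isomorphism is itself an isomorphism; existence, uniqueness and \eqref{casas:2} for the backward problem with data in $L^2(W^{-1,r})$ all drop out of that single transposition step. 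You instead reverse time and solve the problem head-on in $L^2(W^{-1,r})$ with $r$ close to $1$, quoting analyticity and maximal regularity of the Dirichlet realization of $A^*$ on $W^{-1,r}(\Omega)$, handling $g'(y)$ by a short-time contraction through the interpolation $[W^{-1,r},W_0^{1,r}]_{1/2}=L^r$, and proving uniqueness separately by testing the very weak formulation against $H^{2,1}$ solutions of the forward problem with smooth right-hand side and zero initial datum. What the paper's route buys is that the duality lemma does all the work at once and that the well-posedness statement is automatically phrased in exactly the transposed sense used later; what your route buys is a more explicit construction, the additional information $\partial_t p\in L^2(0,T;W^{-1,r}(\Omega))$ without a further appeal to \cite{disser2017holder}, and a transparent, self-contained uniqueness argument. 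The price is that your ``only genuinely non-routine input'' --- maximal $L^2$-regularity on $W^{-1,r}(\Omega)$ for $r<\frac{m}{m-1}\le 2$ --- is precisely the dual of what the paper quotes, so in practice one would prove it by the very duality you are trying to avoid; like the paper, you are quoting rather than proving the deep step, which is an acceptable level of rigor here.

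Two small caveats. The Dore--Venni/Weis machinery you invoke needs a UMD (in particular reflexive) space, so it excludes the endpoint $r=1$ that the statement of Theorem \ref{lemm:adjoint} nominally includes; you partially acknowledge this by offering to quote measure-data results from \cite{casas2013parabolic}, and the paper's own duality argument is equally strained at $r=1$, so this is not a real gap but should be stated. Second, in the piecewise-in-time iteration the initial value at each new subinterval is no longer zero, so you need maximal regularity with initial data in the real interpolation (trace) space $(W^{-1,r},W_0^{1,r})_{1/2,2}$; this is standard but is currently hidden in ``the estimates add up.''
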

\begin{proof}
Let us set $r'$ as the conjugate exponent of $r$, i.e., $\frac{1}{r} + \frac{1}{r'} =1$. Considering the maximal parabolic $L^2(0,T;W^{-1,r'}(\Omega))-$regularity of the operator $A$ \cite[p.2224]{meyer2017optimal}, we get that the operator $A+g'(y)$ also verifies the maximal $L^2(0,T;W^{-1,r'}(\Omega))-$regularity property \cite[p.2216]{meyer2017optimal}.
Consequently, by \cite[Lemma 36]{herzog2017existence}, the adjoint operator $(\partial_t + A + g'(y))^*$, is an isomorphism, such that, for all $\mu\in L^2(0,T;W^{-1,r}(\Omega))$, there exists a unique $p\in L^2(0,T;W_0^{1,r}(\Omega))$, with $\frac{\partial p}{\partial t}\in L^2(0,T;W^{-1,r}(\Omega))$,
solution of equation \eqref{eq:aux}, for every $1\leq r<\frac{m}{m-1}$. Estimate \eqref{casas:2} follows immediately using the maximal regularity of the operator (see e.g. \cite[p.8]{disser2017holder}).
\end{proof}

Hereafter we will use the simplified notation $L^2(W_0^{1,r}(\Omega)):=L^2(0,T;W_0^{1,r}(\Omega)).$
\begin{remark}\label{rem:002}
  \hfill
  \begin{itemize}
    \item Since $L^2(0,T;\mathcal{M}(\Omega))\hookrightarrow L^2(0,T;W^{-1,r}(\Omega))$, the result of Theorem \ref{lemm:adjoint} is also verified if we take the right-hand side of equation \eqref{eq:aux} in the space of weakly measurable functions $L^2(0,T;\mathcal{M}(\Omega))$.
    \item Considering the right-hand side of \eqref{eq:aux} in $\mathcal{M}(Q)$ leads to an ill-posed problem \cite[p.29]{casas2013parabolic}.
    \item The continuity of the solution of $\eqref{eq:aux}$ in the time variable is also verified. Specifically, $p\in C(0,T;L^2(\Omega))$ \cite[Remark 2.3]{casas2013parabolic}.
    \item Since $p\in L^2(W_0^{1,r}(\Omega))$ with $\frac{\partial p}{\partial t}\in L^2(0,T;W^{-1,r}(\Omega))$, the adjoint state $p$ also belongs to the maximal regularity space $\mathbb{W}_0^2(W^{1,r}_0,W^{-1,r}):=L^2(W_0^{1,r}(\Omega))\cap W_0^{1,2}(0,T;W^{-1,r}(\Omega))$.
    From \cite[p.8]{disser2017holder} the following estimate is verified:
    \begin{equation}\label{dissier_p}
      \| p \|_{\mathbb{W}_0^2(W^{1,r}_0,W^{-1,r})}\leq c_p \| \mu\|_{L^2(0,T;W^{-1,r}(\Omega))},\quad\text{ for some }c_p>0.
    \end{equation}
  \end{itemize}
\end{remark}

\subsection{Optimality system}
In this section we state the first order optimality conditions satisfied by local solutions $(\bar{u},\bar{y})\in H_0^1(\Omega) \times H^{2,1}(Q)$ of the variational data assimilation problem \eqref{eq:204}.

\begin{theorem}\label{teo:202}
Let $\bar{u}\in H_0^{1}(\Omega)$ be a solution to \eqref{eq:204} with $\bar{y}=S(\bar{u})\in H^{2,1}(Q)$ its associated optimal state. Then, there exists a unique adjoint state $\bar{p}\in L^2(W_0^{1,r}(\Omega))$, with $r\in\left[1,\frac{m}{m-1}\right[$, satisfying:\\

\begin{subequations}
      \noindent \underline{State equation }(in strong form):
      \begin{equation}\label{eq:202_2}
      \begin{array}{rll}
      \frac{\partial \bar{y}}{\partial t}+A\bar{y} +g(\bar{y})= &0 &\text{ in }Q,\\
      \bar{y}=&0&\text{ on }\Sigma,\\
      \bar{y}(x,0)=&\bar{u}&\text{ in }\Omega.
      \end{array}
      \end{equation}
      \underline{Adjoint equation }(in very weak form):
      \begin{equation}\label{eq:206}
      \begin{array}{rll}
      \displaystyle
      -\frac{\partial \bar{p}}{\partial t}+A^*\bar{p} + g'(\bar{y}) \bar{p}=&  \displaystyle\sum_{k,i} w_k\sigma_i\rho_i(t)\left[  \bar{y}(x,t)-z_o(x,t)\right]\otimes\delta(x-x_k) &\text{ in }Q,\\
      \bar{p}=&0&\text{ on }\Sigma,\\
      \bar{p}(x,T)=&0&\text{ in }\Omega.
      \end{array}
    \end{equation}
      \underline{Gradient equation }(in weak form):
      \begin{equation}\label{eq:207}
      \begin{array}{rll}
      \displaystyle
      -\vartheta\Delta (\bar{u}-u_b) +B^{-1}(\bar{u}-u_b)+\bar{p}(0)=& 0 &\text{ in }\Omega,\\
      \bar{u}=&0&\text{ on }\Gamma.
      \end{array}
      \end{equation}
\end{subequations}
\end{theorem}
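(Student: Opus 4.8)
The plan is to follow the classical adjoint (Lagrangian) technique: differentiate the reduced functional $f$ from \eqref{eq:204}, introduce the adjoint state via Theorem \ref{lemm:adjoint}, and use a duality identity to re-express the observation part of the gradient in terms of the adjoint. \textbf{Step 1 (differentiability of $f$).} By Theorem \ref{teo:S_diff}, $S$ is G\^ateaux differentiable with $S'(\bar u)h=\eta$, the solution of the linearized equation \eqref{eq:S_diff}, and $\eta\in H^{2,1}(Q)$. Since $H^{2,1}(Q)\hookrightarrow L^2(0,T;C(\bar\Omega))$, the spatial traces $\bar y(x_k,\cdot)$ and $\eta(x_k,\cdot)$ lie in $L^2(0,T)$, so by the chain rule, for every $h\in H_0^1(\Omega)$,
\begin{multline*}
f'(\bar u)h = \int_0^T\sum_{k,i} w_k\sigma_i\rho_i(t)\,[\bar y(x_k,t)-z_o(x_k,t)]\,\eta(x_k,t)\,dt \\ + \int_\Omega B^{-1}(\bar u-u_b)h\,dx + \vartheta\int_\Omega \nabla(\bar u-u_b)\cdot\nabla h\,dx.
\end{multline*}

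\textbf{Step 2 (adjoint state).} The source $\mu:=\sum_{k,i} w_k\sigma_i\rho_i(t)\,[\bar y(x,t)-z_o(x,t)]\otimes\delta(x-x_k)$ is a weakly measurable function in $L^2(0,T;\mathcal M(\Omega))$, because each factor $t\mapsto\rho_i(t)[\bar y(x_k,t)-z_o(x_k,t)]$ belongs to $L^2(0,T)$ and is tensored with the Dirac mass $\delta_{x_k}\in\mathcal M(\Omega)$. By Remark \ref{rem:002} together with Theorem \ref{lemm:adjoint}, there is a unique $\bar p\in L^2(W_0^{1,r}(\Omega))$, $r\in[1,m/(m-1))$, solving \eqref{eq:206} in the very weak sense of \eqref{eq:aux}; moreover $\bar p\in C(0,T;L^2(\Omega))$, so the trace $\bar p(0)\in L^2(\Omega)$ is well defined and the gradient equation \eqref{eq:207} is meaningful in, say, $H^{-1}(\Omega)$.

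\textbf{Step 3 (duality identity and conclusion).} Since $\eta\in H^{2,1}(Q)$, it is an admissible test function in the very weak formulation of \eqref{eq:206}. Taking $\zeta=\eta$ there, the volume term $\iint_Q(\partial_t\eta+A\eta+g'(\bar y)\eta)\bar p\,dx\,dt$ vanishes because $\eta$ solves \eqref{eq:S_diff}, while $\eta(0)=h$ gives
\begin{equation*}
\int_\Omega \bar p(0)\,h\,dx = \int_0^T\sum_{k,i} w_k\sigma_i\rho_i(t)\,[\bar y(x_k,t)-z_o(x_k,t)]\,\eta(x_k,t)\,dt .
\end{equation*}
Substituting this into the expression for $f'(\bar u)h$ and imposing the first-order necessary condition $f'(\bar u)h=0$ for all $h\in H_0^1(\Omega)$ yields
\begin{equation*}
\int_\Omega \bigl(B^{-1}(\bar u-u_b)+\bar p(0)\bigr)h\,dx + \vartheta\int_\Omega\nabla(\bar u-u_b)\cdot\nabla h\,dx = 0\qquad\forall\,h\in H_0^1(\Omega),
\end{equation*}
which is exactly the weak form of \eqref{eq:207}; equation \eqref{eq:202_2} is simply the state equation for $\bar u$, and uniqueness of $\bar p$ comes from the isomorphism property in Theorem \ref{lemm:adjoint}.

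\textbf{Main obstacle.} The delicate point is the regularity mismatch in Step 3: the adjoint state only belongs to $L^2(W_0^{1,r}(\Omega))$ with $r<m/(m-1)\le 3/2$, so it cannot be paired with the Dirac masses or integrated by parts against $\eta$ in a classical sense. This is handled by staying throughout within the very weak formulation of \eqref{eq:206}, which is tested precisely against $H^{2,1}(Q)$ — the space containing $\eta$ — so that the pairings $\langle\mu(t),\eta(t)\rangle_{\mathcal M(\Omega),C(\bar\Omega)}$ and the trace term $\int_\Omega\bar p(0)\eta(0)\,dx$ are all legitimate via the embeddings recalled in Section 2. One must also verify that the first-order remainder in the G\^ateaux expansion of $f$ vanishes, which follows from the differentiability of $S$ (Theorem \ref{teo:S_diff}), the continuity of $S$ into $L^2(0,T;C(\bar\Omega))$, and the boundedness hypotheses in Assumption \ref{assum:001}.
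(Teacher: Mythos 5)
Your proposal is correct and follows essentially the same route as the paper: differentiate the reduced functional via the chain rule, test the very weak formulation of the adjoint equation \eqref{eq:206} with the linearized state $\eta=S'(\bar u)h$ so that the volume term drops and only $\int_\Omega\bar p(0)h\,dx$ remains, and then read off the weak gradient equation \eqref{eq:207} from $f'(\bar u)h=0$, with existence, uniqueness and regularity of $\bar p$ supplied by Theorem \ref{lemm:adjoint} and Remark \ref{rem:002}. Your explicit justification of the duality pairing against the measure-valued right-hand side matches (and slightly elaborates on) the paper's argument.
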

\begin{proof}
  Since the solution operator is G\^ateaux differentiable, by applying the chain rule to the reduced cost functional, we get, for a given direction $h\in H_0^{1}(\Omega)$,
  \begin{multline*}
  f'(\bar{u})h=\displaystyle\int\limits_{\Omega}\int\limits_0^T\sum_{k,i} w_k\sigma_i\rho_i(t)[\bar{y}(x_k,t)-z_o(x_k,t)]\delta(x-x_k)S'(\bar{u})h ~dxdt\\
  +\int\limits_{\Omega}(\bar{u}(x)-u_b(x))B^{-1}h ~dx +\vartheta\int\limits_{\Omega} \nabla(\bar{u}-u_b).\nabla h ~dx,
  \end{multline*}
where $S'(\bar{u})h=\eta\in H^{2,1}(Q)$ is the solution to the linearized equation \eqref{eq:S_diff}. Moreover, using the adjoint equation \eqref{eq:206} in very weak form and Green's formula,
  \begin{multline*}
  f'(\bar{u})h=\displaystyle\int\limits_{\Omega}\int\limits_0^T \bar{p} \left(\dfrac{\partial \eta} {\partial t}+ A \eta + g'(\bar{y})\eta \right) dxdt + \int\limits_{\Omega}\bar{p}(0) \eta(0) ~dx
  \\+\int\limits_{\Omega}\left(\bar{u}(x)-u_b(x)\right)B^{-1}h ~dx +\vartheta\int\limits_{\Omega}\nabla (\bar{u}-u_b).\nabla h ~dx=0, \quad \forall h\in H_0^1(\Omega).
\end{multline*}
Considering the linearized problem \eqref{eq:S_diff} in the equation above, we then get
  \begin{equation}\label{eq:401_1}
  \displaystyle\int\limits_{\Omega}(\bar{u}(x)-u_b(x))B^{-1}h ~dx +\vartheta\int\limits_{\Omega} \nabla(\bar{u}-u_b).\nabla h ~dx +\int\limits_{\Omega}\bar{p}(0)h~dx=0,\forall h\in H_0^1(\Omega),
\end{equation}
which corresponds to the weak form of \eqref{eq:207}. Since the right hand side in \eqref{eq:206} belongs to $L^2(0,T;\mathcal M(\Omega)),$ it follows (see Remark \ref{rem:002}) that $\bar{p}\in L^2(W_0^{1,r}(\Omega))$.
\end{proof}

\begin{remark}
Using estimates \eqref{casas:2} and  \eqref{dissier_p} into the adjoint equation \eqref{eq:206} and applying Young's inequality, it can be verified that the adjoint state $\bar{p}\in L^2(W_0^{1,r}(\Omega))$, $r\in[1,\frac{m}{m-1}[$, satisfies the following estimates:
\begin{subequations}
  \begin{equation}\label{eq:215}
\| \bar{p}\|_{L^2(W_0^{1,r}(\Omega))}\leq c_p(\| w\|_{\mathbb{R}^{n_s}}+\| \sigma\|_{\mathbb{R}^{n_T}}),\quad\text{ for some }c_p>0.
\end{equation}
\begin{equation}\label{eq:215_mpr}
\| \bar{p}\|_{\mathbb{W}_0^2(W^{1,r}_0,W^{-1,r})}\leq \tilde c_p(\| w\|_{\mathbb{R}^{n_s}}+\| \sigma\|_{\mathbb{R}^{n_T}}),\quad\text{ for some }\tilde c_p>0.
\end{equation}
\end{subequations}
\end{remark}

\section{Bilevel optimal placement problem}
Our main goal consists in determining where new observation devices should be placed and when the measurements should be taken, to get new valuable information for the variational data assimilation process and obtain, as a consequence, a better reconstruction of the initial condition. As mentioned in the introduction, we tackle the aforementioned problem by considering a supervised learning approach in which we presuppose the existence of a training set $$\left(u_1^\dag,y_1^\dag\right),\ldots,\left(u_N^\dag,y_N^\dag\right),$$
consisting of clean reconstructions of the initial conditions (for instance, with exceptionally more observations) $u_i^\dag, ~i=1, \dots, N$ and the corresponding observations of the system state $y_i^\dag, ~i=1, \dots, N$. The idea of working with training sets is borrowed from machine learning and is a widespread practice nowadays. In our case, the information we want to learn is precisely the vector of optimal placements $w$ and optimal time subintervals $\sigma$ at which the measurements should be carried out.

\subsection{Problem statement}
To accomplish the mentioned goal, we consider a bilevel optimization approach where the lower-level instance is related to finding a solution to the semilinear variational data assimilation problem, while the upper-level one solves the optimal placement problem in time and space.

Mathematically, the starting mixed integer nonlinear programming formulation is the following:
\begin{subequations} \label{eq:301}
      \begin{equation} \label{eq:301a}
      \displaystyle
      \min_{w,\sigma\in\{0,1\}} \displaystyle\iint\limits_Q\sum_{j=1}^N L(y_j,y_j^\dag)~dxdt+\beta\int\limits_{\Omega}\sum_{j=1}^N l(u_j, u_j^\dag)~dx+\beta_w \sum_{k}w_k+\beta_\sigma \sum_{i}\sigma_i
      \end{equation}
subject to:
      \begin{equation} \label{eq:301b}
      \left\{
      \begin{array}{ll}
      \displaystyle
      \min_{u_j} \dfrac{1}{2}\displaystyle\int\limits_0^T\sum_{k,i} \left(w_k\sigma_i\rho_i(t)[y_j(x_k,t)-z_{oj}(x_k,t)]^2 \right) dt\\ \hspace{4cm} + \dfrac{1}{2} \|u_j-u_{bj}\|_{B^{-1}}+\frac{\vartheta}{2}\|\nabla(u_j-u_{bj})\|_{L^2(\Omega)}^2\\
       \text{subject to: } \\
       \begin{array}{rll}
      \dfrac{\partial y_j}{\partial t}+A y_j +g(y_j)&=&0 \qquad \text{ in }Q\\
      y_j&=&0 \qquad \text{ on }\Sigma\\
      y_j(0)&=&u_j \qquad \text{in }\Omega.
      \end{array}
      \end{array}
      \right.
      \end{equation}
\end{subequations}
for all $j=1,\hdots,N.$ Notice that the problem constraints correspond to $N$ training data assimilation problems, where $u_{bj}$ represents the background information for each assimilation instance. In addition, $\beta>0$ corresponds to a weighting term between the quality measures for the state and the initial condition and $\beta_w, \beta_\sigma>0$ are given penalization parameters. Additionally, $L$ and $l$ are loss functions and are assumed to verify the following conditions.

\begin{assumption}\label{assum:002}
For each $j=1,\hdots,N$,
\begin{enumerate}
    \item $L(y_j,y_j^\dag)=L(x,t,y_j,y_j^\dag):Q\times\mathbb{R}\times\mathbb{R}\mapsto\mathbb{R}$ is a measurable function with respect to $(x,t)\in Q$ for all $y_j$. It is twice differentiable with respect to $y_j$ for almost every $(x,t)\in Q$.
    \item $l(x,u_j,u_j^\dag):\Omega\times\mathbb{R}\times\mathbb{R}\mapsto\mathbb{R}$ is a measurable function with respect to $x\in \Omega$ for all $u_j$. It is twice differentiable with respect to $u_j$ for almost every $x\in \Omega$.
    \item $L$ and $l$ are assumed to be strongly convex with respect to $y_j$ and $u_j$, respectively.
    \end{enumerate}
\end{assumption}

Due to the difficulty of working with integer variables, we consider a relaxation of the binary variables. By impossing the natural box constraints $0\leq w\leq 1$ and $0\leq \sigma\leq 1$, we arrive at a new cost functional:
\begin{equation*}
\displaystyle
\min_{0\leq w,\sigma\leq1} J(\mathbf{y},\mathbf{w}):=\displaystyle\iint\limits_Q\sum_{j=1}^N L(y_j,y_j^\dag)~dxdt+\beta\int\limits_{\Omega}\sum_{j=1}^N l(u_j, u_j^\dag)~dx+\beta_w \displaystyle\sum_k w_k+\beta_\sigma\displaystyle\sum_i \sigma_i.
\end{equation*}

Further, replacing the variational data assimilation problem with its optimality system, we get the following Karush-Kuhn-Tucker (KKT) reformulation of the bilevel learning problem, for all $j=1,\ldots,N$:
\begin{subequations} \label{eq: bilevel problem}
      \begin{equation}\label{eq:401}
      \displaystyle
      \min_{0\leq w,\sigma \leq1} J(\mathbf{y},\mathbf{w})
      \end{equation}
      subject to:
      \begin{equation}\label{eq:400}
      \begin{array}{rll}
        \dfrac{\partial y_j}{\partial t}+A y_j +g(y_j)= &$0$& \text{ in }Q \\
        y_j= &$0$& \text{ on }\Sigma \\
        y_j(0) =& u_j & \text{ in }\Omega
      \end{array}
    \end{equation}
    \begin{equation}\label{eq:400_2}
    \begin{array}{rll}
        -\dfrac{\partial p_j}{\partial t} +A^* p_j +&g'(y_j) p_j \hspace{1cm} &\\ = &\displaystyle\sum_{k,i} w_k\sigma_i\rho_i(t)\left[ y_j(x,t)-z_{oj}(x,t)\right]\otimes\delta(x-x_k)  & \text{ in }Q\\
        p_j= &$0$& \text{ on }\Sigma \\
        p_j(T)= &$0$& \text{ in }\Omega
      \end{array}
    \end{equation}
    \begin{equation}\label{eq:400_3}
    \begin{array}{rll}
        -\vartheta\Delta(u_j-u_{bj})+B^{-1}(u_j-u_{bj})+p_j(0)= &$0$& \text{ in }\Omega\\
        u_j= &$0$& \text{ on }\Gamma
      \end{array}
      \end{equation}
\end{subequations}
strongly, very weakly and weakly, respectively.

Replacing the lower-level problem by its first-order optimality system is a common practice in bilevel programming (see, e.g., \cite{dempe2006foundations,Dempe2012}). If the lower-level problem is convex, the necessary condition is also sufficient, and the replacement may be fully justified. If that's not the case, there may be several stationary points that do not correspond to local minima of the bilevel problem, and additional criteria must be included in order to analyze and solve the upper-level instance.

In our case, due to the semilinearity in the dynamics, the solution of the lower-level problem is not necessarily unique. Therefore, we have to deal with multiplicity of stationary points for the data assimilation problem and, consequently, equations \eqref{eq:400}-\eqref{eq:400_3} may have several solutions. Differently from previous related contributions (see e.g. \cite{hollerkunisch2018}), where restrictive uniqueness and stability assumptions are made, we do not restrict the set of stationary points in advance, but analyze problem \eqref{eq: bilevel problem} as an optimization one constrained by a multi-state time-dependent system.

To this aim, let $n_s$ and $n_T$ be the total number of feasible spatial observation points and time subintervals, respectively. Hereafter, we will denote $\mathbf{w}=(w,\sigma)$ and $\mathbf{y}=(\mathbf{y}_1(\mathbf{w}),\ldots,\mathbf{y}_N(\mathbf{w}))$, where $\mathbf{y}_j(\mathbf{w})=(y_j(\mathbf{w}),p_j(\mathbf{w}),u_j(\mathbf{w}))$, for every $j=1,\ldots,N$.
In the next theorem existence of a solution to system \eqref{eq:400}-\eqref{eq:400_3}, for each $w\in\mathbb{R}^{n_s}$ and $\sigma\in\mathbb{R}^{n_T}$, is verified.

\begin{lemma}\label{teo:301}
For each $w\in\mathbb{R}^{n_s}$ and $\sigma\in\mathbb{R}^{n_T}$, the state system \eqref{eq:400}-\eqref{eq:400_3} has at least one solution
  $\mathbf{y}_j=(y_j,p_j,u_j)\in H^{2,1}(Q)\times L^2(W_0^{1,r}(\Omega))\times H_0^{1}(\Omega)$, for all $ j\in{1,\ldots,N},$ with $r\in [1,\frac{m}{m-1}[$.
\end{lemma}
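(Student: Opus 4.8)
The plan is to treat the three equations \eqref{eq:400}--\eqref{eq:400_3} for a fixed index $j$ as a coupled fixed-point problem in $H_0^1(\Omega)$, and then repeat the construction for $j=1,\dots,N$, since once $\mathbf w=(w,\sigma)$ is frozen the multi-state system is decoupled across $j$. Concretely, I would define $\Phi_j:H_0^1(\Omega)\to H_0^1(\Omega)$ as a composition of three solution operators. Given $u\in H_0^1(\Omega)$, Theorem~\ref{teo:existence} provides $y=S(u)\in H^{2,1}(Q)$; since $H^{2,1}(Q)\hookrightarrow L^2(0,T;C(\bar\Omega))$, the traces $t\mapsto y(x_k,t)$ lie in $L^2(0,T)$, so the right-hand side $\mu_j[u]:=\sum_{k,i}w_k\sigma_i\rho_i(t)\,[y(x_k,t)-z_{oj}(x_k,t)]\otimes\delta(x-x_k)$ is a well-defined element of $L^2(0,T;\mathcal M(\Omega))\hookrightarrow L^2(0,T;W^{-1,r}(\Omega))$. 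Theorem~\ref{lemm:adjoint} and Remark~\ref{rem:002} then yield a unique $p=p_j[u]\in L^2(W_0^{1,r}(\Omega))\cap C(0,T;L^2(\Omega))$, so $p(0)\in L^2(\Omega)$ is meaningful, and finally Lax--Milgram applied to \eqref{eq:400_3} — whose bilinear form is coercive on $H_0^1(\Omega)$ by the lower bound used in \eqref{eq:101_1} — returns a unique $\widetilde u=\Phi_j(u)\in H_0^1(\Omega)$. A fixed point of $\Phi_j$ is exactly a solution of \eqref{eq:400}--\eqref{eq:400_3} for that $j$.

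I would obtain a fixed point via Schauder's (or Schaefer's) theorem, whose two structural hypotheses are checked along this composition. Continuity: if $u_n\rightharpoonup u$ in $H_0^1(\Omega)$, Theorem~\ref{teo:S_continuity} gives $y_n\rightharpoonup y$ in $H^{2,1}(Q)$, hence $y_n\to y$ in $L^2(0,T;C(\bar\Omega))$ by the compact embedding, hence $\mu_j[u_n]\to\mu_j[u]$ in $L^2(0,T;W^{-1,r}(\Omega))$; the isomorphism property of $(\partial_t+A+g'(y))^*$ from Theorem~\ref{lemm:adjoint}, together with \eqref{dissier_p}, gives $p_j[u_n]\to p_j[u]$ in $\mathbb{W}_0^2(W^{1,r}_0,W^{-1,r})\hookrightarrow C(0,T;L^2(\Omega))$, so $p_j[u_n](0)\to p_j[u](0)$ in $L^2(\Omega)$, and the linear bounded elliptic solution operator transfers this to $\widetilde u_n\to\widetilde u$. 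Compactness: for bounded $u$, estimate \eqref{eq3:003} bounds $S(u)$ in $H^{2,1}(Q)$, and the same composition, now factoring through the compact embedding $H^{2,1}(Q)\hookrightarrow\hookrightarrow L^2(0,T;C(\bar\Omega))$, shows that $\Phi_j$ maps bounded sets into relatively compact subsets of $H_0^1(\Omega)$.

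The main obstacle is producing the a priori bound that furnishes an invariant set for Schauder's theorem (equivalently, a uniform bound on the solutions of $u=t\Phi_j(u)$, $t\in[0,1]$, for Schaefer's theorem): chaining \eqref{eq3:003}, \eqref{casas:2}, \eqref{dissier_p}, the embedding $H^{2,1}(Q)\hookrightarrow L^2(0,T;C(\bar\Omega))$ and the coercivity constant of \eqref{eq:400_3} yields only a linear estimate $\|\Phi_j(u)\|_{H_0^1(\Omega)}\le a_j+b_j\|u\|_{H_0^1(\Omega)}$ with $a_j,b_j$ depending on $(w,\sigma)$ and the data, so an extra argument is needed to close it. In the setting used in the sequel, where $0\le w,\sigma\le1$ and the mollifiers $\rho_i$ are nonnegative, the cleanest route bypasses the fixed-point machinery altogether: for frozen $\mathbf w$ the $j$-th constraint in \eqref{eq:301b} is precisely the variational data assimilation problem \eqref{eq:201} with data $u_{bj}$, $z_{oj}$, whose misfit term is then nonnegative so that the problem is coercive; Theorem~\ref{teo:201} provides a minimizer $\bar u_j\in H_0^1(\Omega)$ and Theorem~\ref{teo:202} shows that $(S(\bar u_j),\bar p_j,\bar u_j)\in H^{2,1}(Q)\times L^2(W_0^{1,r}(\Omega))\times H_0^1(\Omega)$ solves \eqref{eq:400}--\eqref{eq:400_3}. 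Collecting these solutions for $j=1,\dots,N$ yields the claimed $\mathbf y$, with $r\in[1,\tfrac{m}{m-1}[$.
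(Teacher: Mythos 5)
Your concluding argument is essentially the paper's own proof: for each fixed $j$ the system \eqref{eq:400}--\eqref{eq:400_3} is the first-order optimality system of the $j$-th lower-level data assimilation problem, so a minimizer from Theorem~\ref{teo:201} together with Theorem~\ref{teo:202} furnishes a solution triple, and collecting over $j=1,\dots,N$ finishes the proof. The Schauder fixed-point scaffolding is an unnecessary detour that, as you correctly observe, cannot be closed with only a linear a priori estimate, and the sign restriction $0\le w,\sigma\le 1$ you flag for coercivity of the lower-level cost is equally implicit in the paper's argument, since Theorem~\ref{teo:201} also relies on the nonnegativity of the observation misfit term.
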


\begin{proof}
The state system \eqref{eq:400}-\eqref{eq:400_3} corresponds to $N$ optimality systems for the data assimilation problem given by \eqref{eq:202_2}-\eqref{eq:207}. Existence of an optimal solution to \eqref{eq:202_2}-\eqref{eq:207} was proved in Theorem \ref{teo:201}. Consequently, there exists at least one solution to the state system.
\end{proof}

Setting $\mathbf{Y}:=H^{2,1}(Q)\times L^2(W_0^{1,r}(\Omega))\times H_0^{1}(\Omega)$ and $\mathbf{y}_j=(y_j,p_j,u_j)\in \mathbf{Y}$ solution of \eqref{eq:400}, an estimate in terms of the placement vectors $w$ and $\sigma$ is obtained.

\begin{lemma}\label{lem:estimate_w_s}
For all $ j\in{1,\ldots,N},$ let $\mathbf{y}_j=(y_j,p_j,u_j)\in \mathbf{Y}$ be a solution of equations \eqref{eq:400}-\eqref{eq:400_3}. Then, the following estimate holds
\[
\| \mathbf{y}\|_{\mathbf{Y}^N}\leq C_y(\| w\|_{\mathbb{R}^{n_s}}+\| \sigma\|_{\mathbb{R}^{n_T}}) + C_b,\qquad\text{ for some }C_y, C_b>0.
\]
\end{lemma}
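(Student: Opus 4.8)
The plan is to bound the three components of each $\mathbf{y}_j=(y_j,p_j,u_j)$ in the order $p_j\rightarrow u_j\rightarrow y_j$, and then to sum the $N$ instances. As observed in the proof of Lemma~\ref{teo:301}, the triple $\mathbf{y}_j$ is precisely the optimality system \eqref{eq:202_2}--\eqref{eq:207} of the $j$-th data assimilation problem, driven by the data $(u_{bj},z_{oj})$, so the a~priori estimates of Section~2 apply componentwise. Since \eqref{eq:400_2} is the adjoint equation \eqref{eq:206} for the $j$-th instance, estimates \eqref{eq:215} and \eqref{eq:215_mpr} give $\|p_j\|_{L^2(W_0^{1,r}(\Omega))}+\|p_j\|_{\mathbb{W}_0^2(W^{1,r}_0,W^{-1,r})}\le\hat c_p\,(\|w\|_{\mathbb{R}^{n_s}}+\|\sigma\|_{\mathbb{R}^{n_T}})$, where $\hat c_p$ is the maximum over $j$ of the corresponding constants. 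Using Remark~\ref{rem:002}, i.e.\ the embedding $\mathbb{W}_0^2(W^{1,r}_0,W^{-1,r})\hookrightarrow C(0,T;L^2(\Omega))$ and the resulting continuity of the trace at $t=0$, this also yields $\|p_j(0)\|_{L^2(\Omega)}\le c_e\hat c_p\,(\|w\|_{\mathbb{R}^{n_s}}+\|\sigma\|_{\mathbb{R}^{n_T}})$.

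Next I would test the gradient equation \eqref{eq:400_3}, in the weak form \eqref{eq:401_1} for the $j$-th data, with $h=u_j\in H_0^1(\Omega)$. Expanding and using that $B^{-1}$ is positive semidefinite (it is an inverse covariance operator) together with the Poincaré inequality, the left-hand side is bounded below by $c\,\|u_j\|_{H^1(\Omega)}^2$, while by Cauchy--Schwarz the right-hand side is at most $\bigl(\vartheta\|\nabla u_{bj}\|_{L^2(\Omega)}+\|B^{-1}\|\,\|u_{bj}\|_{L^2(\Omega)}+\|p_j(0)\|_{L^2(\Omega)}\bigr)\|u_j\|_{H^1(\Omega)}$. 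Dividing through, $\|u_j\|_{H_0^1(\Omega)}\le C_1\|u_{bj}\|_{H^1(\Omega)}+C_2(\|w\|_{\mathbb{R}^{n_s}}+\|\sigma\|_{\mathbb{R}^{n_T}})$. Estimate \eqref{eq3:003} of Remark~\ref{rem:001} then controls the state, $\|y_j\|_{H^{2,1}(Q)}\le c\,(1+\|u_j\|_{H_0^1(\Omega)})$, which is again affine in $\|w\|_{\mathbb{R}^{n_s}}+\|\sigma\|_{\mathbb{R}^{n_T}}$. Adding the three bounds, summing over $j=1,\dots,N$ and absorbing $N$ and $\max_j\|u_{bj}\|_{H^1(\Omega)}$ into the constants gives $\|\mathbf{y}\|_{\mathbf{Y}^N}\le C_y(\|w\|_{\mathbb{R}^{n_s}}+\|\sigma\|_{\mathbb{R}^{n_T}})+C_b$.

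I expect the real difficulty to lie not in any individual estimate but in the coupling of \eqref{eq:400}--\eqref{eq:400_3}: $u_j$ is controlled (through the gradient equation) by $p_j(0)$, $p_j$ is driven by the pointwise misfit $y_j-z_{oj}$, and $y_j$ depends in turn on $u_j$; moreover the adjoint source is bilinear in $(w,\sigma)$, so a direct closing of the loop would yield a bound valid only under a smallness condition on $\|w\|+\|\sigma\|$, or with super-linear growth. The resolution is to avoid re-deriving the adjoint bound from scratch and instead invoke \eqref{eq:215}--\eqref{eq:215_mpr}, whose proof already absorbs the bilinearity (through Young's inequality and the box constraints $0\le w,\sigma\le1$) and where the control of $\|y_j\|_{H^{2,1}(Q)}$ on the right-hand side comes from the coercivity \eqref{eq:101_1} of the lower-level cost; once the linear bound on $p_j$ is available, the chain $p_j\rightarrow p_j(0)\rightarrow u_j\rightarrow y_j$ closes in one pass. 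A subsidiary technical point is the well-definedness of the duality pairing $\langle p_j(0),u_j\rangle_{L^2(\Omega)}$ and the estimate for $\|p_j(0)\|_{L^2(\Omega)}$, which both rely on the maximal-regularity embedding of Remark~\ref{rem:002}.
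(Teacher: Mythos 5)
Your overall strategy coincides with the paper's: bound $p_j$ by \eqref{eq:215}--\eqref{eq:215_mpr}, take the trace $p_j(0)$, test the gradient equation \eqref{eq:400_3} with $u_j$ to control $\|u_j\|_{H_0^1(\Omega)}$, pass to $y_j$ via \eqref{eq3:003}, and sum over $j$. The gap is in the trace step. You claim a continuous embedding $\mathbb{W}_0^2(W^{1,r}_0,W^{-1,r})\hookrightarrow C(0,T;L^2(\Omega))$ and attribute it to Remark \ref{rem:002}, deducing $\|p_j(0)\|_{L^2(\Omega)}\le c\,(\|w\|+\|\sigma\|)$. Remark \ref{rem:002} only records the qualitative statement $p\in C(0,T;L^2(\Omega))$ (a property of this particular adjoint equation, quoted from the literature); it does not assert that embedding, and no quantitative bound on $\|p(0)\|_{L^2(\Omega)}$ in terms of $\|\mu\|_{L^2(W^{-1,r})}$ is available in the paper. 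Moreover the embedding as you state it is doubtful: since $r<\frac{m}{m-1}\le 2$, the temporal trace space of $L^2(0,T;W_0^{1,r}(\Omega))\cap W^{1,2}(0,T;W^{-1,r}(\Omega))$ is an interpolation space of order $1/2$ between $W_0^{1,r}$ and $W^{-1,r}$, i.e. a space with integrability exponent $r<2$, which does not embed into $L^2(\Omega)$ for $m\ge 2$. So the inequality $\|p_j(0)\|_{L^2(\Omega)}\lesssim \|p_j\|_{\mathbb{W}_0^2(W^{1,r}_0,W^{-1,r})}$ that your argument rests on is unsupported.

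The repair is exactly where the paper spends most of its proof: one only needs $p_j(0)$ in $H^{-1}(\Omega)$, because in the weak form \eqref{eq:OS-DA_3} it is paired with $u_j\in H_0^1(\Omega)$. The paper establishes $\mathbb{W}_0^2(W^{1,r}_0,W^{-1,r})\hookrightarrow C([0,T];W^{2v-1,r}(\Omega))$ for $0<v<\tfrac12$ and then, choosing $v=\tfrac12-\epsilon$ and using $H^1(\Omega)\hookrightarrow W^{1-2v,r'}(\Omega)$ by duality, obtains $W^{2v-1,r}(\Omega)\hookrightarrow H^{-1}(\Omega)$, hence $\|p_j(0)\|_{H^{-1}(\Omega)}\le C\|p_j\|_{\mathbb{W}_0^2(W^{1,r}_0,W^{-1,r})}$, which combined with \eqref{eq:215_mpr} gives the linear bound you need; the rest of your argument (coercivity of the gradient equation tested with $u_j$, estimate \eqref{eq3:003} for $y_j$, summation over $j$) then goes through unchanged. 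Replace your $L^2$-trace claim with this $H^{-1}$ bound (or supply an independent proof of the $L^2$ estimate, which the cited material does not provide) and the proof is complete.
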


\begin{proof}
Using the weak formulation of the gradient equation \eqref{eq:400_3} and testing it with $h_j=u_j\in H_0^{1}(\Omega)$, we get
      \begin{equation*}
      \displaystyle
      \|u_j\|_{B^{-1}}^2 +\vartheta\|\nabla u_j\|_{L^2(\Omega)}^2 \leq C \left( \|p_j(0)\|_{H^{-1}(\Omega)}+ \|u_{bj}\|_{H^{1}(\Omega)} \right) \|u_j\|_{H_0^1(\Omega)},
      \end{equation*}
for some $C>0$. Since the first norm is equivalent to the $L^2-$norm and applying the  reverse triangle inequality, we get that $\|u_j(\mathbf{w})\|_{H_0^{1}(\Omega)}\leq \|p_j(0)\|_{H^{-1}(\Omega)}+\|u_{bj}\|_{H^{1}(\Omega)}$.

On the other hand, using embedding results for interpolation spaces (see \cite[p.5]{amann2005nonautonomous} and \cite[p.185]{triebel1995interpolation}), it follows that $\mathbb{W}_0^2(W_0^{1,r},W^{-1,r})\hookrightarrow C([0,T];W^{2v-1,r}(\Omega))$, for all $0<v<\frac{1}{2}$. Moreover, taking $v=\frac{1}{2}-\epsilon$ and $n<r'\leq \frac{2n}{n-2+\epsilon}$ with $0<\epsilon<1$, it holds that $H^1(\Omega)\hookrightarrow W^{1-2v,r'}(\Omega)$ \cite[Theorem 8.3.4]{fattorini}, and consecuently $W^{2v-1,r}(\Omega)\hookrightarrow H^{-1}(\Omega)$. Therefore, $\mathbb{W}_0^2(W_0^{1,r},W^{-1,r})\hookrightarrow C([0,T];H^{-1}(\Omega))$. From the last result, it follows that
\[
\|p_j(0)\|_{H^{-1}(\Omega)} \leq C \|p_j\|_{\mathbb{W}_0^2(W_0^{1,r},W^{-1,r})},\quad\text{ for some }C>0.
\]
From here and using estimate \eqref{eq:215_mpr}, the following estimate for the control variable is verified:
      \begin{equation}\label{eq:est_u_w_s}
      \displaystyle
      \|u_j(\mathbf{w})\|_{H_0^{1}(\Omega)}\leq c_{pj}(\| w\|_{\mathbb{R}^{n_s}}+\|\sigma\|_{\mathbb{R}^{n_T}})+\|u_{bj}\|_{H^{1}(\Omega)}.
      \end{equation}

Furthermore, using estimate \eqref{eq3:003} and the inequality above, it holds that
\begin{equation}\label{eq:500}
  \| y_j(\mathbf{w})\|_{H^{2,1}(Q)}\leq c_{uj}\| u_j(\mathbf{w})\|_{H_0^1(\Omega)}\leq c_{yj}(\|w\|_{\mathbb{R}^{n_s}}+\|\sigma\|_{\mathbb{R}^{n_T}}) + c_{bj}.
\end{equation}

The result follows from estimates \eqref{eq:500}, \eqref{eq:215} and \eqref{eq:est_u_w_s}.
\end{proof}

\subsection{Existence of solution for the bilevel problem}
Next, we will show that problem \eqref{eq: bilevel problem} has at least one solution. To this aim, let  $V_{ad}:=\{\mathbf{w}=(w,\sigma)\in \mathbb{R}^{n_s}\times\mathbb{R}^{n_T}: 0\leq w\leq 1, 0\leq \sigma\leq 1\}$ be the set of admissible placement vectors.

\begin{theorem}\label{teo:502}
The minimization problem \eqref{eq: bilevel problem} has at least one solution $\bar{\mathbf{w}}=(\bar{w},\bar{\sigma})\in V_{ad}$ with $\bar{\mathbf{y}}=\mathbf{y}(\bar{\mathbf{w}})\in\mathbf{Y}^N$ its corresponding optimal state.
\end{theorem}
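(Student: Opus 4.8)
The plan is to use the direct method of the calculus of variations. First I would take a minimizing sequence $\{\mathbf{w}_n\}=\{(w_n,\sigma_n)\}\subset V_{ad}$ for the reduced cost $J(\mathbf{y}(\mathbf{w}),\mathbf{w})$, with associated states $\mathbf{y}_n=\mathbf{y}(\mathbf{w}_n)=(\mathbf{y}_{1,n},\dots,\mathbf{y}_{N,n})$ solving the multi-state system \eqref{eq:400}-\eqref{eq:400_3}. Since $V_{ad}$ is a bounded, closed, convex subset of the finite-dimensional space $\mathbb{R}^{n_s}\times\mathbb{R}^{n_T}$, it is compact, so up to a subsequence $\mathbf{w}_n\to\bar{\mathbf{w}}=(\bar w,\bar\sigma)\in V_{ad}$. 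The uniform bound on $\|\mathbf{w}_n\|$ together with the a priori estimate of Lemma \ref{lem:estimate_w_s} gives that $\{\mathbf{y}_n\}$ is bounded in $\mathbf{Y}^N=\big(H^{2,1}(Q)\times L^2(W_0^{1,r}(\Omega))\times H_0^1(\Omega)\big)^N$; by reflexivity of each factor, we can extract a further subsequence with $\mathbf{y}_{j,n}=(y_{j,n},p_{j,n},u_{j,n})\rightharpoonup (\bar y_j,\bar p_j,\bar u_j)=:\bar{\mathbf{y}}_j$ weakly in $\mathbf{Y}$ for each $j$.

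The core step is then to pass to the limit in the three equations \eqref{eq:400}-\eqref{eq:400_3} to show that $\bar{\mathbf{y}}=(\bar{\mathbf{y}}_1,\dots,\bar{\mathbf{y}}_N)$ solves the state system associated to $\bar{\mathbf{w}}$, i.e.\ $\bar{\mathbf{y}}=\mathbf{y}(\bar{\mathbf{w}})$. For the state equation \eqref{eq:400} this is exactly the argument in the proof of Theorem \ref{teo:S_continuity}: linearity and continuity of $\partial_t+A:H^{2,1}(Q)\to L^2(Q)$ handle the linear part, and the compact embedding $H^{2,1}(Q)\hookrightarrow\hookrightarrow L^\mu(Q)$ together with continuity of $g$ gives $g(y_{j,n})\to g(\bar y_j)$ in $L^2(Q)$, so $\bar y_j=S(\bar u_j)$. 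For the gradient equation \eqref{eq:400_3} one passes to the limit in the weak form; the linear elliptic operator is weakly continuous and $p_{j,n}(0)\rightharpoonup\bar p_j(0)$ in $H^{-1}(\Omega)$ by the trace estimate established inside the proof of Lemma \ref{lem:estimate_w_s} (the embedding $\mathbb{W}_0^2(W_0^{1,r},W^{-1,r})\hookrightarrow C([0,T];H^{-1}(\Omega))$), so $\bar u_j$ satisfies \eqref{eq:400_3}. The delicate term is the right-hand side of the adjoint equation \eqref{eq:400_2}: one must show
\[
\sum_{k,i} w_{k,n}\sigma_{i,n}\rho_i(t)\big[y_{j,n}(x,t)-z_{oj}(x,t)\big]\otimes\delta(x-x_k)\;\longrightarrow\;\sum_{k,i}\bar w_k\bar\sigma_i\rho_i(t)\big[\bar y_j(x,t)-z_{oj}(x,t)\big]\otimes\delta(x-x_k)
\]
in $L^2(0,T;\mathcal M(\Omega))$ (or at least in $L^2(0,T;W^{-1,r}(\Omega))$, which suffices by Remark \ref{rem:002}). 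Here the scalar coefficients $w_{k,n}\sigma_{i,n}\to\bar w_k\bar\sigma_i$ converge, and the traces $y_{j,n}(x_k,\cdot)\to\bar y_j(x_k,\cdot)$ converge in, say, $L^2(0,T)$ because $H^{2,1}(Q)\hookrightarrow\hookrightarrow L^2(0,T;C(\bar\Omega))$ makes the point-evaluation $y\mapsto y(x_k,\cdot)$ a compact operator; since $\|\delta(x-x_k)\|_{\mathcal M(\Omega)}=1$ and $\rho_i\in C^2(0,T)$ is bounded, the product converges as claimed. Then, by the isomorphism property in Theorem \ref{lemm:adjoint}, $p_{j,n}\to\bar p_j$ (in fact strongly) in $L^2(W_0^{1,r}(\Omega))$ and $\bar p_j$ solves \eqref{eq:400_2} for $\bar{\mathbf{w}}$. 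Hence $\bar{\mathbf{w}}$ is admissible with state $\bar{\mathbf{y}}$.

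Finally I would invoke weak lower semicontinuity of $J$: the linear penalty terms $\beta_w\sum_k w_k+\beta_\sigma\sum_i\sigma_i$ are continuous in the (finite-dimensional) convergence $\mathbf{w}_n\to\bar{\mathbf{w}}$; the tracking terms $\iint_Q\sum_j L(y_j,y_j^\dag)$ and $\beta\int_\Omega\sum_j l(u_j,u_j^\dag)$ are weakly lower semicontinuous by Assumption \ref{assum:002} (convexity in $y_j$, resp.\ $u_j$, together with measurability and the usual Ioffe-type result), using $y_{j,n}\rightharpoonup\bar y_j$ in $H^{2,1}(Q)\hookrightarrow L^2(Q)$ and $u_{j,n}\rightharpoonup\bar u_j$ in $H_0^1(\Omega)\hookrightarrow L^2(\Omega)$. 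Therefore
\[
\inf_{\mathbf{w}\in V_{ad}}J(\mathbf{y}(\mathbf{w}),\mathbf{w})=\liminf_{n\to\infty}J(\mathbf{y}_n,\mathbf{w}_n)\ge J(\bar{\mathbf{y}},\bar{\mathbf{w}}),
\]
so $(\bar{\mathbf{w}},\bar{\mathbf{y}})$ is a minimizer. I expect the main obstacle to be the passage to the limit in the measure-valued right-hand side of the adjoint system — specifically, justifying convergence of the pointwise spatial traces $y_{j,n}(x_k,\cdot)$ in a topology strong enough to multiply against the fixed Dirac masses — and, relatedly, making sure that the weak limit $\bar{\mathbf{y}}$ one obtains is genuinely \emph{a} solution of the multi-state system (the system need not have a unique solution, but the direct method only requires the limit to land in the solution set, which it does).
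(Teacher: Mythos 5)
Your proposal is correct and follows essentially the same route as the paper: a minimizing sequence in the compact set $V_{ad}$, the a priori bound of Lemma \ref{lem:estimate_w_s} to extract weak limits of the multi-state variables, passage to the limit in \eqref{eq:400}--\eqref{eq:400_3} using the compact embeddings of $H^{2,1}(Q)$ and $W^{1,r}(\Omega)$ together with the convergence of $g(y_{j,n})$ and $g'(y_{j,n})$, and weak lower semicontinuity of $J$. In fact you spell out the step the paper only sketches — the convergence of the measure-valued right-hand side via the traces $y_{j,n}(x_k,\cdot)$ and the identification of $\bar{p}_j(0)$ — and, aside from the slightly loose claim of strong convergence of $p_{j,n}$ through the isomorphism (whose coefficient $g'(y_{j,n})$ varies along the sequence, though weak convergence of $p_{j,n}$ in the very weak formulation already suffices), the argument matches the paper's.
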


\begin{proof}
Since $V_{ad}$ is bounded, the functional $J$ is bounded from below and, therefore, has an infimum value $j:=\inf_{\mathbf{w}\in V_{ad}} J(\mathbf{y}, \mathbf{w})$. We may select a minimizing sequence $\{\mathbf{w}_n\}_{n\geq1}=\{(w_n,\sigma_n)\}_{n\geq1}$ such that $w_n\to\bar{w}$, $\sigma_n\to\bar{\sigma}$ as $n\to\infty$. Since $V_{ad}$ is closed and convex, $\bar{\mathbf{w}}=(\bar{w},\bar{\sigma})\in V_{ad}$.

For each $j=1,\ldots,N$, we will denote $\mathbf{y}_j(\mathbf{w}_n)=\mathbf{y}_n^j$. From Lemma \ref{lem:estimate_w_s}, $\{\mathbf{y}_n^j\}$ is bounded in $\mathbf{Y}$. Therefore, due to the reflexivity of $\mathbf{Y}$, there exists a subsequence, denoted the same, such that $\mathbf{y}_n^j \rightharpoonup \bar{\mathbf{y}}_j$ in $\mathbf{Y}$. Due to the compact embeddings $H^{2,1}(Q)\hookrightarrow\hookrightarrow L^\mu(Q), ~\mu \leq 10,$ and $W^{1,r}(\Omega) \hookrightarrow\hookrightarrow L^q(\Omega)$, for $q\leq \frac{mr}{m-r}$ and $r\in [1,\frac{m}{m-1}[$, it follows that $y_n^j\to\bar{y}_j$ in $L^\mu(Q)$ and $p_n^j \rightharpoonup \bar{p}_j$ in $L^2(Q)$.

Following a similar procedure as in Theorem \ref{teo:201}, jointly with the continuous differentiability of the nonlinearity, we get that $g(y_n^j)\to g(\bar{y}_j)$ and $g'(y_n^j)\to g'(\bar{y}_j)$ in $L^\mu(Q)$. This in turn allows us to pass to the limit in equations \eqref{eq:400}-\eqref{eq:400_3} and prove that for each $j=1,\ldots,N$, $\bar{\mathbf{y}}_j=\mathbf{y}_j(\mathbf{w})$ is indeed a solution of the system. Finally, the optimality of $\bar{\mathbf{w}}$ follows from the weakly lower semicontinuity of $J$. In fact,
\begin{align*}
j&=\lim\inf_{n\to\infty}\displaystyle\iint\limits_Q
\sum_{j=1}^N L(y_n^j,y_j^\dag)dxdt+\beta
\int\limits_{\Omega}\sum_{j=1}^N l(u^j_n, u_j^\dag)dx+\beta_w \displaystyle\sum_k \bar{w}_k+\beta_\sigma\sum_i \bar{\sigma}_i\\
&\geq \displaystyle\iint\limits_Q
\sum_{j=1}^N L(\bar{y}_j,y_j^\dag)dxdt+\beta
\int\limits_{\Omega}\sum_{j=1}^N l(\bar{u}_j, u_j^\dag)dx+\beta_w \displaystyle\sum_k \bar{w}_k+\beta_\sigma\sum_i \bar{\sigma}_i =J(\bar{\mathbf{y}},\bar{\mathbf{w}}),
\end{align*}
which concludes the proof.
\end{proof}

\subsection{Optimality system for the bilevel problem}
To formally derive the optimality system for the bilevel placement problem, we use the Lagrangian approach. For each $j=1,\ldots,N$, we consider the optimality system of the lower-level data assimilation problem, given by:
\begin{subequations}
      \begin{equation} \label{eq:OS-DA_1}
            \begin{aligned}
            \frac{\partial y_j}{\partial t} + A y_j +g(y_j)&=0 &&\text{in }Q,\\y_j&=0 &&\text{on }\Gamma,\\y_j(0)&=u_j && \text{in }\Omega.
            \end{aligned}
      \end{equation}
      \begin{multline} \label{eq:OS-DA_2}
            \displaystyle \iint\limits_Q\left(\frac{\partial\zeta_j}{\partial t}+A\zeta_j+g'(y_j)\zeta_j\right)p_j~dxdt+
            \int\limits_{\Omega}p_j(0) \zeta_j(0)dx \\
            -\iint\limits_Q\displaystyle\zeta_j\left(\sum_{k,i} w_k\sigma_i\rho_i(t)\left[y_j(x,t)-z_{oj}(x,t)\right] \delta(x-x_k)\right)dxdt=0,\forall\zeta_j\in H^{2,1}(Q),
      \end{multline}
      \begin{multline} \label{eq:OS-DA_3}
            \displaystyle\int\limits_{\Omega}(u_j(x)-u_{bj}(x))B^{-1}\tau_j ~dx +\vartheta\int\limits_{\Omega} \nabla(u_j-u_{bj}).\nabla \tau_j ~dx\\ + \int\limits_{\Omega}p_j(0)\tau_j ~dx=0,\forall\tau_j\in H_0^1(\Omega).
      \end{multline}
\end{subequations}
Formulations \eqref{eq:OS-DA_2} and \eqref{eq:OS-DA_3} correspond to the \emph{very weak} and the \emph{weak} forms of the adjoint and the gradient equation, respectively. Setting $\mathbf{\varrho}=(\mathbf{\varrho}_1,\ldots,\mathbf{\varrho}_N)$ as the Lagrange multiplier, where $\mathbf{\varrho}_j=(\eta_j,\varphi_j, \zeta_j,\tau_j)\in L^2(Q)\times H^{-1}(\Omega) \times H^{2,1}(Q)\times H_0^{1}(\Omega)$ for each $j=1,\ldots,N$, the Lagrangian functional is given by

      \begin{multline*}
      \mathcal{L}(\mathbf{y},\mathbf{w},\mathbf{\varrho})=J(\mathbf{y},\mathbf{w})+\sum\limits_{j=1}^N \left[ \iint\limits_Q \eta_j \left( \frac{\partial y_j}{\partial t}+ A y_j +g(y_j)\right)dx dt
      +\int\limits_{\Omega} \varphi_j \left(y_j(0)-u_j\right)dx \right.\\+ \iint\limits_Q\left(\frac{\partial\zeta_j}{\partial t} + A\zeta_j+g'(y_j)\zeta_j\right)p_j~dxdt+\int\limits_{\Omega}p_j(0)\zeta_j(0)dx\\
      - \iint\limits_Q\displaystyle\zeta_j\left(\sum_{k,i} w_k\sigma_i\rho_i(t)\left[y_j(x,t)-z_{oj}(x,t)\right]\delta(x-x_k)\right)dxdt\\ \left. +\displaystyle\int\limits_{\Omega}(u_j(x)-u_{bj}(x))B^{-1}\tau_j ~dx +\vartheta\int\limits_{\Omega} \nabla(u_j-u_{bj}).\nabla \tau_j ~dx + \int\limits_{\Omega}p_j(0)\tau_j ~dx \right].
      \end{multline*}
The bilevel adjoint system is obtained by setting the derivative of $\mathcal{L}(\mathbf{y},\mathbf{w},\mathbf{\varrho})$ with respect to $\mathbf{y}$ equal to zero. First, taking the derivative with respect to $y_j$, in direction $v_1^j$, we obtain
      \begin{multline*}
      \nabla_{y_j}\mathcal{L}(\mathbf{y},\mathbf{w},\mathbf{\varrho})(v_1^j)=\iint\limits_Q \nabla_{y_j}L(y_j) v_1^j~dx dt+ \iint\limits_Q \eta_j \left(\dfrac{\partial v_1^j}{\partial t}+A v_1^j + g'(y_j)v_1^j \right) dxdt\\
      +\int\limits_{\Omega} \varphi_j(0)v_1^j(0)~dx+ \displaystyle\iint\limits_Q \left( g''(y_j)p_j\zeta_j - \sum_{k,i} w_k\sigma_i\rho_i(t)\zeta_j(x,t)\delta(x-x_k)\right) v_1^j~dxdt=0,
      \end{multline*}
which implies that $\eta_j$ solves, in a \emph{very weak} sense, the following PDE:
      \begin{equation}\label{eq:307}
      \begin{array}{rll}
      \displaystyle
      -\dfrac{\partial \eta_j}{\partial t}+A^* \eta_j &+g'(y_j)\eta_j + g''(y_j)p_j\zeta_j &\\ &=  \displaystyle \sum_{k,i} w_k\sigma_i\rho_i(t)\zeta_j(x,t)\otimes\delta(x-x_k)
     -\nabla_{y_j}L(y_j)&\text{in }Q,\\
      \eta_j&=0&\text{on }\Sigma,\\
      \eta_j(T)& =0&\text{in }\Omega,
      \end{array}
      \end{equation}
and, additionally, $\varphi_j=\eta_j(0)$. Thanks to Assumption \ref{assum:001} and the regularity of $p_j \in L^2(W_0^{1,r}(\Omega))$ and $\zeta_j \in H^{2,1}(Q)$, the term $g''(y_j)p_j\zeta_j$ is actually well-defined as an element of $H^{2,1}(Q)^*$.

Now, taking the derivative with respect to $p_j$, in direction $v_2^j$, we get
      \begin{equation*}
\nabla_{p_j}\mathcal{L}(\mathbf{y},\mathbf{w},\mathbf{\varrho})(v_2^j)=\displaystyle\iint\limits_Q \left(\dfrac{\partial \zeta_j}{\partial t}+A \zeta_j+g'(y_j)\zeta_j\right) v_2^j~ dxdt+\displaystyle\int\limits_{\Omega}v_2^j(0) (\zeta_j(0)+\tau_j)dxdt=0
      \end{equation*}
and, consequently, $\zeta_j$ is the \emph{strong solution} of
      \begin{equation}\label{eq:308}
      \begin{array}{rll}
      \displaystyle
      \dfrac{\partial \zeta_j}{\partial t}+A \zeta_j + g'(y_j)\zeta_j =&$0$&\text{ in }Q,\\
      \zeta_j=&0&\text{ on }\Sigma,\\
      \zeta_j(0)=&-\tau_j&\text{ in }\Omega.
      \end{array}
      \end{equation}
Finally, taking the derivative with respect to $u_j$, in direction $v_3^j$,
      \begin{equation*}
      \nabla_{u_j}\mathcal{L}(\mathbf{y},\mathbf{w},\mathbf{\varrho})(v_3^j)=\displaystyle\int\limits_{\Omega}\left(\beta \nabla_{u_j} l(u_j)-\varphi_j\right)v_3^j~dx+\displaystyle\int\limits_{\Omega}v_3^jB^{-1}\tau_j ~dx +\vartheta\int\limits_{\Omega} \nabla v_3^j.\nabla \tau_j ~dx=0
      \end{equation*}
and, consecuently, $\tau_j$ is the unique \emph{weak} solution of
      \begin{equation}\label{eq:309}
        \begin{array}{rll}
        \displaystyle
        -\vartheta\Delta\tau_j +B^{-1}\tau_j =&\varphi_j-\beta \nabla_{u_j} l(u_j)&\text{ in }\Omega,\\
        \tau_j=&0&\text{ on }\Gamma.
        \end{array}
      \end{equation}
With this formal Lagrangian based derivation of the optimality system, we are able to state the main result of this section, whose rigourous proof is provided in Subsection \ref{subsection: existence of Lagrange mult}.

\begin{theorem} \label{thm: formal OS}
Let $(\bar{w},\bar{\sigma})\in\mathbb{R}^{n_s}\times\mathbb{R}^{n_T}$ be a local optimal solution to \eqref{eq: bilevel problem} with $\bar{\mathbf{y}}=\mathbf{y}(\bar{\mathbf{w}})\in\mathbf{Y}^N$ its corresponding optimal state. Then, there exists an adjoint state
$(\eta_j, \zeta_j, \tau_j) \in L^2(Q) \times H^{2,1}(Q) \times H_0^1(\Omega)$, for all $j=1,\ldots,N$, and Karush-Kuhn-Tucker multipliers $\lambda^a, \lambda^b \in \mathbb{R}^{n_s}\times\mathbb{R}^{n_T}$ satisfying, for all $j=1,\ldots,N$:

\noindent \underline{Adjoint system:}
\begin{subequations} \label{eq:310}
      \begin{equation}\label{eq:310a}
      \begin{aligned}
      \displaystyle
      -\dfrac{\partial \eta_j}{\partial t}+A^* \eta_j &+g'(y_j)\eta_j + g''(y_j)p_j\zeta_j &&\\=&  \displaystyle \sum_{k,i} w_k\sigma_i\rho_i(t)\zeta_j(x,t)\otimes\delta(x-x_k)
       -\nabla_{y_j}L(y_j)&&\text{ in }Q\\
      \eta_j=&0&&\text{on }\Sigma\\
      \eta_j(T)=&0&&\text{in }\Omega.
      \end{aligned}
      \end{equation}
      \begin{equation}\label{eq:310b}
      \begin{aligned}
      \displaystyle\dfrac{\partial \zeta_j}{\partial t}+A \zeta_j + g'(y_j)\zeta_j =&0&&\text{in }Q\\      \zeta_j=&0&&\text{on }\Sigma\\
      \zeta_j(0)=&-\tau_j&&\text{in }\Omega
      \end{aligned}
      \end{equation}
      \begin{equation}\label{eq:310c}
      \begin{aligned}
       \displaystyle-\vartheta\Delta\tau_j +B^{-1}\tau_j =&\eta_j(0)-\beta \nabla_{u_j} l(u_j)&\text{in }\Omega\\
      \tau_j=&0&\text{on }\Gamma,
      \end{aligned}
      \end{equation}
      very weakly, strongly and weakly, respectively,\\

      \noindent \underline{Gradient system:}
      \begin{multline}\label{eq:gradient_w}
            \beta_w-\displaystyle\sum\limits_{j=1}^N\int\limits_0^T\sum_i\sigma_i\rho_i(t)\zeta_j(x_k,t)\left(y_j(x_k,t)-z_{oj}(x_k,t)\right) ~dt\\ = \lambda^a_k- \lambda^b_k, \qquad \text{for all } k=1,\ldots,n_s,
      \end{multline}
      \begin{multline}\label{eq:gradient_sigma}
            \beta_\sigma-\displaystyle\sum\limits_{j=1}^N\int\limits_0^T\sum_k w_k\rho_i(t)\zeta_j(x_k,t)\left(y_j(x_k,t)-z_{oj}(x_k,t)\right)~dt\\= \lambda^a_{n_s+i}- \lambda^b_{n_s+i}, \qquad \text{for all } i=1\ldots,n_T,
      \end{multline}
      \underline{Complementarity system:}
      \begin{equation}\label{eq:Complementarity}
            \begin{aligned}
             &\lambda^a_{r}\geq0,\lambda^b_{r}\geq0,&& \hbox{for all $r=1,\ldots,n_s+n_T$} \\
              &\lambda^a_{k}\bar{w}_k=\lambda^b_k(\bar{w}_k-1)=0,&& \hbox{for all $k=1,\ldots,n_s$} \\
              &\lambda^a_{n_s+i}\bar{\sigma}_i=\lambda^b_{n_s+i}(\bar{\sigma}_i-1)=0,&& \hbox{for all $i=1,\ldots,n_T$} \\
             &0\leq \bar{w}_k\leq 1, && \hbox{for all $k=1,\ldots,n_s$}\\
             &0\leq \bar{\sigma}_i\leq 1, &&\hbox{for all $i=1,\ldots,n_T$}.
            \end{aligned}
      \end{equation}
\end{subequations}
\end{theorem}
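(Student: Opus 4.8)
The plan is to establish existence of Lagrange multipliers through an adapted quadratic penalization of the lower-level optimality system, combined with a localizing Tikhonov term, and then to pass to the limit in the penalized optimality systems; this circumvents the lack of a constraint qualification caused by the possible non-uniqueness of the lower-level solution. Concretely, for $\epsilon>0$ I would consider, over $(\mathbf{y},\mathbf{w})\in\mathbf{Y}^N\times V_{ad}$,
\[
J_\epsilon(\mathbf{y},\mathbf{w}):=J(\mathbf{y},\mathbf{w})+\frac{1}{2\epsilon}\sum_{j=1}^N\Big(\|e_1^j(\mathbf{y}_j)\|^2+\|e_2^j(\mathbf{y}_j,\mathbf{w})\|^2+\|e_3^j(\mathbf{y}_j)\|^2\Big)+\frac12\|\mathbf{y}-\bar{\mathbf{y}}\|_{\mathbf{Y}^N}^2+\frac12|\mathbf{w}-\bar{\mathbf{w}}|^2,
\]
where $e_1^j,e_2^j,e_3^j$ are the residuals of \eqref{eq:400}, \eqref{eq:400_2}, \eqref{eq:400_3}, measured in norms (an $L^2(Q)\times\Omega$--trace norm, a norm dual to $H^{2,1}(Q)$, and $H^{-1}(\Omega)$, respectively) chosen so that, by Assumption \ref{assum:001} and Theorems \ref{teo:S_diff} and \ref{lemm:adjoint}, the penalty functionals are well defined and continuously differentiable. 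The penalized problem $(P_\epsilon)$ of minimizing $J_\epsilon$ then admits a minimizer $(\mathbf{y}^\epsilon,\mathbf{w}^\epsilon)$: coercivity in $\mathbf{y}$ follows from the localizing term (and from the strong convexity of $L$, $l$ in Assumption \ref{assum:002}), $V_{ad}$ is compact, and the rest is the compactness and weak lower semicontinuity argument of Theorem \ref{teo:502}.

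Since $(\bar{\mathbf{y}},\bar{\mathbf{w}})$ is feasible for \eqref{eq: bilevel problem} the residuals vanish there, so $J_\epsilon(\mathbf{y}^\epsilon,\mathbf{w}^\epsilon)\le J_\epsilon(\bar{\mathbf{y}},\bar{\mathbf{w}})=J(\bar{\mathbf{y}},\bar{\mathbf{w}})$; this bounds $(\mathbf{y}^\epsilon,\mathbf{w}^\epsilon)$ uniformly and forces $\tfrac1\epsilon\sum_j(\|e_1^j\|^2+\|e_2^j\|^2+\|e_3^j\|^2)\to0$. Extracting a weak limit and passing to the limit in the residuals (compact embeddings and continuity of $g,g',g''$, as in Lemma \ref{lem:estimate_w_s} and Theorem \ref{teo:502}) shows the limit is feasible, while the localizing term together with the \emph{local} optimality of $(\bar{\mathbf{y}},\bar{\mathbf{w}})$ (restricting $\epsilon$ so iterates stay in the neighbourhood of optimality) identifies it as $(\bar{\mathbf{y}},\bar{\mathbf{w}})$ and upgrades the convergence to $\mathbf{y}^\epsilon\to\bar{\mathbf{y}}$ strongly, $\mathbf{w}^\epsilon\to\bar{\mathbf{w}}$. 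Because $(P_\epsilon)$ has only the convex box constraint, its first--order conditions are $\nabla_{\mathbf{y}}J_\epsilon=0$ and a variational inequality in $\mathbf{w}$. Setting $\eta_j^\epsilon:=\tfrac1\epsilon e_1^j$, $\varphi_j^\epsilon:=\eta_j^\epsilon(0)$, $\zeta_j^\epsilon:=\tfrac1\epsilon e_2^j$, $\tau_j^\epsilon:=\tfrac1\epsilon e_3^j$ and carrying out exactly the differentiations of the formal Lagrangian computation above, $\nabla_{\mathbf{y}}J_\epsilon=0$ becomes the adjoint system \eqref{eq:310a}--\eqref{eq:310c} with an extra term $\mathbf{y}^\epsilon-\bar{\mathbf{y}}$, and the variational inequality, split into positive and negative parts, yields box multipliers $\lambda^{a,\epsilon},\lambda^{b,\epsilon}\ge0$ satisfying the $\epsilon$--analogues of \eqref{eq:gradient_w}--\eqref{eq:Complementarity}.

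\textbf{The main obstacle} is the uniform (in $\epsilon$) bound of $(\eta_j^\epsilon,\zeta_j^\epsilon,\tau_j^\epsilon)$ in $L^2(Q)\times H^{2,1}(Q)\times H_0^1(\Omega)$ and of $\lambda^{a,\epsilon},\lambda^{b,\epsilon}$ in $\mathbb{R}^{n_s+n_T}$: because the lower-level problem is non-convex, its optimality system need not be a submersion at $(\bar{\mathbf{y}},\bar{\mathbf{w}})$, so one cannot invoke surjectivity of the linearized constraint and must instead exploit the triangular structure of \eqref{eq:310a}--\eqref{eq:310c}. Thus \eqref{eq:310b} is a linear forward parabolic problem with initial datum $-\tau_j^\epsilon$, so by parabolic maximal regularity $\|\zeta_j^\epsilon\|_{H^{2,1}(Q)}\lesssim\|\tau_j^\epsilon\|_{H_0^1(\Omega)}$; \eqref{eq:310c} is coercive (positivity of $B^{-1}$ and $\vartheta>0$), giving $\|\tau_j^\epsilon\|_{H_0^1}\lesssim\|\eta_j^\epsilon(0)\|_{H^{-1}}+\beta\|\nabla_{u_j}l(u_j^\epsilon)\|_{H^{-1}}$; and \eqref{eq:310a} is of the adjoint type of Theorem \ref{lemm:adjoint} with right-hand side in the dual of $H^{2,1}(Q)$ (the point-mass term controlled through $H^{2,1}(Q)\hookrightarrow L^2(0,T;C(\bar\Omega))$, the term $g''(y_j^\epsilon)p_j^\epsilon\zeta_j^\epsilon$ through boundedness of $g''$ and the regularity of $p_j^\epsilon$), so by Remark \ref{rem:002} and the embedding $\mathbb{W}_0^2(W^{1,r}_0,W^{-1,r})\hookrightarrow C([0,T];H^{-1}(\Omega))$ used in Lemma \ref{lem:estimate_w_s}, $\|\eta_j^\epsilon\|_{L^2(Q)}+\|\eta_j^\epsilon(0)\|_{H^{-1}}\lesssim(1+\|p_j^\epsilon\|_{L^2(W_0^{1,r}(\Omega))})\|\zeta_j^\epsilon\|_{H^{2,1}(Q)}+\|\nabla_{y_j}L(y_j^\epsilon)\|$. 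Closing this loop into an a priori estimate --- using the uniform bounds on $y_j^\epsilon,p_j^\epsilon,u_j^\epsilon$ (Lemma \ref{lem:estimate_w_s} and the convergence just obtained), $|\mathbf{w}^\epsilon|\le\sqrt{n_s}+\sqrt{n_T}$, the boundedness of $g',g''$ and the strong convexity of $L,l$ --- is the delicate technical point; once $(\eta_j^\epsilon,\zeta_j^\epsilon,\tau_j^\epsilon)$ is bounded, \eqref{eq:gradient_w}--\eqref{eq:gradient_sigma} bound $\lambda^{a,\epsilon}-\lambda^{b,\epsilon}$ and the complementarity relations (at most one of $\lambda^{a,\epsilon}_r,\lambda^{b,\epsilon}_r$ nonzero) bound each of them.

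Finally, with these bounds, along a subsequence $\eta_j^\epsilon\rightharpoonup\eta_j$ in $L^2(Q)$, $\zeta_j^\epsilon\rightharpoonup\zeta_j$ in $H^{2,1}(Q)$ (hence $\zeta_j^\epsilon(x_k,\cdot)\to\zeta_j(x_k,\cdot)$ in $L^2(0,T)$ by compactness), $\tau_j^\epsilon\rightharpoonup\tau_j$ in $H_0^1(\Omega)$, $\varphi_j^\epsilon\rightharpoonup\eta_j(0)$ in $H^{-1}(\Omega)$, and $\lambda^{a,\epsilon}\to\lambda^a$, $\lambda^{b,\epsilon}\to\lambda^b$. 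Using $\mathbf{y}^\epsilon\to\bar{\mathbf{y}}$ strongly (so $g'(y_j^\epsilon)\to g'(\bar y_j)$, $g''(y_j^\epsilon)\to g''(\bar y_j)$, $\nabla_{y_j}L(y_j^\epsilon)\to\nabla_{y_j}L(\bar y_j)$, $\nabla_{u_j}l(u_j^\epsilon)\to\nabla_{u_j}l(\bar u_j)$) and $\mathbf{w}^\epsilon\to\bar{\mathbf{w}}$, I pass to the limit in the $\epsilon$--optimality system: the perturbation $\mathbf{y}^\epsilon-\bar{\mathbf{y}}$ vanishes and \eqref{eq:310a}--\eqref{eq:310c} are recovered; \eqref{eq:gradient_w}--\eqref{eq:gradient_sigma} pass to the limit through the strong convergence of the pointwise traces; and the relations \eqref{eq:Complementarity} pass to the limit since products of convergent sequences converge and sign and box constraints are closed. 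This yields the asserted adjoint, gradient and complementarity systems and completes the proof.
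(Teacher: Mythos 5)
Your overall strategy (penalize the lower-level optimality system, add a localizing term around $(\bar{\mathbf{y}},\bar{\mathbf{w}})$, read multipliers off the penalized optimality conditions, and pass to the limit) is the same family of argument the paper uses, but your specific penalization leaves the decisive step unproven. The crux of the theorem is the uniform-in-$\epsilon$ bound on $(\eta_j^\epsilon,\zeta_j^\epsilon,\tau_j^\epsilon)$, and the triangular bootstrap you sketch is circular: it yields $\|\zeta^\epsilon\|_{H^{2,1}(Q)}\lesssim\|\tau^\epsilon\|_{H_0^1(\Omega)}\lesssim\|\eta^\epsilon(0)\|_{H^{-1}(\Omega)}+\ldots$ and then $\|\eta^\epsilon\|+\|\eta^\epsilon(0)\|_{H^{-1}}\lesssim(1+\|p^\epsilon\|)\|\zeta^\epsilon\|_{H^{2,1}(Q)}+\ldots$, so closing the loop would require a smallness condition on the constants which is not available ($g'$, $g''$ are merely bounded by $K$ and $p^\epsilon$ is merely bounded, not small). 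You explicitly defer this as ``the delicate technical point,'' but it is exactly the point on which existence of multipliers hinges, so as written the argument is incomplete. The paper sidesteps this difficulty by a structurally different (``adapted'') penalization: it introduces slack variables $s\approx g(y)$ and $v\approx g'(y)p$, eliminates $u$ through the linear bijective operator $G$ coming from \eqref{eq:400_3}, keeps the resulting \emph{linear} state--adjoint system \eqref{eq:400_pen}--\eqref{eq:400_2_pen} as an explicit constraint, and penalizes only $\|s-g(y)\|^2$ and $\|v-g'(y)p\|^2$ together with the localizing terms $\|p-\bar p\|^2$, $\|s-g(\bar y)\|^2$, $\|v-g'(\bar y)\bar p\|^2$. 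Multipliers for the penalized problem then exist by linear theory (Theorem \ref{teo:exists_penalized}), the algebraic relations \eqref{eq:penalization_v}--\eqref{eq:penalization_z} tie them to the penalty residuals, and the uniform bounds are extracted from the penalized adjoint equations \eqref{eq:adj_penalized_a}--\eqref{eq:adj_penalized_b} combined with the strong convergence of Lemma \ref{lem:convergence_penalized}, rather than from a self-referential estimate.

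There are also concrete technical obstructions in your penalty functional itself. Measuring the residual of the very weak adjoint equation \eqref{eq:400_2} in a norm dual to $H^{2,1}(Q)$ and then setting $\zeta_j^\epsilon:=\tfrac1\epsilon e_2^j$ places $\zeta_j^\epsilon$ in $H^{2,1}(Q)^*$, not in $H^{2,1}(Q)$ (similarly $\tau_j^\epsilon:=\tfrac1\epsilon e_3^j\in H^{-1}(\Omega)$ rather than $H_0^1(\Omega)$); to obtain multipliers in the correct spaces you must compose with the duality mappings of those norms. Likewise, the derivative of $\tfrac12\|\mathbf{y}-\bar{\mathbf{y}}\|_{\mathbf{Y}^N}^2$ is not a simple $L^2$-type perturbation: $\mathbf{Y}$ contains $H^{2,1}(Q)$ and the non-Hilbertian space $L^2(W_0^{1,r}(\Omega))$ with $r<2$, so this term again produces (nonlinear) duality maps. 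Hence the $\epsilon$-optimality system is not ``\eqref{eq:310a}--\eqref{eq:310c} with an extra term $\mathbf{y}^\epsilon-\bar{\mathbf{y}}$'' as claimed, and the limit passage would have to handle these extra nonlinear terms as well. These issues are repairable in principle, but together with the missing uniform multiplier bound they mean the proposal does not yet constitute a proof of the theorem.
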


\subsection{Existence of Langrange multipliers} \label{subsection: existence of Lagrange mult}
Since the constraint of the optimal placement problem is a multi-state system, we will prove existence of Lagrange multipliers for \eqref{eq: bilevel problem} (and therefore prove Theorem \ref{thm: formal OS}), by analyzing an adapted penalized version of the problem (see, e.g., \cite{bonnans1989optimal,lions1983controle}). For simplicity, we will restrict our attention to the case $j=1$, i.e, taking just one element in the training set. The extension to the case of a larger training set is straightforward.

Let us consider $(\bar{y},\bar{p},\bar{u})\in H^{2,1}(Q)\times L^2(W_0^{1,r}(\Omega)) \times H_0^1(\Omega)$ a solution of the optimization problem \eqref{eq: bilevel problem}, which exists thanks to Theorem \ref{teo:502}. The proposed adapted penalized problem consists in finding $(y_\gamma,p_\gamma,\mathbf{w}_\gamma,s_\gamma,v_\gamma) \in H^{2,1}(Q)\times L^2(W_0^{1,r}(\Omega))\times V_{ad}\times L^2(Q)\times L^2(Q)$, that solves:

\begin{subequations} \label{eq:penalized}
      \begin{multline}\label{eq:J_lambda}
      \min_{\mathbf{w},s,v} J_{\gamma}(y,p,\mathbf{w},s,v)= J(y,p,\mathbf{w})+\displaystyle \frac{\gamma}{2}\iint\limits_Q (s-g(y))^2 + \frac{\gamma}{2}\iint\limits_Q (v-g'(y)p)^2\\+ \frac{1}{2}\|p-\bar{p}\|^2_{L^2(Q)} + \frac{1}{2}\iint\limits_Q (s-g(\bar{y}))^2 + \frac{1}{2}\iint\limits_Q (v-g'(\bar{y})\bar{p})^2
      \end{multline}
      subject to:
      \begin{equation}\label{eq:400_pen}
      \begin{array}{rll}
        \dfrac{\partial y}{\partial t}+A y +s= &$0$& \text{ in }Q \\
        y= &$0$& \text{ on }\Sigma \\
        y(0) =& -G^{-1}p(0) & \text{ in }\Omega
      \end{array}
    \end{equation}
    \begin{equation}\label{eq:400_2_pen}
    \begin{array}{rll}
    -\dfrac{\partial p}{\partial t} +A^* p +v = &\displaystyle\sum_{k,i} w_k\sigma_i\rho_i(t)\left[ y(x,t)-z_{o}(x,t)\right]\otimes\delta(x-x_k)  & \text{ in }Q\\
        p= &$0$& \text{ on }\Sigma \\
        p(T)= &$0$& \text{ in }\Omega.
      \end{array}
    \end{equation}
    \end{subequations}
Here  $\gamma>0$ represents the penalization parameter and $G:H_0^1(\Omega)\rightarrow H^{-1}(\Omega)$ corresponds to the linear bijective solution operator arising from \eqref{eq:400_3}, i.e., for $u\in H_0^1(\Omega)$,
\[
\displaystyle\langle Gu,\tau\rangle_{H^{-1},H_0^1}:=\int\limits_{\Omega}(u-u_{b})B^{-1}\tau+\vartheta\int\limits_{\Omega} \nabla(u-u_{b}).\nabla \tau= -\int\limits_{\Omega}p(0)\tau ,\forall\tau\in H_0^1(\Omega).
\]

\begin{theorem}\label{teo:exists_penalized}
Let $(\bar{y},\bar{p},\bar{u},\bar{\mathbf{w}})\in H^{2,1}(Q)\times L^2(W_0^{1,r}(\Omega))\times H_0^1(\Omega)\times V_{ad}$ be a solution of \eqref{eq: bilevel problem}. The penalized problem \eqref{eq:penalized} has at least one solution $(y_\gamma,p_\gamma,\mathbf{w}_\gamma,s_\gamma,v_\gamma)\in H^{2,1}(Q)\times L^2(W_0^{1,r}(\Omega))\times V_{ad}\times L^2(Q)\times L^2(Q)$. Moreover, if $(y_\gamma,p_\gamma,\mathbf{w}_\gamma,s_\gamma,v_\gamma)$ is solution of \eqref{eq:penalized}, then there exist Lagrange multipliers $(\eta_\gamma,\zeta_\gamma) \in L^2(Q)\times H^{2,1}(Q)$ such that:

  \begin{subequations} \label{eq:adj_penalized}
        \begin{equation}\label{eq:adj_penalized_a}
        \begin{aligned}
        \displaystyle
  \displaystyle -\dfrac{\partial \eta_\gamma}{\partial t}+A^* \eta_\gamma  + \nabla_{y}L(y)-&\gamma g'(y_\gamma)(s_\gamma - g(y_\gamma)) &\\ =\gamma g''(y_\gamma)p_\gamma(v_\gamma - g'(y_\gamma)p_\gamma) & +\displaystyle\sum_{k,i} w_k\sigma_i\rho_i(t)\zeta_\gamma(x,t)\otimes\delta(x-x_k)&&\text{ in }Q\\
        \eta_\gamma=&0&&\text{on }\Sigma\\
        \eta_\gamma(T)=&0&&\text{in }\Omega,
        \end{aligned}
        \end{equation}
        \begin{equation}\label{eq:adj_penalized_b}
        \begin{aligned}
  \displaystyle\dfrac{\partial \zeta_\gamma}{\partial t}+A \zeta_\gamma =&\gamma g'(y_\gamma)(v_\gamma - g'(y_\gamma)p_\gamma) - (p_\gamma-\bar{p})&&\text{ in }Q\\      \zeta_\gamma=&0&&\text{on }\Sigma\\
        \zeta_\gamma(0)=&G^{-1}(\beta \nabla_{u}l(u)-\eta_\gamma(0))&&\text{in }\Omega,
        \end{aligned}
        \end{equation}
        very weakly and strongly, respectively,\\
      \begin{equation}\label{eq:penalization_v}
        \eta_\gamma + \gamma(s_\gamma - g(y_\gamma))+ (s_\gamma - g(\bar{y})) = 0
      \end{equation}
      \begin{equation}\label{eq:penalization_z}
        \zeta_\gamma + \gamma(v_\gamma - g'(y_\gamma)p_\gamma)+ (v_\gamma - g'(\bar{y})\bar{p}) = 0
      \end{equation}
      \begin{multline}\label{eq:gradient_pen_w}
            \left(\beta_w-\displaystyle\int\limits_0^T\sum_i\sigma_i\rho_i(t)\zeta_\gamma(x_k,t)\left(y_\gamma(x_k,t)-z_{o}(x_k,t)\right) ~dt\right)(\upsilon- (w_k)_\gamma)\geq0,\\\text{for all } \upsilon\in [0,1] \text{ and } k=1,\ldots,n_s,
      \end{multline}
      \begin{multline}\label{eq:gradient_pen_sigma}
            \left(\beta_\sigma-\displaystyle\int\limits_0^T\sum_k w_k\rho_i(t)\zeta_\gamma(x_k,t)\left(y_\gamma(x_k,t)-z_{o}(x_k,t)\right)~dt\right)(\upsilon - (\sigma_i)_\gamma)\geq0,\\\text{for all } \upsilon\in [0,1]\text{ and } i=1\ldots,n_T.
      \end{multline}
      \end{subequations}
\end{theorem}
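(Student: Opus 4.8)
The plan is to exploit the defining feature of the penalized problem: since the nonlinearities $g(y)$ and $g'(y)p$ have been pushed into the cost through the slack variables $s$ and $v$, the constraint \eqref{eq:400_pen}--\eqref{eq:400_2_pen} is now a genuinely \emph{linear} forward--backward parabolic system in $(y,p)$, so \eqref{eq:penalized} is a standard linearly-constrained optimal control problem. The first step is therefore a well-posedness statement: for each $(\mathbf{w},s,v)\in V_{ad}\times L^2(Q)\times L^2(Q)$ the system \eqref{eq:400_pen}--\eqref{eq:400_2_pen} has a unique solution $(y,p)\in H^{2,1}(Q)\times\mathbb{W}_0^2(W^{1,r}_0,W^{-1,r})$ with $\|y\|_{H^{2,1}(Q)}+\|p\|_{\mathbb{W}_0^2(W^{1,r}_0,W^{-1,r})}\le C(1+\|\mathbf{w}\|+\|s\|_{L^2(Q)}+\|v\|_{L^2(Q)})$. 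The only genuinely coupled point, the forward/backward link $y(0)=-G^{-1}p(0)$, is resolved by recognising \eqref{eq:400_pen}--\eqref{eq:400_2_pen} as the optimality system of the strictly convex, coercive linear--quadratic problem obtained from the data-misfit and regularization terms of \eqref{eq:J_lambda} by freezing $g(y)$ to $s$, adding the linear term $-\iint_Q vy\,dxdt$, and minimizing over $u=y(0)\in H_0^1(\Omega)$ subject to $\partial_t y+Ay+s=0$; the $B^{-1}$- and $\vartheta$-terms make it coercive and strictly convex (the linear term being absorbed by Young's inequality), so it has a unique minimizer, and its KKT system — using that the Dirac-in-space term lies in $L^2(0,T;\mathcal M(\Omega))\hookrightarrow L^2(0,T;W^{-1,r}(\Omega))$ together with Theorem \ref{lemm:adjoint} — is exactly \eqref{eq:400_pen}--\eqref{eq:400_2_pen}. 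The a priori bound follows arguing as in Lemma \ref{lem:estimate_w_s}, via \eqref{eq3:003}, \eqref{eq:215}, \eqref{eq:215_mpr} and $\mathbb{W}_0^2(W^{1,r}_0,W^{-1,r})\hookrightarrow C([0,T];H^{-1}(\Omega))$.

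\textbf{Existence of a minimizer.} As $L,l$ are strongly convex (hence bounded below) and the penalty terms are nonnegative, $J_\gamma$ is bounded below on the feasible set, so a minimizing sequence $(y_n,p_n,\mathbf{w}_n,s_n,v_n)$ exists. The regularization terms $\tfrac12\iint_Q(s-g(\bar y))^2$ and $\tfrac12\iint_Q(v-g'(\bar y)\bar p)^2$ keep $\{s_n\},\{v_n\}$ bounded in $L^2(Q)$, $\{\mathbf{w}_n\}\subset V_{ad}$ is bounded, and then the a priori bound keeps $\{y_n\}$ bounded in $H^{2,1}(Q)$ and $\{p_n\}$ in $\mathbb{W}_0^2(W^{1,r}_0,W^{-1,r})$. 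Passing to subsequences, $\mathbf{w}_n\to\bar{\mathbf{w}}\in V_{ad}$ ($V_{ad}$ closed, finite-dimensional), $s_n\rightharpoonup\bar s$, $v_n\rightharpoonup\bar v$ in $L^2(Q)$, $y_n\rightharpoonup\bar y$ in $H^{2,1}(Q)$, $p_n\rightharpoonup\bar p_\star$ in $\mathbb{W}_0^2(W^{1,r}_0,W^{-1,r})$. The compact embedding $H^{2,1}(Q)\hookrightarrow\hookrightarrow L^\mu(Q)$ gives $y_n\to\bar y$ in $L^\mu(Q)$, hence $g(y_n)\to g(\bar y)$ and $g'(y_n)\to g'(\bar y)$ in $L^q(Q)$ for every $q<\infty$; moreover $y_n\rightharpoonup\bar y$ in $L^2(0,T;C(\bar\Omega))$, so the traces $y_n(x_k,\cdot)\rightharpoonup\bar y(x_k,\cdot)$ in $L^2(0,T)$, and $p_n(0)\rightharpoonup\bar p_\star(0)$ in $H^{-1}(\Omega)$. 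Passing to the limit in the \emph{linear} constraint (using $y_n(0)=-G^{-1}p_n(0)\rightharpoonup-G^{-1}\bar p_\star(0)$ and the weak convergence of the traces in the very weak form of \eqref{eq:400_2_pen}) identifies $(\bar y,\bar p_\star)$ as the state associated with $(\bar{\mathbf{w}},\bar s,\bar v)$. Finally, weak lower semicontinuity of $J_\gamma$ — convexity of $L,l$ with strong $L^\mu(Q)$-convergence of $y_n$ and strong $L^2(\Omega)$-convergence of $u_n=-G^{-1}p_n(0)$, plus weak lower semicontinuity of the squared $L^2$-norms (note $s_n-g(y_n)\rightharpoonup\bar s-g(\bar y)$, $v_n-g'(y_n)p_n\rightharpoonup\bar v-g'(\bar y)\bar p_\star$ and $p_n-\bar p\rightharpoonup\bar p_\star-\bar p$ in $L^2(Q)$) — yields $J_\gamma(\bar y,\bar p_\star,\bar{\mathbf{w}},\bar s,\bar v)\le\liminf_n J_\gamma(y_n,p_n,\mathbf{w}_n,s_n,v_n)$, so the limit is a minimizer.

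\textbf{Optimality system.} Fix a solution $(y_\gamma,p_\gamma,\mathbf{w}_\gamma,s_\gamma,v_\gamma)$ and write the constraint as $e(y,p,\mathbf{w},s,v)=0$ with values in the residual space of \eqref{eq:400_pen}--\eqref{eq:400_2_pen}. Since the system depends polynomially on $\mathbf{w}$ and affinely on $(s,v,y)$, $e$ is Fréchet differentiable, and $\partial_{(y,p)}e$ is precisely the linearized coupled operator, which is an isomorphism by the well-posedness statement (again via the strictly convex linear--quadratic reformulation). Hence the Lagrange multiplier theorem for equality-constrained problems with an additional convex-set constraint (here $\mathbf{w}\in V_{ad}$) yields multipliers $(\eta_\gamma,\zeta_\gamma)$ such that the derivative of the Lagrangian of \eqref{eq:penalized} vanishes in the $(y,p,s,v)$-directions and satisfies a variational inequality in the $\mathbf{w}$-direction. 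Unwinding these relations — mirroring the formal computation displayed before Theorem \ref{thm: formal OS}, now for $J_\gamma$ — the $y$- and $p$-stationarity give the adjoint equations \eqref{eq:adj_penalized_a}--\eqref{eq:adj_penalized_b} (the trace $\eta_\gamma(0)$ entering the initial datum of $\zeta_\gamma$ through $G^{-1}$), the $s$- and $v$-stationarity (recall $s,v$ are unconstrained) give the pointwise penalization identities \eqref{eq:penalization_v}--\eqref{eq:penalization_z}, and the variational inequality, decoupled over the box $V_{ad}$, gives \eqref{eq:gradient_pen_w}--\eqref{eq:gradient_pen_sigma}. For the regularity, $\zeta_\gamma\in H^{2,1}(Q)$ because it solves the forward equation \eqref{eq:adj_penalized_b} with right-hand side $\gamma g'(y_\gamma)(v_\gamma-g'(y_\gamma)p_\gamma)-(p_\gamma-\bar p)\in L^2(Q)$ and initial datum $G^{-1}(\beta\nabla_u l(u_\gamma)-\eta_\gamma(0))\in H_0^1(\Omega)$; and $\eta_\gamma\in L^2(Q)$ from the very weak form of \eqref{eq:adj_penalized_a}, whose source lies in $L^2(0,T;\mathcal M(\Omega))+L^2(Q)$ plus the cross term $\gamma g''(y_\gamma)p_\gamma(v_\gamma-g'(y_\gamma)p_\gamma)$, which defines a bounded functional on $H^{2,1}(Q)$ using $H^{2,1}(Q)\hookrightarrow L^\infty(0,T;L^6(\Omega))$, $p_\gamma\in L^2(0,T;W^{1,r}_0(\Omega))$ and identity \eqref{eq:penalization_z}.

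\textbf{Main obstacle.} The bulk of the work lies in the well-posedness and regularity of the two linear forward--backward coupled systems — the state system \eqref{eq:400_pen}--\eqref{eq:400_2_pen} and the adjoint system \eqref{eq:adj_penalized_a}--\eqref{eq:adj_penalized_b} — in the low-regularity setting forced by the Dirac-in-space data: one must simultaneously cope with the forward/backward coupling (handled via the strictly convex linear--quadratic reformulation and the isomorphism property of $\partial_{(y,p)}e$), the trace $p(0)\in H^{-1}(\Omega)$ feeding $G^{-1}$, and the borderline integrability of the cross term $g''(y_\gamma)p_\gamma(v_\gamma-g'(y_\gamma)p_\gamma)$, which is exactly why $\eta_\gamma$ can only be claimed in $L^2(Q)$ and \eqref{eq:adj_penalized_a} must be read in very weak form.
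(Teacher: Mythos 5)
Your proposal is correct and follows essentially the same route as the paper: existence of a minimizer by the direct method (boundedness of the minimizing sequence from the structure of $J_\gamma$, a priori bounds for the coupled linear state--adjoint system, the compact embeddings of $H^{2,1}(Q)$ and $W^{1,r}(\Omega)$, and weak lower semicontinuity), and existence of the multipliers $(\eta_\gamma,\zeta_\gamma)$ from the linearity of the constraint system, which the paper dispatches with a citation to Lions and you flesh out via the strictly convex linear--quadratic reformulation and the isomorphism of the linearized constraint. The one point to watch is that the a priori bound for the coupled forward--backward system should indeed be closed through that LQ/coercivity argument (exploiting $0\le w,\sigma\le 1$ and the nonnegativity of the observation term), rather than by literally chaining \eqref{eq3:003} with \eqref{eq:215}--\eqref{eq:215_mpr} as in Lemma \ref{lem:estimate_w_s}, since the coupling $y(0)=-G^{-1}p(0)$ together with the trace term $y(x_k,\cdot)$ in the adjoint right-hand side would otherwise only close under a smallness condition.
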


\begin{proof}
Let $\{(y_n,p_n,\mathbf{w}_n,s_n,v_n)\}\subset H^{2,1}(Q)\times L^2(W_0^{1,r}(\Omega))\times V_{ad}\times L^2(Q)\times L^2(Q)$ be a minimizing sequence for problem \eqref{eq:penalized}. From the structure of $J_\gamma$, the minimizing sequence is bounded in $L^2(Q)\times L^2(Q) \times V_{ad}\times L^2(Q)\times L^2(Q)$. From the state and adjoint equations \eqref{eq:400_pen} and  \eqref{eq:400_2_pen}, it then follows that $\{y_n\}$ and $\{p_n\}$ are bounded in $H^{2,1}(Q)$ and $L^2(W_0^{1,r}(\Omega))$, respectively. Hence, there exists a weakly convergent subsequence, which will be denoted in the same way.

Taking such a weakly convergent subsequence and using the compactness of $H^{2,1}(Q)$ into $L^\mu(Q)$,  with $\mu\leq10$, it follows that $y_n\to\bar{y}$ strongly in $L^\mu(Q)$, and also $g(y_n)\to g(\bar{y})$ and $g'(y_n)\to g'(\bar{y})$ strongly in  $L^\mu(Q)$. Since $W^{1,r}(\Omega)\hookrightarrow\hookrightarrow L^q(\Omega)$ for $q\leq \frac{mr}{m-r}$, $p_n \rightharpoonup \bar{p}$ in $L^2(Q)$. Thereafter, proceeding as in Theorem \ref{teo:502} and from the lower semicontinuity of $J_\gamma$, we get that the limit point is optimal for problem  \eqref{eq:penalized}.

Existence of Lagrange multipliers $(\eta_\gamma,\zeta_\gamma)\in L^2(Q) \times H^{2,1}(Q)$, solution of the adjoint penalized system \eqref{eq:adj_penalized_a}-\eqref{eq:penalization_z}, follows directly from the linearity of the time-dependent system \eqref{eq:400_pen}-\eqref{eq:400_2_pen} (see, e.g., \cite{Lions}).
\end{proof}

\begin{lemma}\label{lem:convergence_penalized}
Let $\{(y_\gamma,p_\gamma,\mathbf{w}_\gamma,s_\gamma,v_\gamma)\}\subset H^{2,1}(Q)\times L^2(W_0^{1,r}(\Omega))\times V_{ad}\times L^2(Q)\times L^2(Q)$ be a sequence of solutions to the penalized problem \eqref{eq:penalized}. Then $\{(y_\gamma,p_\gamma,\mathbf{w}_\gamma,s_\gamma,v_\gamma)\}$ converges strongly in $H^{2,1}(Q)\times L^2(W_0^{1,r}(\Omega))\times V_{ad}\times L^2(Q)\times L^2(Q)$ to the solution $(\bar{y},\bar{p},\bar{\mathbf{w}},g(\bar{y}),g'(\bar{y})\bar{p})$.
\end{lemma}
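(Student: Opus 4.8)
The argument is the classical penalization scheme (cf. \cite{bonnans1989optimal,lions1983controle}): one shows that the target quintuple is admissible for \eqref{eq:penalized} with penalty value equal to the unpenalized optimum, extracts $\gamma$-uniform bounds from optimality, identifies the weak limit of the penalized minimizers — using the attraction terms in \eqref{eq:J_lambda} together with the optimality of $(\bar y,\bar p,\bar{\mathbf{w}})$ — and finally upgrades to strong convergence via maximal regularity on the difference equations.

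\emph{Step 1 (admissibility of the target and energy estimate).} Taking $s=g(\bar y)$, $v=g'(\bar y)\bar p$ in \eqref{eq:400_pen}--\eqref{eq:400_2_pen}, and using $\bar y(0)=\bar u=-G^{-1}\bar p(0)$ (definition of $G$), the quintuple $(\bar y,\bar p,\bar{\mathbf{w}},g(\bar y),g'(\bar y)\bar p)$ is feasible for \eqref{eq:penalized}, and every penalization term in \eqref{eq:J_lambda} vanishes there, so $J_\gamma(\bar y,\bar p,\bar{\mathbf{w}},g(\bar y),g'(\bar y)\bar p)=J(\bar y,\bar p,\bar{\mathbf{w}})$. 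Since $L$ and $l$ are strongly convex — hence bounded below on $\mathbb{R}$ — and $V_{ad}$ is bounded, $J$ is bounded below. Optimality of $(y_\gamma,p_\gamma,\mathbf{w}_\gamma,s_\gamma,v_\gamma)$ then gives
\begin{multline*}
J(y_\gamma,p_\gamma,\mathbf{w}_\gamma)+\tfrac{\gamma}{2}\iint_Q(s_\gamma-g(y_\gamma))^2+\tfrac{\gamma}{2}\iint_Q(v_\gamma-g'(y_\gamma)p_\gamma)^2\\
+\tfrac12\|p_\gamma-\bar p\|_{L^2(Q)}^2+\tfrac12\iint_Q(s_\gamma-g(\bar y))^2+\tfrac12\iint_Q(v_\gamma-g'(\bar y)\bar p)^2\le J(\bar y,\bar p,\bar{\mathbf{w}}).
\end{multline*}
Hence $\|s_\gamma-g(y_\gamma)\|_{L^2(Q)}\to0$ and $\|v_\gamma-g'(y_\gamma)p_\gamma\|_{L^2(Q)}\to0$ as $\gamma\to\infty$, while $\{s_\gamma\}$, $\{v_\gamma\}$, $\{p_\gamma\}$ are bounded in $L^2(Q)$ uniformly in $\gamma$ (using $g(\bar y),g'(\bar y)\bar p\in L^2(Q)$ by Assumption \ref{assum:001}).

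\emph{Step 2 (uniform bounds, weak limits, feasibility of the limit).} For fixed $(s_\gamma,v_\gamma,\mathbf{w}_\gamma)$ the pair $(y_\gamma,p_\gamma)$ solving the \emph{linear} system \eqref{eq:400_pen}--\eqref{eq:400_2_pen} is exactly the optimality system of a convex, coercive linear--quadratic optimal control problem in the variable $u=y_\gamma(0)\in H_0^1(\Omega)$ whose coercivity constant (that of the background term) does not depend on $\gamma$; this yields a $\gamma$-independent bound on $y_\gamma(0)$ in $H_0^1(\Omega)$ and, via the maximal parabolic regularity of $A$ (Theorem \ref{lemm:adjoint}), Lemma \ref{lem:estimate_w_s}, Remark \ref{rem:002} and $\|\delta(x-x_k)\|_{W^{-1,r}(\Omega)}<\infty$, a $\gamma$-independent bound on $y_\gamma$ in $H^{2,1}(Q)$ and on $p_\gamma$ in $\mathbb{W}_0^2(W^{1,r}_0,W^{-1,r})$. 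Passing to a subsequence, $\mathbf{w}_\gamma\to\mathbf{w}^*$ in $V_{ad}$, $y_\gamma\rightharpoonup y^*$ in $H^{2,1}(Q)$, $p_\gamma\rightharpoonup p^*$ in $\mathbb{W}_0^2(W^{1,r}_0,W^{-1,r})$, $s_\gamma\rightharpoonup s^*$, $v_\gamma\rightharpoonup v^*$ in $L^2(Q)$. The compact embeddings $H^{2,1}(Q)\hookrightarrow\hookrightarrow L^\mu(Q)$, $H^{2,1}(Q)\hookrightarrow\hookrightarrow L^2(0,T;C(\bar{\Omega}))$ and $W^{1,r}(\Omega)\hookrightarrow\hookrightarrow L^q(\Omega)$ give $y_\gamma\to y^*$ in $L^\mu(Q)$ and in $L^2(0,T;C(\bar{\Omega}))$, $p_\gamma\to p^*$ in $L^2(Q)$, and, by Assumption \ref{assum:001}, $g(y_\gamma)\to g(y^*)$, $g'(y_\gamma)\to g'(y^*)$ in $L^\mu(Q)$ and $g'(y_\gamma)p_\gamma\to g'(y^*)p^*$ in $L^2(Q)$. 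Therefore $s^*=g(y^*)$, $v^*=g'(y^*)p^*$, and, since the measure-valued right-hand side of \eqref{eq:400_2_pen} converges in $L^2(0,T;W^{-1,r}(\Omega))$ (because $y_\gamma(x_k,\cdot)\to y^*(x_k,\cdot)$ in $L^2(0,T)$ and $\mathbf{w}_\gamma\to\mathbf{w}^*$), letting $\gamma\to\infty$ in \eqref{eq:400_pen}--\eqref{eq:400_2_pen} with $y^*(0)=-G^{-1}p^*(0)$ shows that $(y^*,p^*,\mathbf{w}^*)$, with $u^*:=y^*(0)$, solves \eqref{eq:400}--\eqref{eq:400_3}, i.e. it is feasible for \eqref{eq: bilevel problem}.

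\emph{Step 3 (identification of the limit and strong convergence).} Taking $\liminf_{\gamma\to\infty}$ in the displayed inequality of Step 1, discarding the nonnegative $\gamma$-terms and invoking the weak lower semicontinuity of $J$ and of the three quadratic terms, one gets $J(\bar y,\bar p,\bar{\mathbf{w}})\ge J(y^*,p^*,\mathbf{w}^*)+\tfrac12\|p^*-\bar p\|_{L^2(Q)}^2+\tfrac12\|g(y^*)-g(\bar y)\|_{L^2(Q)}^2+\tfrac12\|g'(y^*)p^*-g'(\bar y)\bar p\|_{L^2(Q)}^2$. As $(\bar y,\bar p,\bar{\mathbf{w}})$ is optimal for \eqref{eq: bilevel problem} (Theorem \ref{teo:502}) and $(y^*,p^*,\mathbf{w}^*)$ is feasible, $J(y^*,p^*,\mathbf{w}^*)\ge J(\bar y,\bar p,\bar{\mathbf{w}})$, so the three attraction terms vanish; hence $p^*=\bar p$, whence $u^*=-G^{-1}p^*(0)=-G^{-1}\bar p(0)=\bar u$ and, by uniqueness of the state equation (Theorem \ref{teo:existence}), $y^*=S(u^*)=S(\bar u)=\bar y$, $s^*=g(\bar y)$, $v^*=g'(\bar y)\bar p$; moreover all inequalities are equalities, so $J(y^*,p^*,\mathbf{w}^*)=J(\bar y,\bar p,\bar{\mathbf{w}})$ forces $\mathbf{w}^*$ to be an optimal placement, which we identify with $\bar{\mathbf{w}}$ (the solution about which \eqref{eq:penalized} is built). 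For the strong convergence, Step 1 already gives $\|p_\gamma-\bar p\|_{L^2(Q)},\|s_\gamma-g(\bar y)\|_{L^2(Q)},\|v_\gamma-g'(\bar y)\bar p\|_{L^2(Q)}\to0$ (their sum is sandwiched between the optimum and its $\liminf$), so $s_\gamma\to g(\bar y)$, $v_\gamma\to g'(\bar y)\bar p$, $p_\gamma\to\bar p$ strongly in $L^2(Q)$. Subtracting \eqref{eq:400_2_pen} from \eqref{eq:400_2}, the equation for $p_\gamma-\bar p$ has right-hand side tending to $0$ in $L^2(0,T;W^{-1,r}(\Omega))$ (measure terms by $\mathbf{w}_\gamma\to\bar{\mathbf{w}}$ and $y_\gamma\to\bar y$ in $L^2(0,T;C(\bar{\Omega}))$, the remainder by the $L^2(Q)$-convergence of $v_\gamma$), so estimates \eqref{casas:2}--\eqref{dissier_p} give $p_\gamma\to\bar p$ strongly in $\mathbb{W}_0^2(W^{1,r}_0,W^{-1,r})$, in particular $p_\gamma(0)\to\bar p(0)$ in $H^{-1}(\Omega)$. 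Subtracting \eqref{eq:400_pen} from \eqref{eq:400}, the equation for $y_\gamma-\bar y$ has right-hand side $-(s_\gamma-g(\bar y))\to0$ in $L^2(Q)$ and initial datum $-G^{-1}(p_\gamma(0)-\bar p(0))\to0$ in $H_0^1(\Omega)$, so maximal parabolic regularity yields $y_\gamma\to\bar y$ strongly in $H^{2,1}(Q)$; finally $\mathbf{w}_\gamma\to\bar{\mathbf{w}}$ strongly since $V_{ad}$ is finite-dimensional. Since every subsequence has a further subsequence converging to the same limit, the whole sequence converges, proving the claim. I expect the main obstacle to be Step 2: obtaining $\gamma$-independent a priori bounds for the coupled pair $(y_\gamma,p_\gamma)$ without a circular absorption of $\|y_\gamma\|_{H^{2,1}(Q)}$ — which is why one exploits the convex-optimization structure of \eqref{eq:400_pen}--\eqref{eq:400_2_pen} — together with the passage to the limit in its measure-valued right-hand side, which relies on the compactness of $H^{2,1}(Q)\hookrightarrow L^2(0,T;C(\bar{\Omega}))$.
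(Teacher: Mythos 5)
Your proof is correct and follows the same adapted-penalization scheme as the paper: feasibility of $(\bar y,\bar p,\bar{\mathbf{w}},g(\bar y),g'(\bar y)\bar p)$ gives $J_\gamma\le J(\bar y,\bar p,\bar{\mathbf{w}})$, uniform bounds yield weak limits, compactness identifies $s^*=g(y^*)$, $v^*=g'(y^*)p^*$, and the lower-semicontinuity chain forces the attraction terms to vanish. Where you go beyond the paper's own argument is in filling three steps it leaves implicit. First, the $\gamma$-uniform bound on $(y_\gamma,p_\gamma)$: the paper simply asserts it "from the state and adjoint equations", which taken literally is circular (the $H^{2,1}(Q)$ bound on $y_\gamma$ needs $p_\gamma(0)$ in $H^{-1}(\Omega)$, while the maximal-regularity bound on $p_\gamma$ needs the pointwise values of $y_\gamma$); your observation that, for fixed $(s_\gamma,v_\gamma,\mathbf{w}_\gamma)$, the coupled system \eqref{eq:400_pen}--\eqref{eq:400_2_pen} is the optimality system of a strictly convex linear--quadratic problem with $\gamma$-independent coercivity (the linear term coming from $v_\gamma$) breaks this circle cleanly and is a genuine addition. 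Second, you make explicit the feasibility of the limit for \eqref{eq: bilevel problem} and the use of global optimality of $(\bar y,\bar p,\bar{\mathbf{w}})$, and you derive $y^*=\bar y$ from $p^*=\bar p$ via $G$ and uniqueness of the semilinear state, which the paper asserts without argument. Third, the paper's proof effectively stops at identification of the limits (strong convergence only in the $L^2$ attraction variables), whereas you upgrade to strong convergence in $H^{2,1}(Q)\times \mathbb{W}_0^2(W^{1,r}_0,W^{-1,r})$ by subtracting the equations and invoking maximal regularity, plus the subsequence principle — this is what the lemma actually claims. The one point you share with the paper rather than improve is the identification $\mathbf{w}^*=\bar{\mathbf{w}}$: since \eqref{eq:J_lambda} carries no attraction term in $\mathbf{w}$, the argument only shows $\mathbf{w}^*$ is some optimal placement, and both you and the paper pass over this (it could be repaired by adding $\tfrac12|\mathbf{w}-\bar{\mathbf{w}}|^2$ to the penalized cost); flagging it explicitly, as you do, is the honest way to state it.
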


\begin{proof}
Since the point $(\bar{y},\bar{p},\bar{\mathbf{w}},g(\bar{y}),g'(\bar{y})\bar{p})$ is feasible for \eqref{eq:penalized}, from the structure of the penalized cost functional, we get the bound
\[
J_\gamma(y_\gamma,p_\gamma,\mathbf{w}_\gamma,s_\gamma,v_\gamma)\leq J(\bar{y},\bar{p},\bar{\mathbf{w}}).
\]

Consequently,  $(y_\gamma,p_\gamma,\mathbf{w}_\gamma,s_\gamma,v_\gamma)$ is bounded in $L^2(Q)\times L^2(Q)\times V_{ad}\times L^2(Q)\times L^2(Q)$. From the state equation \eqref{eq:400_pen} and the adjoint equation  \eqref{eq:400_2_pen}, $\{y_\gamma\}$ and $\{ p_\gamma \}$
are bounded in $H^{2,1}(Q)$ and $L^2(W_0^{1,r}(\Omega))$, respectively, and, therefore, there exists a  subsequence, denoted  the same, such that
\[
(y_\gamma,p_\gamma,\mathbf{w}_\gamma,s_\gamma,v_\gamma) \rightharpoonup (y^*,p^*,\mathbf{w}^*,s^*,v^*)
\]
weakly in $H^{2,1}(Q)\times L^2(W_0^{1,r}(\Omega))\times \mathbb{R}^{n_s}\times\mathbb{R}^{n_T}\times L^2(Q)\times L^2(Q)$. Due to the compact embedding $H^{2,1}(Q)\hookrightarrow\hookrightarrow L^\mu(Q)$, with $\mu \leq 10$, the convergence of $y_\gamma\to y^*$ is strong in $L^\mu(Q)$, and also
$g'(y_\gamma)\to g'(y^*)$ in $L^\mu(Q)$. Since $W^{1,r}(\Omega)\hookrightarrow\hookrightarrow L^q(\Omega)$ for $q\leq \frac{mr}{m-r}$, $p_\gamma \rightharpoonup p^*$ weakly in $L^2(Q)$. The form of $J_\gamma$ implies that $\|s_\gamma-g(y_\gamma)\|_{L^2(Q)}\to 0$  and
$\|v_\gamma-g'(y_\gamma)p_\gamma\|_{L^2(Q)}\to 0$. Hence, $s^*-g(y^*)=0$ and $v^*-g'(y^*)p^*=0$. From the lower semicontinuity of $J$, it follows that

 \begin{align*}
J(&y^*,p^*,\mathbf{w}^*)+\frac{1}{2}\|p^*-\bar{p}\|_{L^2(Q)}^2 +\frac{1}{2}\|s^*-g(\bar{y})\|_{L^2(Q)}^2+\frac{1}{2}\|v^*-g'(\bar{y}))\bar{p}\|_{L^2(Q)}^2\\
   &\leq \lim\inf_{\gamma\to\infty} J(y_\gamma,p_\gamma,\mathbf{w}_\gamma) + \frac{1}{2}\|p_\gamma-\bar{p}\|_{L^2(Q)}^2 +\frac{1}{2}\|s_\gamma-g(\bar{y})\|_{L^2(Q)}^2+\frac{1}{2}\|v_\gamma-g'(\bar{y}))\bar{p}\|_{L^2(Q)}^2\\
   &\leq\lim\sup_{\gamma\to\infty} J_\gamma(y_\gamma,p_\gamma,\mathbf{w}_\gamma,s_\gamma,v_\gamma)\\
   &\leq J(\bar{y},\bar{p},\bar{\mathbf{w}}),
\end{align*}
which implies $y^*=\bar{y}$, $p^*=\bar{p}$, $s^*=g(\bar{y})$, and  $v^*=g'(\bar{y})\bar{p}$. The result follows from the uniqueness of the limits.
\end{proof}

\noindent \textbf{Proof of Theorem \ref{thm: formal OS}.}
Let $j=1$. We will prove that the subsequences $\{\zeta_\gamma\}$ and $\{ \eta_\gamma \}$ are bounded in $H^{2,1}(Q)$ and $L^2(Q)$, respectively. Notice that, from the properties of $g$, the right-hand side of equation \eqref{eq:adj_penalized_b} belongs to $L^2(Q)$. Indeed,
\begin{multline*}
\|\gamma g'(y_\gamma)(v_\gamma - g'(y_\gamma)p_\gamma) - (p_\gamma-\bar{p})\|_{L^2(Q)} \leq \gamma K\| v_\gamma - g'(y_\gamma)p_\gamma\|_{L^2(Q)} +\| p_\gamma-\bar{p}\|_{L^2(Q)}\\
\leq \gamma K\left(\| v_\gamma\|_{L^2(Q)} +K\|p_\gamma\|_{L^2(Q)}\right) +\| p_\gamma-\bar{p}\|_{L^2(Q)}.
\end{multline*}
From the form of the operator $G$, the initial condition of equation  \eqref{eq:adj_penalized_b} belongs to $H_0^1(\Omega)$. Therefore, using estimate \eqref{eq3:003}, the boundedness of $\{ \zeta_\gamma \}$  in $H^{2,1}(Q)$ is verified.

To prove the boundedness of $\{ \eta_\gamma \}$, we will show that the right-hand side of equation \eqref{eq:adj_penalized_a} belongs to $H^{2,1}(Q)^*$. Since $L^2(0,T;\mathcal{M}(\Omega))\hookrightarrow L^2(0,T;W^{-1,r}(\Omega))$, it follows that $\sum_{k,i} w_k\sigma_i\rho_i(t)\zeta_\gamma(x,t)\otimes\delta(x-x_k)$ belongs to $L^2(0,T;W^{-1,r}(\Omega)) \hookrightarrow H^{2,1}(Q)^*$.
For the second term, notice that $\nabla_{y}L(y)-\gamma g'(y_\gamma)(s_\gamma - g(y_\gamma))\in L^2(Q)$. Indeed,
\begin{multline*}
\|\nabla_{y}L(y)-\gamma g'(y_\gamma)(s_\gamma - g(y_\gamma))\|_{L^2(Q)}\leq \|\nabla_{y}L(y)\|_{L^2(Q)}+\|\gamma g'(y_\gamma)(s_\gamma - g(y_\gamma))\|_{L^2(Q)}\\
\leq\|\nabla_{y}L(y)\|_{L^2(Q)}+\gamma K\|s_\gamma - g(y_\gamma)\|_{L^2(Q)},
\end{multline*}
and, since $L^2(Q)\hookrightarrow H^{2,1}(Q)^*$, the result is verified for this component as well.
For the third term, notice that since $p_\gamma \in L^2(W_0^{1,r}(\Omega)) \hookrightarrow L^2(L^3(\Omega))$, the product $p_\gamma(v_\gamma - g'(y_\gamma)p_\gamma)$ belongs to $L^1(L^{6/5}(\Omega))$ (see e.g. \cite{hytonen2016analysis}) and
\begin{equation*}
  \|p_\gamma(v_\gamma - g'(y_\gamma)p_\gamma)\|_{L^1(L^{6/5}(\Omega))} \leq \|p_\gamma \|_{L^2(L^3(\Omega))} \|v_\gamma - g'(y_\gamma)p_\gamma \|_{L^2(Q)}.
\end{equation*}
Thanks to Assumption \ref{assum:001} and since $H^{2,1}(Q) \hookrightarrow L^\infty(L^6)$, it then follows that $g''(y_\gamma)p_\gamma(v_\gamma - g'(y_\gamma)p_\gamma)\in H^{2,1}(Q)^*$ with uniform bound.

Consequently, the sequence $\{\zeta_\gamma,\eta_\gamma\}$ is bounded in $H^{2,1}(Q)\times L^2(Q)$ and there exists a subsequence, denoted the same, and a limit point $(\bar{\zeta},\bar{\eta})\in H^{2,1}(Q)\times L^2(Q)$ such that $(\zeta_\gamma,\eta_\gamma) \rightharpoonup (\bar{\zeta},\bar{\eta})$ weakly in $H^{2,1}(Q) \times L^2(Q)$.
This jointly with the convergence result of Lemma \ref{lem:convergence_penalized} allow us to pass to the limit in \eqref{eq:adj_penalized} and obtain \eqref{eq:310a} and \eqref{eq:310b}. Equation \eqref{eq:310c} follows from the form of the operator $G$.
The Karush-Kuhn-Tucker conditions for the multipliers follow in a standard manner, by passing to the limit in the inequalities \eqref{eq:gradient_pen_w}-\eqref{eq:gradient_pen_sigma}.
\qed

\section{Numerical solution of the bilevel problem}
In this section we describe the algorithmic framework utilized for the construction of the proposed quasi-Newton solution algorithm.

\subsection{Sparsity enforcing penalty function}\label{sparsity}
Due to the kind of problem we deal with, the solutions that we would like to get are binary vectors that lead to the optimal sensors' spatial location and determine when the devices should be turned on. For that purpose, we consider the following modified objective functional:
  \begin{equation*}
  \displaystyle
  \min_{0\leq \mathbf{w}\leq1}J_{\epsilon}(\mathbf{y},\mathbf{w})=\displaystyle\iint\limits_Q\sum_{j=1}^N L(y_j,y_j^\dag)dxdt+\beta\int\limits_{\Omega}\sum_{j=1}^N l(u_j, u_j^\dag)dx+\beta_w\Phi_{\epsilon}(w)+\beta_\sigma\Phi_{\epsilon}(\sigma),
  \end{equation*}
where $\Phi_{\epsilon}(.)$, $\epsilon >0$, is the family of sparsity enforcing functions proposed in \cite[p.2135]{OED}:
      \[
      \begin{array}{ll}
        \Phi_{\epsilon}(x)=\displaystyle\sum_i \varphi_{\epsilon}(x_i) \text { and } &  \varphi_{\epsilon}(x_i)=\left\{
          \begin{array}{ll}
            \frac{x_i}{\epsilon}, & \hbox{$0\leq x_i\leq\frac{1}{2}\epsilon$} \\
            \pi_{\epsilon}(x_i), & \hbox{$\frac{1}{2}\epsilon< x_i\leq 2\epsilon$} \\
            1, & \hbox{$2\epsilon<x_i \leq1$}
          \end{array}
        \right.
      \end{array}
      \]
with $\pi_{\epsilon}(.)$ a uniquely defined polynomial of third order that makes $\varphi_{\epsilon}:[0,1]\longrightarrow[0,1]$ continuously differentiable. Setting  $\pi_{\epsilon}(x)=ax^3+bx^2+cx+e$, its coefficients can be obtained for each value of $\epsilon>0$ by solving the following system:
      \begin{equation*}\label{eq:32}
      \left(
      \begin{array}{cccc}
      \frac{\epsilon^3}{8}&\frac{\epsilon^2}{4}&\frac{\epsilon}{2}&1\\
      8\epsilon^3&4\epsilon^2&2\epsilon&1\\
      \frac{3\epsilon^2}{4}&\epsilon&1&0\\
      12\epsilon^2&4\epsilon&1&0
      \end{array}
      \right) \left(
      \begin{array}{c}
      a\\b\\c\\e
      \end{array}
      \right)= \left(
      \begin{array}{c}
      \frac{1}{2}\\1\\\frac{1}{\epsilon}\\0
      \end{array}
      \right)
    \end{equation*}

    \begin{figure}[ht]
    \centering
        \begin{tikzpicture}[baseline, every node/.style={scale=0.7}]
        \begin{axis}[
        title=Penalty function different values of $\epsilon$,
        ylabel=$\Phi_{\epsilon}$,
        line width=0.6pt,
        legend entries={$\epsilon=1/2$,$\epsilon=1/4$,$\epsilon=1/8$,$\epsilon=1/16$},
        legend style={draw=none},
        legend pos=south east,
        width=10cm,height=5.5cm]
        \addplot[color=teal] table[x=iter,y=f] {f_epsi1.dat};
        \addplot[color=blue,dashed] table[x=iter,y=f] {f_epsi2.dat};
        \addplot[color=red,dashdotted] table[x=iter,y=f] {f_epsi3.dat};
        \addplot[color=violet,dotted] table[x=iter,y=f] {f_epsi4.dat};
        \end{axis}
        \end{tikzpicture}%
      \end{figure}

\noindent Since we modified the objective function, the optimality system must be altered as well. However, the only change occurs in the computation of the gradient equation \eqref{eq:gradient_w}-\eqref{eq:gradient_sigma}, which takes the following form:
\begin{subequations} \label{eq:33}
\begin{equation}\label{eq:33_a}
\nabla F_{\epsilon}(\mathbf{w})_{k}=\beta_w \varphi_{\epsilon}'(w_k)-\displaystyle\sum\limits_{j=1}^N\int\limits_0^T\sum_i\sigma_i\rho_i(t)\zeta_j(x_k,t)\left(y_j(x_k,t)-z_{oj}(x_k,t)\right)dt,
\end{equation}
 for all $k=1,\ldots,n_s$, and
\begin{equation}\label{eq:33_b}
\nabla F_{\epsilon}(\mathbf{w})_{n_s+i} =\beta_\sigma \varphi_{\epsilon}'(\sigma_i)-\displaystyle\sum\limits_{j=1}^N\int\limits_0^T\sum_k w_k\rho_i(t)\zeta_j(x_k,t)\left(y_j(x_k,t)-z_{oj}(x_k,t)\right)dt,
\end{equation}
\end{subequations}

\noindent for all $i=1\ldots,n_T$. Defining the Karush-Kuhn-Tucker multipliers $\lambda^a =\max\{0,\nabla F_{\epsilon}(\bar{\mathbf{w}})\}$ and $\lambda^b =|\min\{0,\nabla F_{\epsilon}(\bar{\mathbf{w}})\}|$, the gradient equation \eqref{eq:33} at the optimal solution, may be rewritten as $\nabla F_{\epsilon}(\bar{\mathbf{w}})-\lambda^a+\lambda^b=0$, jointly with the complementarity conditions \eqref{eq:Complementarity}.

However, since $\Phi_{\epsilon}$ is a concave function, the solution to the bilevel problem may not be unique. To deal with this issue, we proceed as in \cite{Al}, i.e., for fixed parameters $\beta_w$ and $\beta_\sigma$, we solve the problem without sparsity enforcing penalty term. Then, we take the solution vectors $w$ and $\sigma$ as the initialization vectors for the problem with the nonconvex term. In practice, once $\epsilon$ is sufficiently small, $w$ and $\sigma$ will approach to binary vectors.

\subsection{Projected quasi-Newton methods}
In general, a projection method makes use of a descent direction of a unconstrained problem of the form,
$$\min_{w \in U_{ad}} f(w),$$
and, thereafter, projects the new iterate onto the feasible set $U_{ad}$, defined by the inequality box-constraints (\cite{JK}, pp.75). Thus, the update of the iterate $w_k$ is given by $$w_{k+1}=P_{U_{ad}}(w_k+\alpha_k d_k),$$ where $P_{U_{ad}}$ stands for the projection onto $U_{ad}$, $d_k$ is the descent direction, and $\alpha_k\in(0,1)$ a line search parameter. A modified Armijo rule is given for choosing the largest $\alpha_k$ such that
      \begin{equation}\label{eq:316}
      \displaystyle
      f\left(P_{U_{ad}}(w_k+\alpha_k d_k)\right)-f(w_k)\leq-\frac{\hat{\gamma}}{\alpha_k}\| P_{U_{ad}}(w_k+\alpha_k d_k)-w_k\|^2,
      \end{equation}
where $\hat{\gamma}\in(0,1)$.

Since the information provided by the Hessian matrix is not enough to generate descent directions for the constrained problem (see e.g. \cite[pp.98]{Kelley}), for the second order projected methods, it is necessary to use the reduced version of it, based on estimations of $\epsilon-$active and active sets:
      \[
      A^{\epsilon}(w)=\{i:a_i\leq w_i\leq a_i+\epsilon\text{ or } b_i\geq w_i\geq b_i-\epsilon\} \text{ and } A(w)=\{i:w_i=a_i \text{ or } w_i=b_i\}.
      \]
Here, $a_i$ and $b_i$ stand for the lower and upper bounds, respectively. The complements of these sets are  $I^{\epsilon}(w)$ and $I(w)$, they represent the $\epsilon-$inactive and inactive sets, respectively.

In a general way, if $S$ represents a generic index set, $R_S$ will denote the matrix $R_S=(\delta_{ij})$ if $i\in S$ or $j\in S$, with $\delta_{ij}=1$ if $i=j$ and $\delta_{ij}=0$ otherwise. With these notations, the reduced Hessian matrix, at an iterate $w_k$, is defined as:
      \begin{equation*}
      \begin{array}{ll}
      \tilde{R}(w_k,\epsilon_k,H_k)&=R_{A^{\epsilon_k}(w_k)}+R_{I^{\epsilon_k}(w_k)} H_k R_{I^{\epsilon_k}(w_k)}\\
      &\text{\textcolor[rgb]{1.00,1.00,1.00}{espacio en blanco}}\\
      &=\left\{
      \begin{aligned}
         &d_{ij},&&\text{if } i\in A^{\epsilon_k}(w_k) \text{ or } j\in A^{\epsilon_k}(w_k), \\
         &(H_k)_{ij},&&\text{ otherwise}.
      \end{aligned}
      \right.
      \end{array}
      \end{equation*}
Using the reduced matrix, the iteration of the projected method will be given by
      \begin{equation}\label{eq:318}
       w_{k+1}=P_{U_{ad}}(w_k-\alpha_k \tilde{R}(w_k,\epsilon_k,H_k)^{-1}\nabla f(w_k)).
      \end{equation}
If $H_k$ corresponds to the exact Hessian, \eqref{eq:318} represents the iterations of the projected Newton method, which converges q-quadratically upon identification of the active set \cite[Theorem 5.5.3]{Kelley}.

Now, let $H_k$ be an approximation of the Hessian matrix provided by the BFGS method. A possible update of the second term in the approximation of $\tilde{R}(w_{k+1},\epsilon_{k+1},H_{k+1})$ is given by (see \cite[p.102]{Kelley}):
      \[
      \tilde{H}_{k+1}=R_{I^{\epsilon_k}(w_k)}H_{k}R_{I^{\epsilon_k}(w_k)}-R_{I^{\epsilon_k}(w_k)}\frac{H_k s_k s_k^T H_k}{s_k^TH_k s_k}R_{I^{\epsilon_k}(w_k)}+\frac{y_k^{\#} (y_k^{\#})^T}{s_k^T y_k^{\#} },
      \]
where $y_k^{\#}=R_{I^{\epsilon_k}(w_k)}\left(\nabla f(w_{k+1})-\nabla f (w_k) \right)$.

Similarly to the unconstrained case, there is also an update to the inverse matrix of the BFGS projected method $B_k=H_k^{-1}$, which is given by
      \begin{equation}\label{eq:322}
        \tilde{B}_{k+1}=
      \left(I-\frac{s_k^{\#} (y_k^{\#})^T}{(y_k^{\#})^Ts_k^{\#}}\right)R_{I^{\epsilon_k}(w_k)} B_k R_{I^{\epsilon_k}(w_k)} \left(I-\frac{y_k^{\#} (s_k^{\#})^T}{( y_k^{\#})^Ts_k^{\#}}\right)+\frac{s_k^{\#} (s_k^{\#})^T}{(y_k^{\#})^Ts_k^{\#}},
      \end{equation}
where $s_k^{\#}=R_{I^{\epsilon_k}(w_k)}(w_{k+1}-w_{k})$. Therefore, the descent direction is given by
      \begin{equation}\label{eq:320}
      d_{k}=-R_{A^{\epsilon_k}(w_k)} \nabla f(w_k) - \tilde{B}_{k}\nabla f(w_k).
      \end{equation}

\subsection{Algorithm}
We propose a projected BFGS algorithm to solve the optimal observation placement problem \eqref{eq: bilevel problem}. What we aim to obtain is an optimal placement vector $\mathbf{w}=(w,\sigma)$ that allows us to obtain the best average reconstruction of each training pair $\left(u_j^\dag,y_j^\dag\right)$.

The algorithmic procedure consists of two main stages. In the first one we solve the location problem without considering the sparsity enforcing penalty function. To this end, we first set the parameter values, and then solve $N$ data assimilation problems given by \eqref{eq:301b}, by means of a standard BFGS method. Next, we have to compute the solution of the corresponding $N$ adjoint systems to the bilevel problem. Notice that these steps of the algorithm can be computed in parallel, since for each $j=1,\ldots, N$ the systems are independent from each other. Afterwards, we compute the gradient of the bilevel problem, estimate the $\epsilon-$active sets, and finally, using the BFGS projected method, calculate the placement vector update.

The second stage of our algorithm consists in using as initialization parameters the location vectors obtained in the previous stage and repeat the process described above to solve the placement problem which includes the sparsity enforcing penalty function. Since the initialization vector is already close to a sparse solution, the second stage of the algorithm should take just few iterations to get the result.

Concerning the line search, in both stages, we require that the  parameter $\alpha_k$ belongs to the following set
      \[\displaystyle\left\{\frac{1}{2^i\|\nabla F(\mathbf{w_0}) \|}, i\in\{0,1,2,\ldots\}\right\},\]
where $\nabla F(\mathbf{w_0})$ represents the gradient of the given initial placement vector $\mathbf{w_0}$. We summarize the steps described above in Algorithm 1.

 \begin{table}[ht]
 \label{alg:301}
 \begin{tabular}{@{}l@{}}
 \toprule
 \textbf{Algorithm 1}          \\ \midrule
 \textbf{Inputs:} $m$, $n$, $\beta$, $\beta_w$, and  $\beta_\sigma$ (problem dimension, parameters)\\
 \hspace{0.3cm} First stage:\\
  1. Set $\mathbf{w_0}=(w_0,\sigma_0)$, and $k=0$. Compute $\nabla F(\mathbf{w_0})$.\\
  2. \textbf{ Repeat} \\
  3. \hspace{0.5cm}For each $j=1,\ldots,N$, find $(y_{k_j},p_{k_j},u_{k_j})$ solution of \eqref{eq:400}.\\
  4. \hspace{0.5cm}For each $j=1,\ldots,N$, find $(\eta_{k_j},\zeta_{k_j})$ solution of the system \eqref{eq:310a}-\eqref{eq:310c}.\\
  5. \hspace{0.5cm}Find $\nabla F(\mathbf{w_k})$ using \eqref{eq:gradient_w} and \eqref{eq:gradient_sigma}.\\
  6. \hspace{0.5cm}Estimate the $\epsilon-$active set of $(w_k,\sigma_k)$.\\
  7. \hspace{0.5cm}Compute $\tilde{B}_{k+1}$ according to \eqref{eq:322}.\\
  8. \hspace{0.5cm}Find a descent direction $d_{k}$ using \eqref{eq:320}.\\
  9. \hspace{0.5cm}Find $\alpha_k \in \left\{\frac{1}{2^i\|\nabla F(\mathbf{w_0}) \|},  i\in\{0,1,2,\ldots\} \right\}$ that verifies \eqref{eq:316}. \\
  10. \hspace{0.3cm}Update $w_{k+1}=P_{V_{ad}}(\mathbf{w_k}+\alpha_k d_k)$\\
  11. \hspace{0.3cm}Set $k=k+1$.\\
  12. \textbf{Until} stopping criteria \\
 \hspace{0.5cm} Second stage:\\
  13. Set $\mathbf{w_0}=(w,\sigma)$, and $k=0$. Compute $\nabla F_{\epsilon}(\mathbf{w_0})$.\\
  14. Repeat the steps but considering \eqref{eq:33_a}-\eqref{eq:33_b} instead of \eqref{eq:gradient_w}-\eqref{eq:gradient_sigma}.\\
 \textbf{Outputs:} $w$ and $\sigma$ (optimal placement vectors)\\
 \bottomrule
 \end{tabular}
\end{table}

Intuitively, if the iterations of the projected BFGS start close to a non-degenerate local minimum together with a good approximation of the Hessian, it is expected that the iterations of the method will converge q-superlinearly, as will be experimentally verified in the next section.

\section{Computational experiments}\label{num}
In this section we report on the computational results obtained by using Algorithm 1. The considered spatial domain is the unit square $\Omega=]0,1[\times]0,1[$, while the time domain is the interval $]0,T[=]0,1[$. For each $j=1,\ldots,N$, we will work with $g(y_j)=\frac{y_j}{\sqrt{y_j^2 + \varepsilon ^2 }}$, $\varepsilon =\frac{1}{10}$, as the nonlinear term in the data assimilation problem. This nonlinearity arises from a $\mathcal{C}^\infty$ regularization of the absolute value function. We consider in the right-hand side of the data assimilation dynamical system a function that vanishes in time, namely $f(x,t)=(T-t)\sin (\pi x)$. We use a finite differences discretization scheme in space and an implicit Euler method in time, with spatial and time discretization steps $h=1/(m-1)$ and $\tau=1/(n+1)$, respectively. At each iteration of the implicit scheme, we use Newton's method to solve the nonlinear system.

In  what follows, we consider $L(y_j,y_j^\dag)=(y_j-y_j^\dag)^2$ and $l(u_j,u_j^\dag)=(u_j-u_j^\dag)^2$ for all $j=1,\ldots,N$, in the loss functionals of the optimal placement problem.

The numerical experiments are divided in two sets. In Subsection \ref{num}.1, we provide a numerical study of our method considering just one element of the training set. We aim to observe how the solution vectors vary when we use different penalization parameters. Thereafter, in Subsection \ref{num}.2, we use a larger training set and focus on learning the optimal placement vectors' structure.

For all the experiments, we set $n=12$, i.e., $n+2=14$ time subintervals, with $t_i=i\tau$, for all $i=1,\ldots, n+2$, and $m=20$, yielding $m^2=400$ possible points inside the unit square where the location of a sensor is possible. The background error covariance matrix $B^{-1}$ will be set as $\alpha I$, where $I$ is the identity operator and $\alpha$ takes the value of $1\times 10^{-1}$.

Since the sparsity enforcing penalty term is just an approximation of the counting norm, which indicates the number of non-zero entries of a vector, it is still possible to get values between zero and one. In those cases, for a better classification, we divide the interval $[0,1]$ into subintervals and count the number of elements that belong to each one. For the placement vector $w$, we fix the following intervals: $\mathbf{I_1}w=]0,0.2]$, $\mathbf{I_2}w=]0.2,0.8]$ and $\mathbf{I_3}w=]0.8,1[$.  For the time subintervals placement vector $\sigma$, we consider the following subintervals: $\mathbf{I_1}\sigma=]0,0.25[$, $\mathbf{I_2}\sigma=]0.25,0.50[$, $\mathbf{I_3}\sigma=[0.5,0.75[$, and $\mathbf{I_4}\sigma=[0.75,1[$.
\subsection{Single training pair experiments}
In this first set of experiments, we compute the training state $y^\dag$ by solving the variational data assimilation problem using the initial condition $u^\dag(x,y)=\sin(2\pi x)\sin(2\pi y)$, see Figure \ref{fig:ic}.
\begin{figure}[ht]
\centering
\input{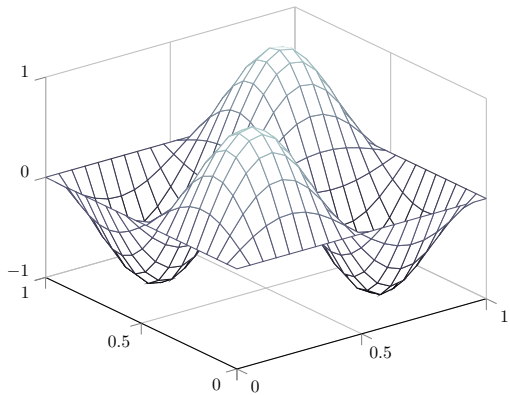}
\caption{Initial condition $u^\dag(x,y)$.}\label{fig:ic}
\end{figure}

To illustrate the behavior of the lower-level problem, for the given initial condition, we provide two snapshots of the model problem's evolution, see Figure \ref{fig:00}. Additionally, in the data assimilation problem, we add Gaussian noise to the observed state $z_o$, in order to test the robustness of the approach.
\begin{figure}[ht]
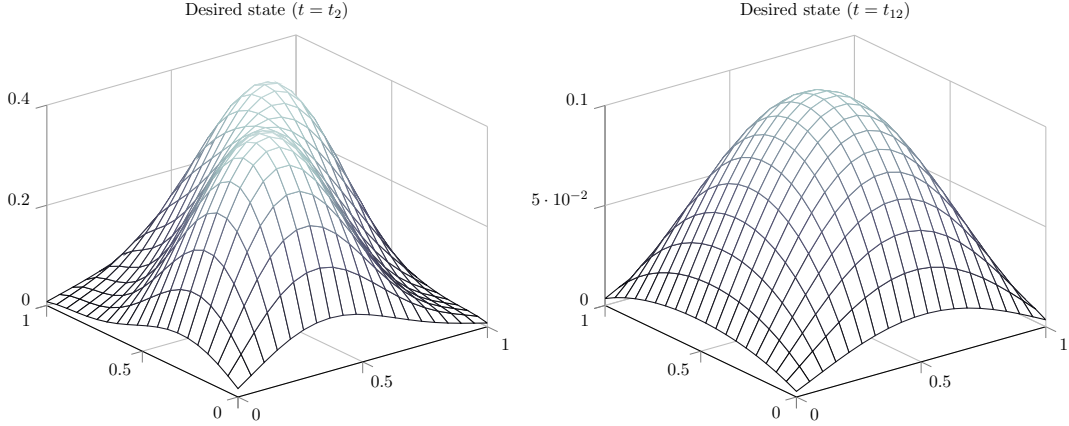

\centering
\input{y2_m20.tex}
\input{y12_m20.tex}
\caption{Snapshots of the state at times $t\approx 0.1429$ (left) and $t\approx 0.8579$ (right).}\label{fig:00}
\end{figure}

\subsubsection{Experiment 1}
The structure of the optimal placement vector, both in time and space, depends on the value of the penalization parameters $\beta_w$ (for the location vector), $\beta_\sigma$ (for the time vector), and $\beta$. The main goal in this experiment is to observe how $\mathbf{w}=(w,\sigma)$ changes its structure when we work with different values of these parameters. We perform the experiments in two stages. In the first one, we fix the parameters  $\beta_\sigma$ and $\beta$, and let $\beta_w$ be the one that changes. In the second stage, we vary the values of $\beta_\sigma$, while keeping the values of $\beta_w$ and $\beta$ fix.

\underline{First setting:} $\beta_\sigma=0$ and $\beta=0.09$, with sparsity enforcing penalty function $\Phi_{\epsilon}$ taking  $\epsilon=\frac{1}{2}$. Table \ref{tab:301} contains the information about the number of entries of the vectors $w$ and $\sigma$ that take the value zero and one. As expected, when the penalization parameter $\beta_w$ increases, the decision vector becomes sparser, with no elements between 0 and 1.

\begin{small}
\begin{table}[ht]
\centering
\begin{tabular}{@{}cccccc@{}}
\toprule
$\beta_w$ &$\#$ zeros in w &$\#$ ones in w &$\#$ zeros in $\sigma$ & $\#$ ones in $\sigma$ \\
\midrule
$1\times 10^{-5}$&0&400&0&14\\
0.0001&88&312&0&14\\
0.0003&96&304&0&14\\
0.0005&116&284&0&14\\
0.0010&156&244&0&14\\
0.0020&176&224&0&14\\
0.0030&200&200&0&14\\
0.0050&228&172&0&14\\
0.0060&236&164&0&14\\
0.0070&260&140&0&14\\
0.0072&268&132&0&14\\
0.0073&302&98&0&14\\
0.0074&334&66&0&14\\
0.0078&357&43&0&14\\
0.0079&361&39&0&14\\
0.0080&400&0&14&0\\
\bottomrule
\end{tabular}
\caption{Experiment 1. Setting 1 - Changes in $w$'s structure with different values of $\beta_w$}\label{tab:301}
\end{table}
\end{small}

In Table \ref{tab:301-1}, $\mathbf{J_0}$ represents the value of the cost functional at the beginning of the first stage of the algorithm, while  $\mathbf{J_{\text{end}}}$ denotes the value of the cost functional at the end of the second stage. Likewise, \textbf{iter} corresponds to the total number of projected quasi-Newton steps, performed in both stages of the algorithm, needed to get the location vectors. On each iteration of the upper-level problem, the algorithm solves the variational data assimilation problem, we report on \textbf{iter DA} the number of inner data assimilation iterations. The total number of forward and backward partial differential systems solved by Algorithm 1 in steps 3 and 4 is reported in the variable \textbf{PDE-solved}. We also show the values of the  location vectors' norms, which are given by $\|w\|_{\ell_1}=\sum_k w_k$ and $\|\sigma\|_{\ell_1}=\sum_i \sigma_i$.
\begin{small}
\begin{table}[ht]
\centering
\begin{tabular}{@{}ccccccccc@{}}
\toprule
$\beta_w$ &$\mathbf{J_0}$ &
$\mathbf{J_{\text{end}}}$ &$\mathbf{\| w \|}$ &$\mathbf{\| \sigma\|}$&\textbf{iter}&\textbf{iter DA}&\textbf{PDE-solved}\\ \midrule
$1\times 10^{-5}$&0.025968&0.025916&400&14&8&20&16\\
0.0001&0.061968&0.053177&312&14&7&20&14\\
0.0003&0.14197&0.11318&304&14&5&20&10\\
0.0005&0.22197&0.16398&284&14&5&20&10\\
0.0010&0.42197&0.26598&244&14&5&20&10\\
0.0020&0.82197&0.47000&224&14&5&20&10\\
0.0030&1.222&0.62201&200&14&5&20&10\\
0.0050&2.022&0.88204&172&14&5&20&10\\
0.0060&2.422&1.0060&164&14&5&20&10\\
0.0070&2.822&1.0021&140&14&5&20&10\\
0.0072&2.902&0.9725&132&14&5&20&10\\
0.0073&2.942&0.7370&98&14&15&20&30\\
0.0074&2.982&0.4924&66&14&15&20&30\\
0.0078&3.142&0.5437&43&14&8&20&16\\
0.0079&3.182&0.3621&39&14&8&20&16\\
0.0080&3.222&0.0225&0&0&15&17&30\\
\bottomrule
\end{tabular}
\caption{Experiment 1. Setting 1 - Decreasing of $\|w\|$ for different values of $\beta_w$}\label{tab:301-1}
\end{table}
\end{small}

The structure of the optimal placement vector $w$ can be visualized in Figure \ref{fig:306}, for five different increasing values of $\beta_w$. Apart from the increasing sparsity, the bilevel criterion favors the location of the observations in four well-defined spots of the domain, where the training initial condition actually plays a more relevant role. For this choice of parameters, we obtain both solution vectors $w$ and $\sigma$ as binary ones. There is no need of postprocessing.

  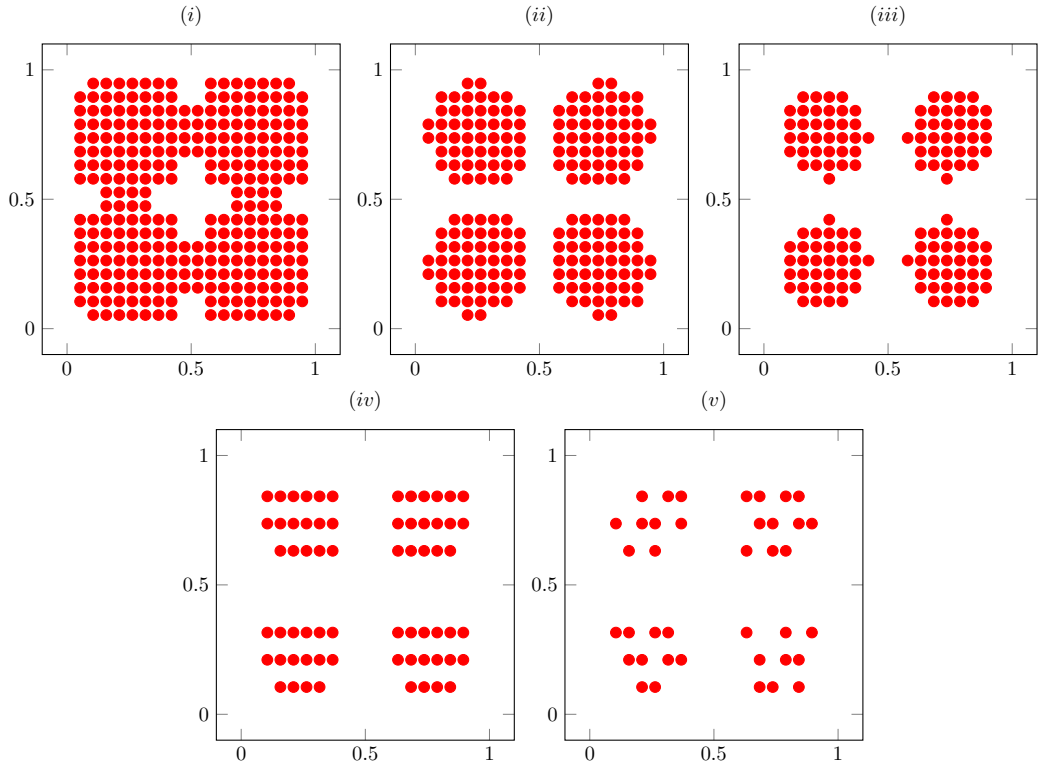
\begin{figure}[ht]
  \begin{tikzpicture}[baseline,  every node/.style={scale=0.7}]
  \begin{axis}[xmin=-0.1,xmax=1.1,ymin=-0.1,ymax=1.1,
  title=$(i)$,
  legend style={at={(0.5,-0.15)},anchor=north,legend columns=-1},width=5.5cm,height=5.7cm]
  \addplot[
  scatter, only marks, point meta=explicit symbolic, scatter/classes={
  1={mark=*,red,style={mark size=2pt}},%
  3={mark=*, blue,style={mark size=1.2pt}},
  2={mark=*,teal,style={mark size=0.7pt}},
  0={mark=*, violet,style={mark size=0.3pt}}},
  ]
  table[meta=label] {exp1_w1_m20.dat};
  \end{axis}
  \end{tikzpicture}
  \begin{tikzpicture}[baseline, every node/.style={scale=0.7}]
  \begin{axis}[xmin=-0.1,xmax=1.1,ymin=-0.1,ymax=1.1,
   title=$(ii)$,
   legend style={at={(0.5,-0.15)},anchor=north,legend columns=-1},width=5.5cm,height=5.7cm]
  \addplot[
  scatter, only marks, point meta=explicit symbolic, scatter/classes={
  1={mark=*,red,style={mark size=2pt}},%
  3={mark=*, blue,style={mark size=1.2pt}},
  2={mark=*,teal,style={mark size=0.7pt}},
  0={mark=*, violet,style={mark size=0.3pt}}},
  ]
  table[meta=label] {exp1_w2_m20.dat};
  \end{axis}
  \end{tikzpicture}
   \begin{tikzpicture}[baseline, every node/.style={scale=0.7}]
   \begin{axis}[xmin=-0.1,xmax=1.1,ymin=-0.1,ymax=1.1,
    title=$(iii)$,
    legend style={at={(0.5,-0.15)},anchor=north,legend columns=-1},width=5.5cm,height=5.7cm]
   \addplot[
   scatter, only marks, point meta=explicit symbolic, scatter/classes={
   1={mark=*,red,style={mark size=2pt}},%
   3={mark=*, blue,style={mark size=1.2pt}},
   2={mark=*,teal,style={mark size=0.7pt}},
   0={mark=*, violet,style={mark size=0.3pt}}},
   ]
   table[meta=label] {exp1_w3_m20.dat};
   \end{axis}
   \end{tikzpicture}
   \begin{tikzpicture}[baseline, every node/.style={scale=0.7}]
   \begin{axis}[xmin=-0.1,xmax=1.1,ymin=-0.1,ymax=1.1,
    title=$(iv)$,
    legend style={at={(0.5,-0.15)},anchor=north,legend columns=-1},width=5.5cm,height=5.7cm]
   \addplot[
   scatter, only marks, point meta=explicit symbolic, scatter/classes={
   1={mark=*,red,style={mark size=2pt}},%
   3={mark=*, blue,style={mark size=1.2pt}},
   2={mark=*,teal,style={mark size=0.7pt}},
   0={mark=*, violet,style={mark size=0.3pt}}},
   ]
    table[meta=label] {exp1_w5_m20.dat};
   \end{axis}
   \end{tikzpicture}
   \begin{tikzpicture}[baseline, every node/.style={scale=0.7}]
   \begin{axis}[xmin=-0.1,xmax=1.1,ymin=-0.1,ymax=1.1,
    title=$(v)$,
    legend style={at={(0.5,-0.15)},anchor=north,legend columns=-1},width=5.5cm,height=5.7cm]
   \addplot[
   scatter, only marks, point meta=explicit symbolic, scatter/classes={
   1={mark=*,red,style={mark size=2pt}},%
   3={mark=*, blue,style={mark size=1.2pt}},
   2={mark=*,teal,style={mark size=0.7pt}},
   0={mark=*, violet,style={mark size=0.3pt}}},
   ]
    table[meta=label] {exp1_w6_m20.dat};
   \end{axis}
   \end{tikzpicture}
  \caption{Optimal placement vector's structure. Different values of $\beta_w$. $(i) \beta_w=0.0005$, $(ii) \beta_w=0.0030$, $(iii) \beta_w=0.0070$, $(iv) \beta_w=0.0074$,$(v) \beta_w=0.0079$.}\label{fig:306}
  \end{figure}

\underline{Second setting}:  Choosing $m=10$, yielding $m^2=100 $ total points in the spatial domain, setting $\beta_w=0.0001$ and $\beta=0.1$, and letting $\beta_\sigma$ be the one that varies, we obtain changes in the structure of the time subintervals vector $\sigma$. For the sparsity enforcing penalty function $\Phi_\epsilon$, we choose  $\epsilon=\frac{1}{8}$. As the penalization parameter $\beta_\sigma$ increases, the structure of $\sigma$ becomes sparser. We report these changes in Table \ref{tab:303}.

\begin{small}
\begin{table}[ht]
\centering
\begin{tabular}{@{}cccccc@{}}
\toprule
$\beta_\sigma$ &$\#$ zeros in $\sigma$ & $\#$ ones in $\sigma$ &$\#$ zeros in w &$\#$ ones in w\\
\midrule
0.001&0&14&0&100\\
0.002&0&14&0&100\\
0.003&0&14&0&100\\
0.005&0&14&0&100\\
0.006&11&3&36&64\\
0.007&11&3&36&64\\
0.008&11&3&36&64\\
0.009&11&3&36&64\\
0.01 &11&3&36&64\\
0.02 &11&3&36&64\\
0.05 &14&0&100&0\\
\bottomrule
\end{tabular}
\caption{Experiment 1. Setting 2. Changes in $\sigma$'s structure with different values of $\beta_\sigma$}\label{tab:303}
\end{table}
\end{small}

In Table \ref{tab:303-1} we show the total number of iterations that the algorithm requires to reach the solution. As in the previous experiment, the norm of the time vector decreases as the penalization parameter increases. In Figure \ref{fig:305}, we can observe the structure of the time subintervals when the sensors/devices have to be turned on or turned off according to the chosen parameters. Enforcing sparsity in the time variable can be used to develop strategies about when the sensors/devices should be turned on or off. Doing this could be useful especially if the devices are energy demanding or need time to properly start to work.
\begin{small}
\begin{table}[ht]
\centering
\begin{tabular}{@{}cccccccc@{}}
\toprule
$\beta_\sigma$ &$\mathbf{J_0}$ &
$\mathbf{J_{\text{end}}}$ &$\mathbf{\| w \|}$ &$\mathbf{\| \sigma\|}$&\textbf{iter} &\textbf{iter DA}&\textbf{PDE-solved}\\ \midrule
0.001&0.043601&0.043599&100&14&6&19&12\\
0.002&0.057601&0.057592&100&14&6&19&12\\
0.003&0.071601&0.071116&100&14&23&19&46\\
0.005&0.099601&0.09786&100&14&23&19&46\\
0.006&0.1136&0.04446&64&3&18&16&36\\
0.007&0.1276&0.047456&64&3&9&16&18\\
0.008&0.1416&0.050456&64&3&9&16&18\\
0.009&0.1556&0.053456&64&3&9&16&18\\
0.01 &0.1696&0.056456&64&3&8&16&16\\
0.02 &0.3096&0.08646&64&3&8&16&16\\
0.05 &0.7296&0.17646&0&0&8&16&16\\
\bottomrule
\end{tabular}
\caption{Experiment 1 - Setting 2. Decreasing of $\|\sigma\|$ for different values of $\beta_\sigma$}\label{tab:303-1}
\end{table}
\end{small}

      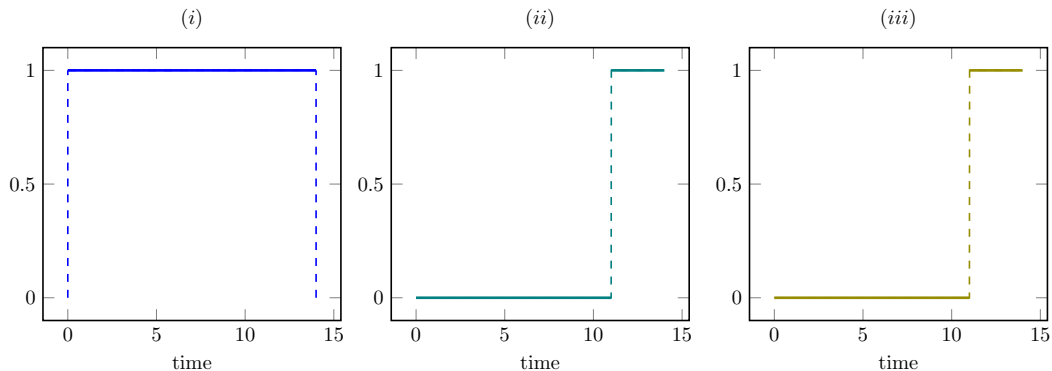
\begin{figure}[ht]
      \centering
      \begin{tikzpicture}[baseline, trim left=0.2cm,every node/.style={scale=0.7}]
      \begin{axis}[ymin=-0.1,ymax=1.1,
      title=$(i)$,
      xlabel=time,
      line width=0.6pt,
      width=5.5cm,height=5.2cm]
      \addplot[color=blue,dashed] table[x=t,y=f_t] {exp1_sigma1.dat};
      \addplot[color=blue,line width=1pt]coordinates{(0, 1)(14, 1)};
      \addplot[color=blue,dashed]coordinates{(0, 0)(0, 1)};
      \addplot[color=blue,dashed]coordinates{(14, 0)(14, 1)};
      \end{axis}
      \end{tikzpicture}%
      \begin{tikzpicture}[baseline,every node/.style={scale=0.7}]
      \begin{axis}[ymin=-0.1,ymax=1.1,
      title=$(ii)$,
      xlabel=time,
      line width=0.6pt,
      width=5.5cm,height=5.2cm]
      \addplot[color=teal,dashed] table[x=t,y=f_t] {exp1_sigma2.dat};
      \addplot[color=teal,line width=1pt]coordinates{(0, 0)(11, 0)};
      \addplot[color=teal,line width=1pt]coordinates{(11, 1)(14, 1)};
      \end{axis}
      \end{tikzpicture}
      \begin{tikzpicture}[baseline,every node/.style={scale=0.7}]
      \begin{axis}[ymin=-0.1,ymax=1.1,
      title=$(iii)$,
      xlabel=time,
      line width=0.6pt,
      width=5.5cm,height=5.2cm]
      \addplot[color=olive,dashed] table[x=t,y=f_t] {exp1_sigma2.dat};
      \addplot[color=olive,line width=1pt]coordinates{(0, 0)(11, 0)};
      \addplot[color=olive,line width=1pt]coordinates{(11, 1)(14, 1)};
      \end{axis}
      \end{tikzpicture}
      \caption{Optimal time invervals' structure. Different values of $\beta_\sigma$. $(i) \beta_\sigma=0.002$, $(ii) \beta_\sigma=0.008$, $(iii) \beta_\sigma=0.02$.}\label{fig:305}
      \end{figure}

\subsubsection{Experiment 2}
In many real situations, there are places where locating sensors could be difficult or expensive. Therefore, not all points in the spatial domain are feasible. In order to get a more realistic experiment, we consider next just a small subset of locations as feasible placements. For this experiment, we set $m=10$ yielding $m^2=100 $ total points inside the domain, of which, we consider eight specific points. Taking into account that the given points do not have to correspond to the mesh nodes, we took the closer mesh point to each of them. The considered points are:
    \begin{equation*}
    \begin{array}{lllllllllll}
    x_1&=(0.2,0.2)& \hspace{0.5cm}&x_2&=(0.5,0.4)& \hspace{0.5cm}&x_3&=(0.7,0.3)&\hspace{0.5cm}&x_4&=(0.8,0.0)\\
    x_5&=(0.8,1.0)& \hspace{0.5cm}&x_6&=(0.8,0.6)&\hspace{0.5cm}&x_7&=(0.4,0.9) &\hspace{0.5cm}&x_8&=(0.3,0.8).
    \end{array}
  \end{equation*}
Similarly to the first experiment, we are interested in observing how the structure of the placement vector $w$ and the time subintervals vector $\sigma$ change for different values of the penalization parameters. For this experiment we also set the sparsity enforcing penalty function $\Phi_{\epsilon}$ with $\epsilon=\frac{1}{8}$.

Table \ref{tab:304} shows the changes the structure of $w$ when we fix $\beta_\sigma=0$ and $\beta=0$, and let $\beta_w$ be the one that varies. Due to the small number of feasible points, the decrease of $\|w\|_{\ell_1}$ is not so agressive compared with the previous experiment. We can verify this behavior in Table \ref{tab:304-1}. Figure \ref{fig:310} shows graphically the location of these points, for different values of the penalization parameter.

      \begin{small}
      \begin{table}[ht]
      \centering
      \begin{tabular}{@{}cccccccc@{}}
      \toprule
      $\beta_w$ &$\#$ zeros in $w$ &$\mathbf{I_2}w$&$\mathbf{I_3}w$  & $\#$ ones in $w$ &$\#$ zeros in $\sigma$ & $\#$ ones in $\sigma$\\
      \midrule
$1\times10^{-5}$&0&0&0&8&0&14\\
$3\times10^{-5}$&0&0&5&3&0&14\\
$4\times10^{-5}$&0&3&2&3&0&14\\
$5\times10^{-5}$&0&5&0&3&0&14\\
$6\times10^{-5}$&0&5&0&3&0&14\\
$8\times10^{-5}$&0&5&1&2&0&14\\
$9.2\times10^{-5}$&5&1&0&2&0&14\\
$1.2\times10^{-4}$&7&0&0&1&0&14\\
$2\times10^{-4}$&8&0&0&0&14&0\\
      \bottomrule
      \end{tabular}
      \caption{Experiment 2. Changes in $w$'s structure with different values of $\beta_w$}\label{tab:304}
      \end{table}
      \end{small}

      \begin{small}
      \begin{table}[ht]
      \centering
      \begin{tabular}{@{}ccccccccc@{}}
      \toprule
      $\beta_w$ &$\mathbf{J_0}$ &
      $\mathbf{J_{\text{end}}}$ &$\mathbf{\| w \|}$ &$\mathbf{\| \sigma\|}$ &\textbf{iter} &\textbf{iter DA}&\textbf{PDE-solved}\\ \midrule
$1\times10^{-5}$&0.0033927&0.0033918&8&14&6&19&12\\
$3\times10^{-5}$&0.0035527&0.0035383&7.3243&14&6&19&12\\
$4\times10^{-5}$&0.0036327&0.0036056&6.9449&14&6&19&12\\
$5\times10^{-5}$&0.0037127&0.0036706&6.5887&14&6&19&12\\
$6\times10^{-5}$&0.0037927&0.0037351&6.3150&14&6&19&12\\
$8\times10^{-5}$&0.0039527&0.0038636&5.1545&14&6&19&12\\
$9.2\times10^{-5}$&0.0040327&0.0035329&2.3750&14&9&17&18\\
$1.2\times10^{-4}$&0.0042727&0.0034376&1&14&9&17&18\\
$2\times10^{-4}$&0.0049127&0.0033201&0&0&9&16&18\\
      \bottomrule
      \end{tabular}
      \caption{Experiment 2. Decreasing of $\|w\|$ for different values of $\beta_w$}\label{tab:304-1}
      \end{table}
      \end{small}

      \begin{figure}[ht]
      \begin{tikzpicture}[baseline, trim left=0.2cm, every node/.style={scale=0.7}]
      \begin{axis}[xmin=-0.1,xmax=1.1,ymin=-0.1,ymax=1.1,
      title=$(i)$,
      legend style={at={(0.5,-0.15)},anchor=north,legend columns=-1},width=5cm,height=5.2cm]
      \addplot[
      scatter, only marks, point meta=explicit symbolic, scatter/classes={
      1={mark=*,red,style={mark size=2pt}},%
      3={mark=*, blue,style={mark size=1.2pt}},
      2={mark=*,teal,style={mark size=0.7pt}},
      0={mark=*, violet,style={mark size=0.3pt}}},
      ]
      table[meta=label] {exp_pm_w1.dat};
      \end{axis}
      \end{tikzpicture}
      \begin{tikzpicture}[baseline, every node/.style={scale=0.7}]
      \begin{axis}[xmin=-0.1,xmax=1.1,ymin=-0.1,ymax=1.1,
      title=$(ii)$,
      legend style={at={(0.5,-0.15)},anchor=north,legend columns=-1},width=5cm,height=5.2cm]
      \addplot[
      scatter, only marks, point meta=explicit symbolic, scatter/classes={
      1={mark=*,red,style={mark size=2pt}},%
      3={mark=*, blue,style={mark size=1.2pt}},
      2={mark=*,teal,style={mark size=0.7pt}},
      0={mark=*, violet,style={mark size=0.3pt}}},
      ]
      table[meta=label] {exp_pm_w2.dat};
      \end{axis}
      \end{tikzpicture}
       \begin{tikzpicture}[baseline, every node/.style={scale=0.7}]
       \begin{axis}[xmin=-0.1,xmax=1.1,ymin=-0.1,ymax=1.1,
       title=$(iii)$,
       legend style={at={(0.5,-0.15)},anchor=north,legend columns=-1},width=5cm,height=5.2cm]
       \addplot[
       scatter, only marks, point meta=explicit symbolic, scatter/classes={
       1={mark=*,red,style={mark size=2pt}},%
       3={mark=*, blue,style={mark size=1.2pt}},
       2={mark=*,teal,style={mark size=0.7pt}},
       0={mark=*, violet,style={mark size=0.3pt}}},
       ]
       table[meta=label] {exp_pm_w3.dat};
       \end{axis}
       \end{tikzpicture}
      \begin{tikzpicture}[baseline, every node/.style={scale=0.7}]
      \begin{axis}[xmin=-0.1,xmax=1.1,ymin=-0.1,ymax=1.1,
      title=$(vi)$,
      legend style={at={(0.5,-0.15)},anchor=north,legend columns=-1},width=5cm,height=5.2cm]
      \addplot[
      scatter, only marks, point meta=explicit symbolic, scatter/classes={
      1={mark=*,red,style={mark size=2pt}},%
      3={mark=*, blue,style={mark size=1.2pt}},
      2={mark=*,teal,style={mark size=0.7pt}},
      0={mark=*, violet,style={mark size=0.3pt}}},
      ]
      table[meta=label] {exp_pm_w4.dat};
      \legend {{$1's$},{$I_3$},{$I_2$}},
      \end{axis}
      \end{tikzpicture}
       \begin{tikzpicture}[baseline, every node/.style={scale=0.7}]
       \begin{axis}[xmin=-0.1,xmax=1.1,ymin=-0.1,ymax=1.1,
       title=$(v)$,
       legend style={at={(0.5,-0.15)},anchor=north,legend columns=-1},width=5cm,height=5.2cm]
       \addplot[
       scatter, only marks, point meta=explicit symbolic, scatter/classes={
       1={mark=*,red,style={mark size=2pt}},%
       3={mark=*, blue,style={mark size=1.2pt}},
       2={mark=*,teal,style={mark size=0.7pt}},
       0={mark=*, violet,style={mark size=0.3pt}}},
       ]
       table[meta=label] {exp_pm_w5.dat};
       \legend {{$1's$},{$I_3$},{$I_2$}},
       \end{axis}
       \end{tikzpicture}
      \caption{Optimal placement vector's structure. Given points on the mesh. $(i) \beta_w=0.0001$, $(ii) \beta_w=0.0005$, $(iii) \beta_w=0.0008$, $(iv) \beta_w=0.00092$, $(v) \beta_w=0.00012$.}\label{fig:310}
      \end{figure}
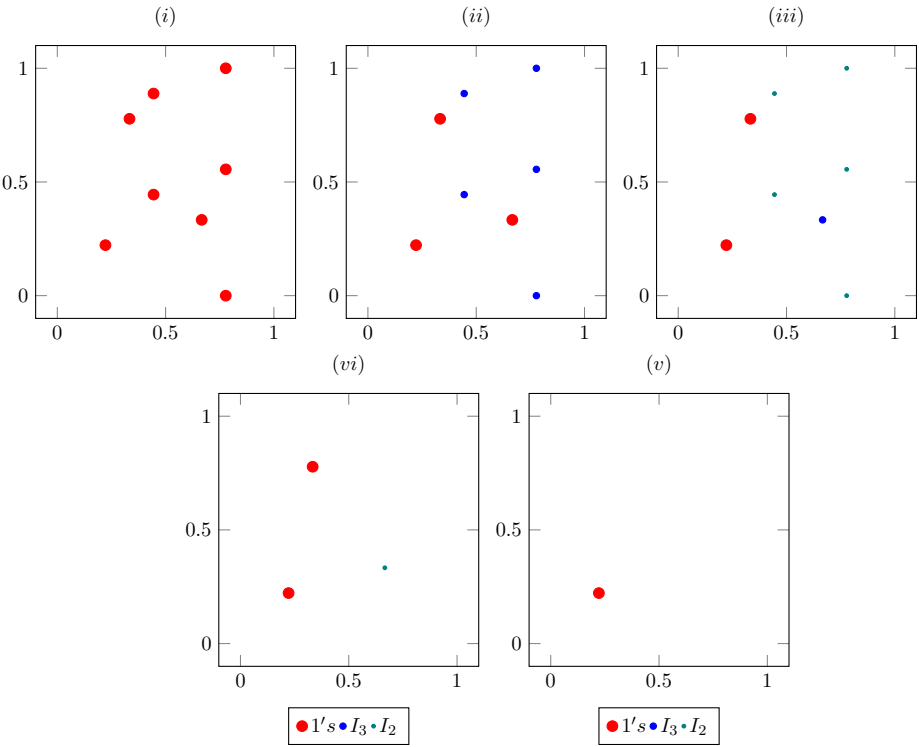

\subsection{Multiple training pairs}
The second set of experiments considers a training data set constituted by pairs, $\left(u_j^\dag, y_j^\dag\right), ~j=1, \dots, 9$. Given the initial condition $u^\dag$,  the state $y^\dag$ is obtained through simulation of the system model. Therefore, to form the training set, we just have to fix $u_j^\dag$. We build a training data set aiming for the reconstruction of a specific initial condition, (see e.g. \cite[Section 5.2]{ruthotto2018optimal}). We take the initial condition as the one given in the first set of experiments, i.e., $u^\dag(x,y)=\sin(2\pi x)\sin(2\pi y)$. Our training set consists in $N=9$ different functions that preserve some features of $u^\dag(x,y)$, and at the same time, add some modifications, e.g., translations of the maximum and minimum values, see Figure \ref{fig:training_set}. In practice, we do not have the exact initial condition that we want to rebuild. However, the background information available can be used to form a training data set.

\begin{figure}[ht]
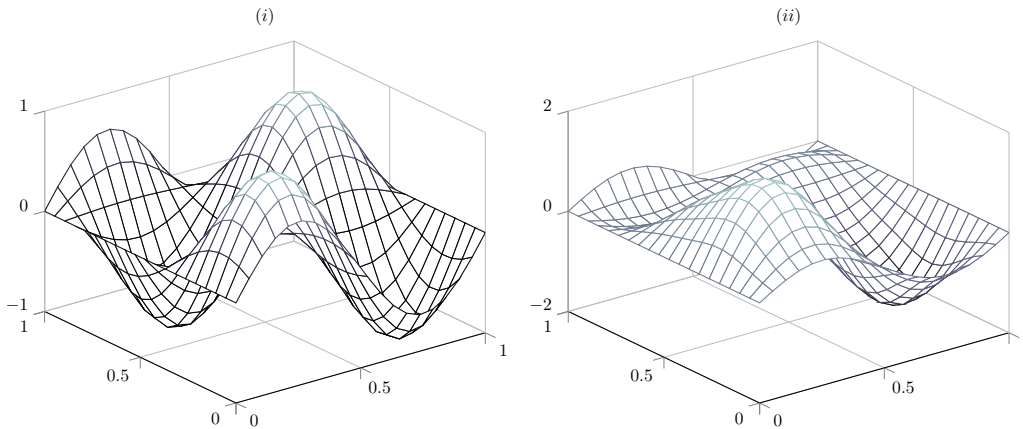

\centering
\input{g3_m20.tex}
\input{g8_m20.tex}
\caption{Training initial conditions. $(i)~ u_3^\dag(x,y)$,  $(ii)~ u_8^\dag(x,y)$}\label{fig:training_set}
\end{figure}

In meteorology, the initial condition is affected by the topography of the region of interest. Numerically we can model these perturbations as discontinuities. This modification consists in adding a jump function to each one of the $N$ elements in the training data set.

\subsubsection{Experiment 3}
For this experiment, we vary the parameter $\beta_w$ of the placement vector, while $\beta_\sigma=0.001$ and $\beta=0.001$ remain fix. We consider a sparsity enforcing penalty function $\Phi_\epsilon$ with $\epsilon=\frac{1}{8}$. In this case, the location vector $\mathbf{w}=(w,\sigma)$ has to be in average the optimal one for each training pair considered. Table \ref{tab:305} shows the structure of the resulting location vectors $w$ and $\sigma$. As in the first experiment, the value of $\|w\|_{\ell_1}$ decreases as the penalization parameter increases. We can observe this behavior, as well as the number of iterations in Table \ref{tab:305-1}.

      \begin{small}
      \begin{table}[ht]
      \centering
      \begin{tabular}{@{}cccccc@{}}
      \toprule
      $\beta_w$ &$\#$ zeros in $w$ &$\mathbf{I_2}w$ &$\#$ ones in $w$ &$\#$ zeros in $\sigma$ &$\#$ ones in $\sigma$\\
      \midrule
      0.0001&0&0&400&0&14\\
      0.001&4&0&396&0&14\\
      0.002&16&1&383&0&14\\
      0.004&77&0&323&0&14\\
      0.006&140&3&257&0&14\\
      0.008&173&0&227&0&14\\
      0.009&189&0&211&0&14\\
      0.010&204&0&196&0&14\\
      0.012&225&0&175&0&14\\
      0.014&244&0&156&0&14\\
      0.015&281&0&119&0&14\\
      0.016&321&0&79&0&14\\
      0.017&347&0&53&0&14\\
      0.018&359&0&41&0&14\\
      0.02&400&0&0&14&0\\
      \bottomrule
      \end{tabular}
      \caption{Experiment 3. Changes in $\mathbf{w}$'s structure for different values of $\beta_w$}\label{tab:305}
      \end{table}
      \end{small}

      \begin{small}
      \begin{table}[ht]
      \centering
      \begin{tabular}{@{}ccccccccc@{}}
      \toprule
      $\beta_w$ &$\mathbf{J_0}$ &
      $\mathbf{J_{\text{end}}}$&$\mathbf{\| w \|}$ &$\mathbf{\| \sigma\|}$ &\textbf{iter} &\textbf{iter DA}&\textbf{PDE-solved}\\ \midrule
      0.0001&0.17296&0.17296&400&14&6&21&108\\
      0.001&0.53296&0.52897&396&14&9&21&162\\
      0.002&0.93296&0.90068&383.487&14&13&21&234\\
      0.004&1.733&1.4253&323&14&9&21&162\\
      0.006&2.533&1.6865&257.899&14&13&21&234\\
      0.008&3.333&1.9506&227&14&8&21&144\\
      0.009&3.733&2.034&211&14&8&21&144\\
      0.010&4.133&2.0953&196&14&8&21&144\\
      0.012&4.934&2.2369&175&14&5&20&90\\
      0.014&5.734&2.3215&156&14&5&20&90\\
      0.015&6.134&2.4775&119&14&6&20&108\\
      0.016&6.534&2.0606&79&14&8&20&144\\
      0.017&6.934&1.4971&53&14&8&20&144\\
      0.018&7.334&1.0974&41&14&9&20&162\\
      0.02&8.133&0.1325&0&0&10&17&180\\
      \bottomrule
      \end{tabular}
      \caption{Experiment 3. Decreasing of $\|w\|$ for different values of $\beta_w$}\label{tab:305-1}
      \end{table}
      \end{small}

There are some cases where the obtained location vector has nonbinary weights. This result is not unexpected since the minimization problem, that includes the sparsity enforcing penalty function, is still an approximation of the mixed integer nonlinear problem. Since these nonbinary weights are very few (see Table \ref{tab:305-1}, $\beta_w=0.002$ and $\beta_w=0.006$), we decide if they correspond to 0 or 1 by using an exhaustive search. We show this results for these values of $\beta_w$ in Table \ref{tab:305_binary}. The graphical representation of the resulting location vectors with different values of $\beta_w$ is presented in Figure \ref{fig:311}. Alternatively, it is possible to use  thresholding techniques to ensure binary weights.

\begin{small}
\begin{table}[ht]
\centering
\begin{tabular}{@{}cccccc@{}}
\toprule
$\beta_w$ &N$^{\circ}$ null elements& $\mathbf{I_2}w$&N$^{\circ}$ elements&N$^{\circ}$ null elements& N$^{\circ}$ elements\\
&in $w$ & & one in $w$ & in $\sigma$ & one in $\sigma$\\
\midrule
0.002&16&0&384&0&14\\
0.006&143&0&257&0&14\\
\bottomrule
\end{tabular}
\caption{Experiment 3. Recovering the binary structure of $w$}\label{tab:305_binary}
\end{table}
\end{small}

      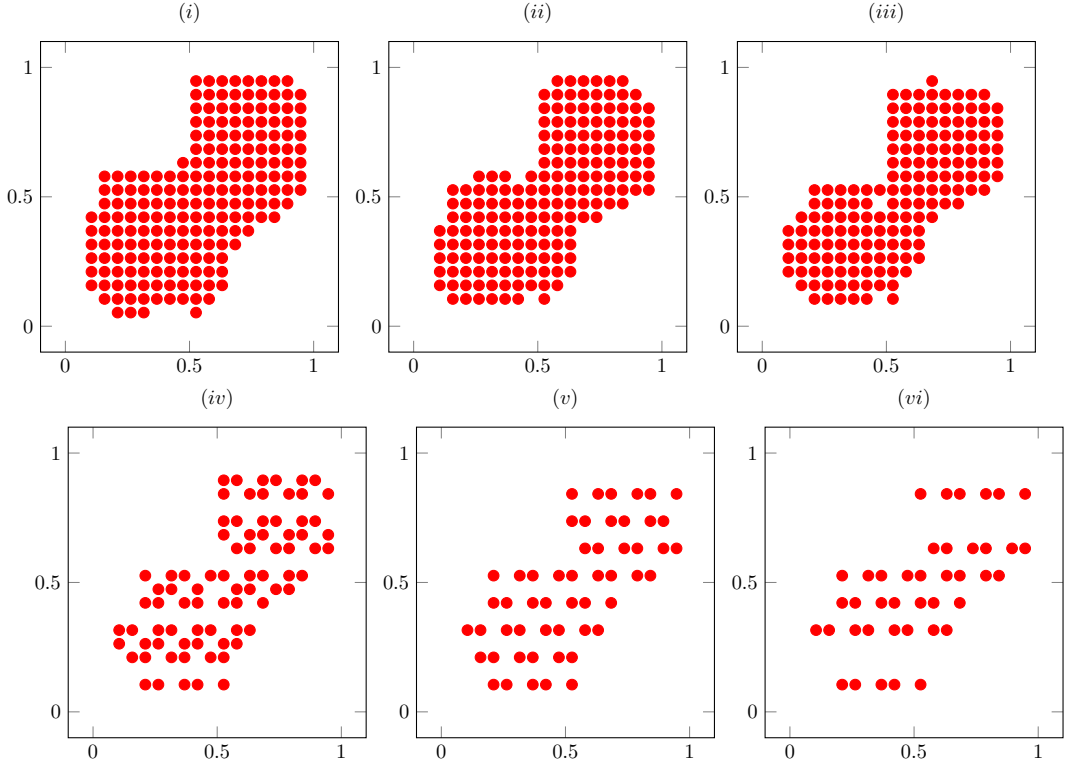
\begin{figure}[ht]
      \begin{tikzpicture}[baseline, trim left=0.2cm, every node/.style={scale=0.7}]
      \begin{axis}[xmin=-0.1,xmax=1.1,ymin=-0.1,ymax=1.1,
      title=$(i)$,
      legend style={at={(0.5,-0.15)},anchor=north,legend columns=-1},width=5.5cm,height=5.7cm]
      \addplot[
      scatter, only marks, point meta=explicit symbolic, scatter/classes={
      1={mark=*,red,style={mark size=2pt}},%
      3={mark=*, blue,style={mark size=1pt}},%
      2={mark=*,teal,style={mark size=0.5pt}},%
      0={mark=*, violet,style={mark size=0.1pt}}},%
      ]
      table[meta=label] {ts_w5_m20.dat};
      \end{axis}
      \end{tikzpicture}
      \begin{tikzpicture}[baseline, every node/.style={scale=0.7}]
      \begin{axis}[xmin=-0.1,xmax=1.1,ymin=-0.1,ymax=1.1,
      title=$(ii)$,
      legend style={at={(0.5,-0.15)},anchor=north,legend columns=-1},width=5.5cm,height=5.7cm]
      \addplot[
      scatter, only marks, point meta=explicit symbolic, scatter/classes={
      1={mark=*,red,style={mark size=2pt}},%
      3={mark=*, blue,style={mark size=1pt}},%
      2={mark=*,teal,style={mark size=0.5pt}},%
      0={mark=*, violet,style={mark size=0.1pt}}},%
      ]
      table[meta=label] {ts_w6_m20.dat};
      \end{axis}
      \end{tikzpicture}
      \begin{tikzpicture}[baseline, every node/.style={scale=0.7}]
      \begin{axis}[xmin=-0.1,xmax=1.1,ymin=-0.1,ymax=1.1,
      title=$(iii)$,
      legend style={at={(0.5,-0.15)},anchor=north,legend columns=-1},width=5.5cm,height=5.7cm]
      \addplot[
      scatter, only marks, point meta=explicit symbolic, scatter/classes={
      1={mark=*,red,style={mark size=2pt}},%
      3={mark=*, blue,style={mark size=1pt}},%
      2={mark=*,teal,style={mark size=0.5pt}},%
      0={mark=*, violet,style={mark size=0.1pt}}},%
      ]
      table[meta=label] {ts_w7_m20.dat};
      \end{axis}
      \end{tikzpicture}
      \begin{tikzpicture}[baseline, trim left=0.2cm, every node/.style={scale=0.7}]
      \begin{axis}[xmin=-0.1,xmax=1.1,ymin=-0.1,ymax=1.1,
      title=$(iv)$,
      legend style={at={(0.5,-0.15)},anchor=north,legend columns=-1},width=5.5cm,height=5.7cm]
      \addplot[
      scatter, only marks, point meta=explicit symbolic, scatter/classes={
      1={mark=*,red,style={mark size=2pt}},%
      3={mark=*, blue,style={mark size=1pt}},%
      2={mark=*,teal,style={mark size=0.5pt}},%
      0={mark=*, violet,style={mark size=0.1pt}}},%
      ]
       table[meta=label] {ts_w9_m20.dat};
      \end{axis}
      \end{tikzpicture}
      \begin{tikzpicture}[baseline, every node/.style={scale=0.7}]
      \begin{axis}[xmin=-0.1,xmax=1.1,ymin=-0.1,ymax=1.1,
      title=$(v)$,
      legend style={at={(0.5,-0.15)},anchor=north,legend columns=-1},width=5.5cm,height=5.7cm]
      \addplot[
      scatter, only marks, point meta=explicit symbolic, scatter/classes={
      1={mark=*,red,style={mark size=2pt}},%
      3={mark=*, blue,style={mark size=1pt}},%
      2={mark=*,teal,style={mark size=0.5pt}},%
      0={mark=*, violet,style={mark size=0.1pt}}},%
      ]
      table[meta=label] {ts_w10_m20.dat};
      \end{axis}
      \end{tikzpicture}
      \begin{tikzpicture}[baseline, every node/.style={scale=0.7}]
      \begin{axis}[xmin=-0.1,xmax=1.1,ymin=-0.1,ymax=1.1,
      title=$(vi)$,
      legend style={at={(0.5,-0.15)},anchor=north,legend columns=-1},width=5.5cm,height=5.7cm]
      \addplot[
      scatter, only marks, point meta=explicit symbolic, scatter/classes={
      1={mark=*,red,style={mark size=2pt}},%
      3={mark=*, blue,style={mark size=1pt}},%
      2={mark=*,teal,style={mark size=0.5pt}},%
      0={mark=*, violet,style={mark size=0.1pt}}},%
      ]
      table[meta=label] {ts_w11_m20.dat};
      \end{axis}
      \end{tikzpicture}
      \caption{Optimal placement vector's structure. Different values of $\beta_w$. $(i)\beta_w=0.010$, $(ii) \beta_w=0.012$, $(iii) \beta_w=0.014$, $(iv) \beta_w=0.016$, $(v)\beta_w=0.017$, $(vi) \beta_w=0.018$.}\label{fig:311}
      \end{figure}

  \subsection{Error in the reconstruction of the desired state}
The aim of this last experiment is computing the error between the desired state and the one obtained by using the rebuilt initial condition. To do that we form the training set as follows: We consider the same initial condition for every element in the training set, i.e., $u^\dag_j(x,y)=u^\dag(x,y)$ for all $j=1,\ldots, N$. As in the previous experiment, $y^\dag_j$ is obtained through simulations of the system model, however, in this case, we consider different right-hand sides in the dynamical system of the data assimilation problems. Moreover, for each $j=1,\ldots, N$, the observed state $z_{oj}$ is built by adding Gaussian noise with mean zero and different values of standard deviation. Doing this let us simulate the covariance matrix present in the model.

  \subsubsection{Experiment 4}
For this experiment, every element in the traing set has the form  $u^\dag(x,y)=\sin(2\pi x)\sin(2\pi y)$. We work with different values of standard deviation (SD) in the observed state while keeping fix the values of the penalization parameters $\beta_\sigma=1\times 10^{-6}$ and $\beta=0$. The absolute and relative errors between the desired and the obtained states are compute in the following way:
\[
\text{error.abs}=\frac{1}{N}\sum_{j=1}^N\|y^\dag_j - y_j\|_{L^2(Q)},\qquad \text{error.rel}=\frac{\sum_{j=1}^N\|y^\dag_j - y_j\|_{L^2(Q)}}{\sum_{j=1}^N\| y^\dag_j\|_{L^2(Q)}}
\]
We report the absolute and the relative errors obtained with different noise levels in Table \ref{tab:405}.

  \begin{small}
  \begin{table}[ht]
  \centering
  \begin{tabular}{@{}ccccc@{}}
  \toprule
  $\beta_w$ &SD &
  error ab.&error rel. \\ \midrule
   0.01&\multirow{3}{*}{0.001}&$3.776\times10^{-4}$&0.03319\\
  0.05&&$3.746\times10^{-4}$&0.03293\\
  0.1&&$3.797\times10^{-4}$&0.03338\\\midrule
  0.01&\multirow{3}{*}{0.01}&$1.868\times10^{-3}$&0.16437\\
  0.05&&$1.866\times10^{-3}$&0.16404\\
  0.1&&$1.875\times10^{-3}$&0.16495\\
  \bottomrule
  \end{tabular}
  \caption{Experiment 4. Error between the desired and the obtained states}\label{tab:405}
  \end{table}
  \end{small}

As expected, using larger values of standard deviation in the noise added to the observed state $z_{oj}$, for all $j=1,\ldots,N$, leads to larger values of the absolute and relative errors.

\section{Conclusions}
In this paper, we propose a bilevel learning approach for observation placement in variational data assimilation. The solution of our bilevel optimization problem consists of two location vectors both in space and time. The first one provides the optimal configuration where the sensors/devices have to be placed, while the second one gives the optimal time subintervals at which the sensors have to be turned on.

We divide the study of the problem into two stages. The first one focuses on the analysis of a semilinear data assimilation problem, whose adjoint equation contains regular measures in space and mollified Dirac measures in time. We show the existence of a very weak solution of the adjoint state as well as useful estimates in terms of the solution vectors.

Thereafter, we use the optimality condition of the data assimilation problem as constraint of the optimal location problem and consider a supervised learning approach in which we presuppose the existence of a training set of initial conditions and their corresponding states. A first-order optimality system for the bilevel problem is then derived by considering the constraint as a multi-state system of partial differential equations, for which an adapted penalty approach is investigated.

Numerically, both the lower- and the upper-level problems are solved by using second-order methods. Specifically, the upper-level problem is solved by using a projected BFGS method where the approximation of the inverse of the reduced Hessian matrix is built iteratively by using the estimation of $\epsilon-$active sets. We also show that the proposed projected algorithm preserves the superlinear convergence rate.

The performed experiments provide the structure of the locations vectors, showing that the solution becomes sparser as the penalization parameters increases. In some applications, one may wish to take into account economic factors and non-viable places to locate sensors or devices. We consider these scenarios in some experiments to get more realistic results.



\bibliographystyle{plain}
\bibliography{referencias}

\end{document}